 \theoremstyle{plain}
 \newtheorem{theo}{Theorem}[section]
 \newtheorem{lem}[theo]{Lemma}
 \newtheorem{corol}[theo]{Corollary}
 \newtheorem{prop}[theo]{Proposition}
 \newtheorem{aff}[theo]{Assertion}
\theoremstyle{definition}
 \newtheorem*{defi}{Definition}
 \newtheorem*{rem}{Remark}
\def\er{\operatorname{rad}}
\def\ab{\operatorname{ab}}
\def\tr{\operatorname{triv}}
\let\bord\partial
\let\bydef\emph
\let\epsi\varepsilon
\def\Rr{\mathbf{R}}
\newcommand{\ZZ} {{\mathbb{Z}}}
\newcommand{{\II}}{{\mathbb{II}}}
\def\inj{\mathop{\rm inj}\nolimits}
\def\diam{\mathop{\rm diam}\nolimits}
\def\Ric{\mathop{\rm Ric}\nolimits}
\def\Rm{\mathop{\rm Rm}\nolimits}
\def\vol{\mathop{\rm vol}\nolimits}
\def\var{manifold}
\def\irr{ir\-re\-du\-ci\-ble}
\def\er{\operatorname{rad}}
\def\ab{\operatorname{ab}}
\begin{document}

\title{Weak collapsing and geometrisation of aspherical 3-manifolds}
\author{L.~Bessi\`eres, G.~Besson, M.~Boileau, S.~Maillot, J.~Porti}
\date{\today}

\maketitle

\abstract{Let $M$ be a closed, orientable, irreducible, non-simply connected 3-manifold.
We prove that if $M$ admits a sequence of Riemannian metrics whose sectional curvature is locally controlled and whose thick part becomes asymptotically hyperbolic and has a sufficiently small volume, then $M$ is Seifert fibred or contains an incompressible torus. This result gives an alternative approach for the last step in Perelman's proof of the Geometrisation Conjecture for aspherical 3-manifolds.

\section*{Introduction}

Thurston's Geometrisation Conjecture states that  any closed, o\-rien\-ta\-ble, irreducible 3-dimensional manifold $M$  is hyperbolic, Seifert fibred, or contains an incompressible torus.
This conjecture has been proved recently by G. Perelman~\cite{PerelmanI,PerelmanII,PerelmanIII}
(see 
also~\cite{KleinerLott,MorganTian,CaoZhu}) using R.~Hamilton's Ricci flow. In this paper, we shall be concerned with the case
where $\pi_1 M$ is nontrivial. Our results apply in particular if $\pi_1M$ is infinite, which under
the above hypotheses is equivalent to $M$ being aspherical.

The last step of Perelman's proof in this case relies on a `collapsing theorem' which is independent of the Ricci
flow part. This result is stated without proof as Theorem~7.4 in~\cite{PerelmanII}. A version of this theorem for closed $3$-manifolds
is given in the appendix of \cite{ShioyaYamaguchi} using deep
results  of Alexandrov space theory, including Perelman's stability theorem (see \cite{Kapovitch}) and a fibration theorem for Alexandrov spaces~\cite{Yamaguchi}.

Our main result, Theorem~\ref{TH:GRAPH} below, implies Theorem\ 7.4 of~\cite{PerelmanII} for closed,
irreducible $3$-manifolds which are not simply connected, and is sufficient to apply Perelman's construction of Ricci flow with surgery to geometrise these manifolds. 
The proof of Theorem~\ref{TH:GRAPH} combines arguments from Riemannian geometry, algebraic topology, and $3$-manifold
theory. It uses Thurston's hyperbolisation theorem for Haken manifolds, but avoids the stability and fibration theorems for Alexandrov spaces.

In this text we call  \bydef{hyperbolic manifold} a complete $3$-manifold with constant sectional 
curvature equal to $-1$ and \emph{finite volume.} The hyperbolic metric, which is unique up to isometry by Mostow rigidity, will be denoted by $g_{hyp}$.

In the next two definitions, $M$ is a $3$-manifold.
\begin{defi}
Let $g$ be a Riemannian metric on $M$ and $\varepsilon>0$ be a real number. A point $x\in M$
is \bydef{$\varepsilon$-thin with respect to $g$} if  there exists $0<\rho\leq 1$ such that on the ball  $B(x,\rho)$, the sectional curvature is greater than or equal to $-\rho^{-2}$ and the volume of this ball is less than $\varepsilon\, \rho^3$.
Otherwise we say that $x$ is \bydef{$\varepsilon$-thick with respect to $g$}. The set of $\varepsilon$-thin points (resp.~$\varepsilon$-thick points) is called the \bydef{$\varepsilon$-thin part} (resp.~\bydef{$\varepsilon$-thick part}) of $M$.
\end{defi}

The following is a technical condition which guarantees the regularity of certain
limits of riemannian manifolds.
\begin{defi}
Let $g_n$ be a sequence of Riemannian metrics on $M$. We say that
$g_n$ has  \bydef{locally controlled curvature in the sense of
Perelman} if it has the following property: for all
$\varepsilon>0$ there exist $\overline
r(\varepsilon)>0,K_0(\varepsilon),K_1(\varepsilon)>0$, such that for
$n$ large enough, if $0<r\leq \bar r(\epsilon)$, $x\in (M,g_n)$ satisfies
 $\vol (B(x,r))/r^3\ge \varepsilon $, and the sectional curvature on 
 $B(x,r)$ is greater than or equal to $-r^{-2}$, then $\vert \Rm (x) \vert < K_0
r^{-2}$ and $\vert \nabla \Rm (x) \vert < K_1 r^{-3}$.
\end{defi}

Next we define a topological invariant $V_0(M)$  which is essential to this paper.
Let $M$ be a closed $3$-manifold.
For us, a \bydef{link} in $M$ is a (possibly empty, possibly disconnected) closed $1$-submanifold
of $M$. A link is \bydef{hyperbolic} if its complement is a hyperbolic $3$-manifold.
The invariant $V_0(M)$ is defined as the infimum of the volumes of all hyperbolic links in $M$. This
quantity is finite because any closed $3$-manifold contains a hyperbolic link~\cite{Myers}. Since the set of volumes of hyperbolic $3$-manifolds is well-ordered, this
infimum is always realised by some hyperbolic $3$-manifold $H_0$; in particular, it is positive. We note
that $M$ is hyperbolic if and only if  $H_0=M$;  indeed, every hyperbolic Dehn filling on a hyperbolic
manifold strictly decreases the volume \cite{Bessieres}. 

We now state the main result of this article:

\pagebreak
\begin{theo}\label{TH:GRAPH}
Let $M$ be a non-simply connected, closed, orientable, irreducible $3$-manifold.
Suppose that there exists a sequence $g_n$ of Riemannian metrics on $M$ satisfying:
\begin{itemize}
\item[(1)]  The sequence  $\vol(g_n)$ is bounded.
\item[(2)]  Let $\varepsilon>0$ be a real number and $x_n\in M$ be a sequence such that
for all $n$, $x_n$ is $\varepsilon$-thick with respect to $g_n$. Then the sequence of pointed manifolds  $(M,g_n,x_n)$ has a subsequence that
converges in the $\mathcal C^2$ topology towards a hyperbolic pointed manifold with volume strictly less than $V_0(M)$.
\item[(3)]  The sequence $g_n$ has locally controlled curvature in the sense of Perelman.
\end{itemize}
Then $M$ is a graph manifold or contains an incompressible torus.
\end{theo}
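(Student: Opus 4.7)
The plan is to extract a finite-volume hyperbolic limit from the $\varepsilon$-thick parts, identify the $\varepsilon$-thin complement as a graph manifold with torus boundary, and then use the minimality of $V_0(M)$ to force an incompressible torus to appear whenever $M$ is not already a graph manifold. To set up the limit, choose $\varepsilon_n\downarrow 0$ decreasing sufficiently slowly. For any basepoint $x_n$ that is $\varepsilon$-thick for some fixed $\varepsilon>0$, hypothesis~(2) combined with the curvature control~(3) yields $\mathcal{C}^2$ subconvergence of $(M,g_n,x_n)$ to a pointed hyperbolic manifold of volume strictly less than $V_0(M)$. A diagonal/exhaustion argument applied to a countable dense collection of thick basepoints, using the uniform volume bound~(1), produces a (possibly empty, possibly disconnected) complete finite-volume hyperbolic $3$-manifold $H_\infty$ with $\vol(H_\infty)=:V_\infty<V_0(M)$, together with almost-isometric embeddings $\phi_n\colon H_\infty^{(\tau_n)}\hookrightarrow(M,g_n)$ of cusp-truncations at depths $\tau_n\to+\infty$.

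The complement $W_n=M\setminus \phi_n(\Int H_\infty^{(\tau_n)})$ has boundary consisting of tori $T^1_n,\ldots,T^k_n$ modelled on the cusp cross-sections of $H_\infty$, and its interior is asymptotically $\varepsilon_n$-thin. Using the locally controlled curvature, one must show that for $n$ large $W_n$ admits a graph manifold structure compatible with $\partial W_n$; this is the Riemannian-geometric core of the paper and replaces, in the closed $3$-manifold setting, the Alexandrov fibration theorem of~\cite{Yamaguchi} used in~\cite{ShioyaYamaguchi}. With this structural information in hand, apply the topological dichotomy. If $H_\infty=\emptyset$, then $W_n=M$ is a graph manifold and we are done. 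Otherwise, if some $T^j_n$ is incompressible in $M$, we are also done. If every $T^j_n$ is compressible, then since cusps of $H_\infty$ are incompressible on the hyperbolic side, each must compress inside $W_n$; by irreducibility of $M$ combined with the graph structure on $W_n$, such a torus bounds a solid torus in $W_n$. Then $M$ is obtained from $H_\infty$ by Dehn filling, i.e.\ $H_\infty=M\setminus L$ for a link $L\subset M$ formed by the cores of these solid tori. This $L$ is a hyperbolic link in $M$, and $\vol(M\setminus L)=V_\infty<V_0(M)$ contradicts the definition of $V_0(M)$.

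The principal obstacle is the collapsing step itself: extracting a graph manifold structure on $W_n$ with controlled torus boundary, purely from Perelman's local curvature condition, without invoking Alexandrov stability or fibration theorems. One expects to proceed by building local Riemannian fibred structures (circle or torus fibres over low-dimensional bases) from the curvature control and patching them topologically into Seifert pieces, with the interface tori $T^j_n$ serving as preferred boundary components of the construction. A secondary subtlety is verifying that a compressible boundary torus in the graph piece $W_n$ actually bounds a \emph{solid} torus rather than a more general compression body; this invokes irreducibility of $M$ together with the Seifert structure of the pieces and standard incompressible-surface theory. A final delicate point is the passage to the link $L$: one needs the Dehn-filled picture to be geometrically faithful (i.e.\ $H_\infty$ really realised as $M\setminus L$), which requires coherence of the almost-isometries $\phi_n$ up to isotopy across the collapsing region.
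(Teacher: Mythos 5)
Your strategy and the paper's part ways at the most important step, and the divergence hides a genuine gap. You explicitly leave as a black box the extraction of a graph-manifold structure on the collapsing complement, writing that this ``replaces, in the closed $3$-manifold setting, the Alexandrov fibration theorem of~\cite{Yamaguchi}'' and that ``one expects to proceed by building local Riemannian fibred structures\ldots and patching them topologically into Seifert pieces.'' But the paper never proves such a fibration theorem, and in fact advertises avoiding it as its main point. What it does instead is a \emph{covering argument}: the thin region is covered by local models close to soul neighbourhoods in nonnegatively curved manifolds, whose fundamental groups have virtually abelian (hence amenable) images; this covering is pushed to dimension at most~$2$ (Lemmas~\ref{lem:boundlip}--\ref{lem:2skeleton}); Gromov's vanishing theorem then forces the simplicial volume of every Dehn filling of the Haken complement of a chosen local model to vanish; and Proposition~\ref{prop:haken}, a consequence of Thurston's geometrisation of Haken manifolds, converts this into a graph-manifold conclusion. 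A crucial prerequisite is the existence of a homotopically \emph{nontrivial} local model $\mathcal{V}_0$ (Proposition~\ref{prop:notrivialpi1}), established by contradiction via Corollary~\ref{cor:simplyconnected}; this is where the hypothesis that $M$ is not simply connected enters. Your proposal never uses this hypothesis, which is already a sign that the argument is incomplete.

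The closing dichotomy in your proposal also does not hold as stated. If every boundary torus $T_n^j$ of the thick pieces is compressible, it does \emph{not} follow that $M$ is a Dehn filling of $H_\infty$. By Lemma~\ref{lem:tore compressible}, compressing a torus $T_n^j$ in an irreducible manifold yields \emph{either} a solid torus on the compression side, \emph{or} a knot-exterior contained in a $3$-ball on the other side. In the second case the approximate hyperbolic piece $\bar H_n^i$ sits inside a $3$-ball; its image in $\pi_1(M)$ is trivial, the piece is homotopically invisible, and $H_\infty$ is certainly not realised as a link complement in $M$. So your asserted contradiction ($\vol(H_\infty)<V_0(M)$ yet $H_\infty=M\setminus L$) simply does not arise: there is no contradiction, because a graph manifold $M$ can perfectly well collapse with a nonempty hyperbolic limit trapped inside solid tori or $3$-balls. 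The paper uses the inequality $\vol(H^i)<V_0(M)$ in exactly the opposite direction --- to \emph{rule out} the link-complement case in advance (Lemma~\ref{lem:abelien}, Propositions~\ref{prop:abelien} and~\ref{prop:existence Wn}), concluding that the thick parts are confined to solid tori or knot exteriors inside balls, and then normalises by replacing knot exteriors by solid tori before running the covering argument. Reorganising your proof along these lines would require you to abandon the contradiction-from-Dehn-filling endgame and instead show directly that $M$ is a graph manifold when no incompressible boundary torus appears, which brings you back to the covering-and-simplicial-volume machinery.
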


Recall that if $M$ is a graph manifold, then $M$ is Seifert fibred or contains an incompressible
torus. Hence the conclusion of Theorem~\ref{TH:GRAPH} implies that $M$ satisfies the conclusion
of the Geometrisation Conjecture as stated at the beginning of this paper.

Note that Hypothesis~(2) of Theorem~\ref{TH:GRAPH} may be vacuous; this is
in particular the case if there is a sequence $\epsi_n\to 0$ such that for all $n$,  every point of $M$ is $\epsi_n$-thin with respect to $g_n$. In this situation, we shall say that the sequence $g_n$  \bydef{collapses.} Thus
 Theorem~\ref{TH:GRAPH} can be viewed as a \emph{weak collapsing} result in the sense that  we allow
the thick part to be nonempty, but require a control on its volume.
In the special case where $g_n$ collapses, the proof of  Theorem~\ref{TH:GRAPH} does not  use Hypothesis~(1), and leads to the conclusion that $M$ is a graph manifold.
Hence we also obtain the following version of Perelman's collapsing theorem:

\begin{corol}\label{cor:collapse} 
Let $M$ be a closed, orientable, irreducible, non-simply connected $3$-manifold. 
If $M$ admits a sequence of Riemannian metrics that collapses and has
controlled curvature in the sense of Perelman, then $M$ is a graph manifold.
\end{corol}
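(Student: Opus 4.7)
The plan is to derive Corollary~\ref{cor:collapse} from Theorem~\ref{TH:GRAPH} by observing that the collapsing hypothesis trivialises Hypothesis~(2), and by reading off a stronger conclusion from the proof of Theorem~\ref{TH:GRAPH} in the purely-thin case.

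Since $g_n$ collapses, there exists a sequence $\varepsilon_n\to 0$ such that every point of $M$ is $\varepsilon_n$-thin with respect to $g_n$. Hence for any fixed $\varepsilon>0$ and $n$ large, $(M,g_n)$ contains no $\varepsilon$-thick point at all, and the premise of Hypothesis~(2) of Theorem~\ref{TH:GRAPH} is never satisfied. Hypothesis~(3) is given in the statement of the corollary. For Hypothesis~(1), one rescales $g_n$ to unit volume by setting $\tilde g_n = \vol(g_n)^{-2/3}g_n$: the defining inequalities $K\ge -\rho^{-2}$ and $\vol\le\varepsilon\rho^{3}$ for $\varepsilon$-thinness, as well as the Perelman regularity inequalities $|\Rm|<K_0 r^{-2}$ and $|\nabla\Rm|<K_1 r^{-3}$, are all scale-invariant once the test radius is rescaled accordingly, so $\tilde g_n$ still collapses and still has locally controlled curvature. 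Theorem~\ref{TH:GRAPH} then applies and gives the dichotomy: $M$ is a graph manifold or contains an incompressible torus.

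It remains to strengthen this to the assertion that $M$ is itself a graph manifold. To this end one must revisit the proof of Theorem~\ref{TH:GRAPH}. Schematically, that proof splits $M$ into a thick part, on which a subsequence of $g_n$ converges in the $\mathcal C^2$ topology to a finite-volume hyperbolic model (via Hypothesis~(2)), and a thin part, on which a graph-manifold structure is manufactured from the Perelman curvature control (3); the incompressible tori appearing in the conclusion are precisely the frontier tori between the two kinds of pieces. When $g_n$ collapses, the thick part is empty for $n$ large, so no frontier tori are produced and all of $M$ falls under the thin-part construction, yielding a graph-manifold structure on all of $M$.

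The main technical obstacle is then the last step: assembling the locally Seifert-fibred or solid-torus charts produced on small balls of the thin part into a single, globally coherent graph-manifold decomposition of the closed manifold $M$. This is where the hypotheses of orientability, irreducibility, and $\pi_1(M)\neq 1$ enter essentially, to rule out pathological local models (spurious fake cells or $S^3$-factors) and to force the local fibred structures to match on overlaps; the Perelman regularity from Hypothesis~(3) is what ensures that the matching can be carried out by a compactness argument on the controlled scales. This gluing, rather than the application of Theorem~\ref{TH:GRAPH} itself, is the real content of the corollary.
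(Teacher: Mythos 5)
Your second and third paragraphs identify the two observations on which the corollary actually rests: the collapsing hypothesis makes Hypothesis~(2) of Theorem~\ref{TH:GRAPH} vacuous, and the ``contains an incompressible torus'' alternative in its conclusion is produced only by boundary tori of the approximately hyperbolic submanifolds $\bar H_n^i$ covering the thick part, so that with an empty thick part the proof outputs the graph-manifold alternative directly. Two of your other steps, however, do not hold up.

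The rescaling $\tilde g_n=\vol(g_n)^{-2/3}g_n$ intended to secure Hypothesis~(1) does not in general preserve the remaining hypotheses, because neither $\varepsilon$-thinness nor locally controlled curvature in the sense of Perelman is scale-invariant: the former is tested at radii $\rho\le 1$ and the latter at radii $r\le\bar r(\varepsilon)$, and these caps are fixed absolute numbers, not rescaled along with the metric. Writing $\lambda_n=\vol(g_n)^{-1/3}$, if $\vol(g_n)\to 0$ the rescaling dilates, so a thinness radius $\rho_n\le 1$ for $g_n$ may become $\lambda_n\rho_n>1$ for $\tilde g_n$, and thinness at a larger radius says nothing about thinness at admissible radii $\le 1$ (at the injectivity-radius scale the normalised volume is close to that of a Euclidean ball, hence not small). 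If instead $\vol(g_n)\to\infty$ the rescaling shrinks, the admissible range $r\le\lambda_n\bar r(\varepsilon)$ for the controlled-curvature conclusion tends to $0$, and no uniform $\bar r'(\varepsilon)>0$ works for the rescaled sequence. The paper sidesteps this entirely by observing that Hypothesis~(1) is used in the proof of Theorem~\ref{TH:GRAPH} only in Proposition~\ref{partie epaisse}, to bound the number of hyperbolic limits; when the thick part is eventually empty no such limits arise and the volume bound is never invoked, so no rescaling is needed.

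Your fourth paragraph also mischaracterises the thin-part argument. The paper does not produce locally Seifert-fibred or solid-torus charts and then glue them by a compactness-and-matching argument; that is precisely the Alexandrov-space fibration route the authors explicitly avoid. Instead they construct coverings of $M$ of dimension at most two by homotopically trivial, and then by virtually abelian, open sets (Assertions~\ref{aff:covering} and~\ref{aff:abelcov}), deduce via Corollary~\ref{cor:simplyconnected} that some local model $\mathcal{V}_0$ is homotopically nontrivial, show using Gromov's vanishing theorem that every Dehn filling of each component of $M\setminus\operatorname{int}\mathcal{V}_0$ has zero simplicial volume, and conclude by Proposition~\ref{prop:haken}. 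There is no separate gluing step, and the corollary has no content beyond the observations in your third paragraph: one simply re-reads the proof of Theorem~\ref{TH:GRAPH} with $m=0$ and notes that the incompressible-torus branch never occurs.
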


Sequences of metrics satisfying the hypotheses of Theorem~\ref{TH:GRAPH} are provided
by Perelman's construction and study of the Ricci flow with surgery~\cite{PerelmanII,PerelmanIII}. From these deep results and Theorem~\ref{TH:GRAPH} we deduce characterisations of hyperbolic and graph 3-manifolds: these are Theorems~\ref{th:minoration} and~\ref{corol:geom}. For this
we shall use the invariant $\overline{R}(M)$ defined below, which was  first suggested by M.~Anderson~\cite{Anderson2} (cf.\ also~\cite[\S 8]{PerelmanII} and~\cite[\S 93]{KleinerLott}.)

Let $M$ be a closed $3$-manifold. If $g$ is a Riemannian metric on $M$, we denote by
$R_{\textrm{min}}(g)$ the minimum of the scalar curvature $R$ of $g$ on $M$, and by
$\hat{R}(g)$ the scale invariant quantity $R_{\textrm{min}}(g) \vol(g)^{2/3}$. Note that
if $M$ has a hyperbolic metric $g_{hyp}$, then $\hat{R}(g_{hyp})$
is equal to $-6 \vol(g_{hyp})^{2/3}$.

We let $\overline{R}(M)$ be the (possibly infinite) supremum of $\hat R(g)$, taken over all Riemannian metrics $g$ on $M$.\footnote{When $M$ is aspherical, it turns out that
$\overline{R}(M)$ is finite, equal to the Yamabe invariant of $M$ (see e.g.~ \cite[Section 93]{KleinerLott}), but we will not use this fact.}

\begin{theo}\label{th:minoration} Let $M$ be a  closed, orientable, irreducible $3$-manifold. 
\begin{itemize}
\item[(1)] If $\overline{R}(M) \leq -6 V_0(M) ^{2/3}$, then $M$ is
hy\-per\-bo\-lic and $\overline{R}(M)=\hat{R}(g_{hyp})$.
\item[(2)] If $-6 V_0(M) ^{2/3} < \overline{R}(M)$, then $M$  contains an incompressible torus or is Seifert fibred.
\end{itemize}
\end{theo}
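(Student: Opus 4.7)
The plan is to combine Theorem~\ref{TH:GRAPH} with the output of Perelman's Ricci flow with surgery~\cite{PerelmanII,PerelmanIII}. I would use three facts from that theory. First, for any initial metric $g_0$ on $M$, a suitable rescaling $\bar g_n:=g(t_n)/t_n$ of the flow along some sequence $t_n\to\infty$ produces a sequence satisfying hypotheses~(1) and~(3) of Theorem~\ref{TH:GRAPH}, whose $\varepsilon$-thick parts subconverge in $\mathcal{C}^2$ to pointed finite-volume hyperbolic $3$-manifolds $H_1,\dots,H_k$; write $V_{\mathrm{hyp}}(M):=\sum_i\vol(H_i)$. Second, Perelman's monotonicity of the normalised $\lambda$-invariant gives the estimate $\hat R(g_0)\le -6\,V_{\mathrm{hyp}}(M)^{2/3}$ for every initial $g_0$, together with the convergence $\hat R(\bar g_n)\to -6\,V_{\mathrm{hyp}}(M)^{2/3}$; taking the supremum yields the identification $\overline R(M)=-6\,V_{\mathrm{hyp}}(M)^{2/3}$ whenever $M$ is aspherical. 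Third, Gromov's monotonicity of simplicial volume under Dehn filling gives $V_{\mathrm{hyp}}(M)\le V_0(M)$, with equality only if $M$ is itself hyperbolic (by Thurston's strict volume decrease under hyperbolic Dehn filling).

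For part~(2), I would split on the sign of $\overline R(M)$. If $\overline R(M)>0$, then $M$ admits a metric of positive scalar curvature; irreducibility together with Perelman's classification of closed PSC $3$-manifolds forces $M$ to be a spherical space form, hence Seifert fibred. If instead $\overline R(M)\le 0$, Perelman's elliptization shows $M$ is aspherical, and the identification from the first paragraph gives $V_{\mathrm{hyp}}(M)=(-\overline R(M)/6)^{3/2}<V_0(M)$. I would then pick $g_0$ with $\hat R(g_0)>-6V_0(M)^{2/3}$ and apply Theorem~\ref{TH:GRAPH} to the associated $\bar g_n$: the pointed limits of thick parts are hyperbolic manifolds of volume at most $V_{\mathrm{hyp}}(M)<V_0(M)$, which verifies hypothesis~(2), and $M$ is non-simply-connected because it is aspherical. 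Theorem~\ref{TH:GRAPH} then yields that $M$ is a graph manifold or contains an incompressible torus.

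For part~(1), the hypothesis $\overline R(M)\le -6V_0(M)^{2/3}$ forces $\overline R(M)<0$, so $M$ is aspherical and the identification $\overline R(M)=-6V_{\mathrm{hyp}}(M)^{2/3}$ applies. This gives $V_{\mathrm{hyp}}(M)\ge V_0(M)$, which combined with the Gromov inequality $V_{\mathrm{hyp}}(M)\le V_0(M)$ forces equality, and hence $M$ hyperbolic. Then $V_0(M)=\vol(g_{hyp})$ and $\hat R(g_{hyp})=-6V_0(M)^{2/3}=\overline R(M)$ follow.

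The main obstacle is the Ricci-flow input of the first paragraph: producing the rescaled sequence $\bar g_n$ with the precise regularity demanded by Theorem~\ref{TH:GRAPH}, and extracting Perelman's two-sided estimate that identifies $\overline R(M)$ with $-6V_{\mathrm{hyp}}(M)^{2/3}$ for aspherical $M$. Once these ingredients are in place, the simplicial-volume comparison and the two case analyses reduce the theorem to short deductions.
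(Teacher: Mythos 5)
Your proposal leans on two facts that, in the context of this paper, are off-limits: the identification $\overline R(M)=-6\,V_{\mathrm{hyp}}(M)^{2/3}$ for aspherical $M$, and the inequality $V_{\mathrm{hyp}}(M)\le V_0(M)$ derived from additivity of simplicial volume. Both hinge on knowing that the hyperbolic limits of the thick part are the hyperbolic pieces of a geometric decomposition of $M$ (equivalently, that the cuspidal tori separating the thick from the thin part are incompressible). That is Perelman's incompressibility theorem, and it is precisely the step that Theorem~\ref{TH:GRAPH} is designed to re-prove by other means; the paper even flags in a footnote that it will \emph{not} use the identity of $\overline R(M)$ with the Yamabe invariant. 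In particular, $V_{\mathrm{hyp}}(M)$ as you define it (sum of volumes of the hyperbolic limits along one Ricci flow) is not a priori an invariant of $M$, and the claimed convergence $\hat R(\bar g_n)\to -6\,V_{\mathrm{hyp}}(M)^{2/3}$ (the matching lower bound) is exactly the deep part. So, as written, the argument is circular for the purposes of this paper.

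The paper's own route avoids these inputs entirely. For Part~(1) it makes no use of Ricci flow: assuming $L_0\neq\emptyset$ in a realiser $H_0$ of $V_0(M)$, it constructs --- via Thurston's hyperbolic Dehn filling theorem applied to the orbifold with cone angle $2\pi/n$, followed by Salgueiro's explicit smoothing of the cone metric (Lemma~\ref{lem:desingularisation}) --- a genuine metric $g_\varepsilon$ on $M$ with $R_{\min}(g_\varepsilon)\ge -6$ and $\vol(g_\varepsilon)<V_0(M)$, so $\hat R(g_\varepsilon)>-6V_0(M)^{2/3}$, contradicting the hypothesis. For Part~(2), the paper needs only the one-sided estimate: from the monotonicity $\lim \hat R(g_n)\ge \hat R(g_0)>-6V_0(M)^{2/3}$ together with the fact that a compact core $\bar H(\xi)$ of any single hyperbolic limit $H$ yields $R_{\min}(g_n)\le -6(1-\xi)$ and $\vol(M_n)\ge (1-\xi)^2\vol(H)$ for large $n$, hence $\lim\hat R(g_n)\le -6\vol(H)^{2/3}$. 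This gives $\vol(H)<V_0(M)$ directly (Lemma~\ref{lem:volume}), verifying Hypothesis~(2) of Theorem~\ref{TH:GRAPH} with no recourse to incompressibility, the topological invariance of $V_{\mathrm{hyp}}$, or additivity of simplicial volume. Your split on the sign of $\overline R(M)$ versus the paper's split on finiteness of $\pi_1 M$ is a cosmetic difference; the essential gap is the reliance on the full Perelman machinery that the paper is precisely trying to bypass.
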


Theorem~\ref{th:minoration} immediately implies the Geometrization Conjecture.
Moreover, it shows that if $M$ is a closed hyperbolic $3$-manifold, then
the hyperbolic metric realises the maximum in $\overline{R}(M)$,
hence has the smallest volume among all complete metrics with scalar curvature bounded below by  $-6$. See~\cite{ADST} for an application.

Assertion~(1) of Theorem~\ref{th:minoration}  follows from the proof of Theorem~2.9
of~\cite{Anderson1}.  We shall give another proof based on Thurston's hyperbolic Dehn filling theorem, following~\cite[Chap. 3]{Salgueiro}. When $M$ is aspherical, Assertion~(2) is proved using Theorem~\ref{TH:GRAPH} and
results of Perelman on the long time behaviour of Ricci flow with surgery~\cite{PerelmanII}. (In fact, it is enough to assume that
$M$ is not simply connected.) For completeness, we have included in the
statement the case where $\pi_1M$ is finite, which follows from~\cite{PerelmanIII,
cm:question,MorganTian}.

Our last result is a complement to Theorem~\ref{th:minoration}. 
Let $V'_0(M)$ be the minimum of the volumes of all hyperbolic submanifolds $H\subset M$  having the property that $H$ is a link complement or $\bord \bar H$ has at
least one incompressible component.

\begin{theo}\label{corol:geom}
Let $M$ be a closed, orientable, irreducible $3$-manifold.
Then $M$ is a graph manifold if and only if $\overline{R}(M)>-6 V'_0(H)^{2/3}$.
\end{theo}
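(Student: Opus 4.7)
The plan is to reduce both implications of the biconditional to Perelman's identity $\overline{R}(M) = -6 V_\infty(M)^{2/3}$, valid for aspherical irreducible closed $3$-manifolds, where $V_\infty(M)$ denotes the sum of the hyperbolic volumes of the hyperbolic pieces in the JSJ decomposition of $M$; this identification comes from the long-time analysis of Ricci flow with surgery, see~\cite[\S 8]{PerelmanII} and~\cite[\S 93]{KleinerLott}.

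\emph{The ``if'' direction.} Assume $\overline{R}(M) > -6 V'_0(M)^{2/3}$. Since every hyperbolic link complement is eligible in the infimum defining $V'_0$, one has $V'_0(M)\le V_0(M)$, whence $\overline{R}(M)>-6V_0(M)^{2/3}$. Theorem~\ref{th:minoration}(2) then forces $M$ to be Seifert fibred (in which case $M$ is already a graph manifold) or to contain an incompressible torus; in the latter case $\pi_1M$ is infinite and $M$ is aspherical. By Geometrisation (deduced from Theorem~\ref{TH:GRAPH} together with Perelman's Ricci flow with surgery), the JSJ decomposition of $M$ has pieces that are either Seifert fibred or hyperbolic. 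If a hyperbolic piece $H$ existed, its JSJ boundary tori would be incompressible in $M$, so $H$ would be eligible for $V'_0(M)$ and $V'_0(M)\le\vol(H,g_{hyp})\le V_\infty(M)$. Perelman's identity would then yield $\overline{R}(M)=-6V_\infty(M)^{2/3}\le -6V'_0(M)^{2/3}$, contradicting the hypothesis. Hence every JSJ piece is Seifert and $M$ is a graph manifold.

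\emph{The ``only if'' direction and main obstacle.} Suppose $M$ is a graph manifold. Either $M$ is aspherical, in which case $V_\infty(M)=0$ and Perelman's identity gives $\overline{R}(M)=0$, or $\pi_1M$ is finite and $M$ is a spherical space form carrying a metric of positive scalar curvature, so $\overline{R}(M)>0$; either way $\overline{R}(M)\ge 0$. (An elementary alternative uses Cheeger--Gromov $F$-structures on graph manifolds, which produce metrics with $|K|\le 1$ and arbitrarily small volume, forcing $\hat R(g)\ge -6\vol(g)^{2/3}\to 0$ and hence $\overline{R}(M)\ge 0$.) By Myers~\cite{Myers} every closed $3$-manifold contains a hyperbolic link, and since the set of hyperbolic volumes is well-ordered, $V'_0(M)>0$; thus $\overline{R}(M)\ge 0>-6V'_0(M)^{2/3}$, as desired. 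The main obstacle in the whole argument is the upper bound $\overline{R}(M)\le -6V_\infty(M)^{2/3}$ needed in the ``if'' direction: this is the deep half of Perelman's identity, resting on the thick-thin decomposition of Ricci flow with surgery at large times and on Corollary~\ref{cor:collapse} to recognise the thin part as a graph manifold. The matching lower bound is obtained more softly by gluing truncated cusped hyperbolic metrics on the hyperbolic pieces with collapsing metrics on the Seifert pieces along the JSJ tori.
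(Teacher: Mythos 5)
Your proposal is correct in outline, but the ``if'' direction follows a genuinely different and substantially heavier route than the paper's. The paper does not invoke Perelman's identity $\overline{R}(M)=-6V_\infty(M)^{2/3}$ at all --- indeed a footnote in the introduction explicitly declines to use the computation of $\overline{R}(M)$ as the Yamabe invariant. Instead, it proves the ``if'' direction by re-running the entire weak collapsing argument with the threshold $V_0(M)$ replaced by $V_0'(M)$ (Theorem~\ref{TH:GRAPH2}): since $V_0'$ also counts hyperbolic submanifolds with an incompressible boundary torus, the volume bound $\vol(H^i)<V_0'(M)$ on the hyperbolic limits forces \emph{all} boundary tori of the thick pieces to be compressible (Lemma~\ref{lem:compressible}), so the dichotomy ``graph manifold or incompressible torus'' of Theorem~\ref{TH:GRAPH} sharpens to ``graph manifold''. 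Your route first extracts the weaker dichotomy from Theorem~\ref{th:minoration}(2) (your reduction $V_0'(M)\le V_0(M)$ is fine) and then excludes hyperbolic JSJ pieces via the upper bound $\overline{R}(M)\le -6V_\infty(M)^{2/3}$. You correctly identify that bound as the crux, but be aware of what it costs: its proof requires not only the collapsing theorem but also the incompressibility of the cuspidal tori of the long-time hyperbolic limits (Perelman's \S 7.3, via minimal surfaces), a result this paper never uses and whose avoidance is part of the point of the $V_0$/$V_0'$ formalism. So your argument is valid as an application of external deep results, whereas the paper's is self-contained given its own machinery. Your ``only if'' direction coincides with the paper's (Cheeger--Gromov collapse of graph manifolds giving $\overline{R}(M)\ge 0$, plus $V_0'(M)>0$ from Myers and well-ordering of hyperbolic volumes); the detour through $V_\infty(M)=0$ is unnecessary there.
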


This article is organized as follows: in Section~1, we describe the strategy of
the proof of Theorem~\ref{TH:GRAPH}. This proof is then given in Sections~2--4.
In Section~5, we recall the necessary results coming from
the Ricci flow. In Section~6, we prove Theorems~\ref{th:minoration}
and~\ref{corol:geom}.

\paragraph{Acknowledgments}
The authors wish to thank John Morgan for  pointing out the proofs of 
Proposition~\ref{prop:nerf} and Corollary~\ref{cor:simplyconnected} which allowed to extend Theorem~\ref{TH:GRAPH} from the aspherical case to the non-simply connected case.

We also wish to thank the Clay Mathematics Institute 
for its financial support, as well as the project FEDER-MEC MTM2006-04353.

\section {Sketch of proof of Theorem~\ref{TH:GRAPH}}\label{sec:pres}
For classical $3$-manifold theory, we use \cite{Jaco}, \cite{Hempel} as main references, as well as ~\cite{BMP} for post-Thurston
results. To avoid any confusion between metric balls and topological balls, we shall call \bydef{$3$-ball} a $3$-manifold homeomorphic to the closed unit ball in $\Rr^3$. 
By contrast, our metric balls $B(x,r)$ are open.

Throughout the paper we work in the smooth category.
Recall that a \bydef{Haken manifold} is a connected, compact, orientable, irreducible $3$-manifold which contains an incompressible surface. Any connected, compact, orientable, irreducible $3$-manifold whose boundary is not empty is Haken. It follows from deep work of W.~Thurston
and earlier work of Jaco-Shalen and Johannson that every Haken manifold has a canonical
decomposition along incompressible tori into Seifert and hyperbolic pieces (see e.g.~the references given in~\cite{BMP}.) We call this
the \bydef{geometric decomposition} of the Haken manifold $M$.
Moreover, a Haken manifold is a  graph manifold if and only if all pieces in its geometric decomposition are Seifert.

Another key notion used in the proof of Theorem~\ref{TH:GRAPH} is
the \emph{simplicial volume,} sometimes called Gromov's norm, introduced by M.~Gromov in~\cite{GromovIHES}. Our proof relies on an additivity result for the simplicial volume under gluing along tori (see~\cite{GromovIHES,Kuessner,Soma}) which implies that  the simplicial volume of a $3$-manifold admitting a geometric decomposition is proportional to the sum of the volumes of the hyperbolic pieces.
In particular, such a manifold has zero simplicial volume if and only if it is a graph manifold.

We also use in an essential way Gromov's vanishing theorem \cite{GromovIHES,Ivanov}: if a $n$-dimensional closed manifold $M$ can be covered by open sets  $U_i$ such that the covering has dimension less than $n$ and the image of the canonical homomorphism $\pi_1 U_i\to \pi_1 M$ is amenable for all $i$, then the simplicial volume of $M$ vanishes.
(Recall that the dimension of a finite covering $U_i$ is the dimension of its nerve.)

Below we outline the proof of Theorem~\ref{TH:GRAPH}. For simplicity, we explain it in the special case where the sequence $g_n$ collapses.
In the last paragraph, we shall say a few words about the general case.

Before discussing the proof proper, we give an example of a \emph{covering argument} which
can be used to deduce topological information on $M$ (namely that $M$ has zero
simplicial volume) from the collapsing hypothesis.

For $n$ large enough, thanks to the local control on the curvature, each point has a neighbourhood in  $(M,g_n)$ which is close to a metric ball in some manifold of nonnegative sectional curvature, and whose volume is small compared to the cube of the radius. These neighbourhoods will be called \emph{local models}. From the classification of manifolds with nonnegative sectional curvature, we deduce that these local models have virtually abelian,  hence amenable fundamental groups.
A technique introduced by Gromov~\cite{GromovIHES} yields a covering of $M$, whose dimension is at most $2$, by open sets contained in these neighbourhoods. As a consequence, Gromov's vanishing theorem implies that the simplicial volume of $M$ vanishes.

The previous scheme, together with the additivity of the simplicial volume under gluing along incompressible tori, shows that a manifold which admits a geometric decomposition and a sequence of collapsing metrics is a graph manifold. This is however insufficient to prove Theorem~\ref{TH:GRAPH} since we do not assume that $M$ admits a geometric decomposition! Hence we need a trick
similar to those of~\cite{bp} and~\cite{blp}, which we now explain.

In the first step, we find a local model $U$ such that all connected components of $M\setminus U$ are
 Haken. This requirement is equivalent to irreducibility of each component of $M\setminus U$. Since $M$ is irreducible, it suffices to show that $U$ is not contained in a $3$-ball. This is in particular the case if $U$ is \bydef{homotopically nontrivial}, i.e.~the homomorphism $\pi_1(U)\to\pi_1(M)$ has
nontrivial image.

The proof of the existence of a homotopically nontrivial local model  $U$ is done by contradiction:
assuming that all local models are homotopically trivial,  we construct a covering of $M$ of dimension less than or equal to $2$ by homotopically trivial open sets. By a result of J.C. G\'omez-Larra\~naga and F. Gonz\'alez-Acu\~na \cite{GG}, a closed irreducible 3-manifold admitting such a covering must have trivial fundamental group. This is where we use the hypothesis that $M$ is not simply connected.

The second step, which is again a covering argument but done relatively to some fixed homotopically
nontrivial local model $U$, shows that any manifold obtained by 
Dehn filling on $Y:=M\setminus U$ has a covering of dimension 
less than or equal to $2$ by virtually abelian open 
sets, and therefore has vanishing simplicial volume.
We conclude using Proposition~\ref{prop:haken}, which states that
if $Y$ a  Haken manifold with boundary a collection of tori and such that the simplicial volume of every Dehn filling on $Y$ vanishes, then $Y$  is a graph manifold.\footnote{As already mentioned in~\cite{bp,blp}, Proposition~\ref{prop:haken} is a consequence of the geo\-me\-tri\-sa\-tion of Haken manifolds, additivity of the simplicial volume mentioned above, and Thurston's hyperbolic Dehn filling theorem.}
This finishes the sketch of proof of Theorem~\ref{TH:GRAPH} in the collapsing case.

In this text, we shall not separate the case when $g_n$ collapses, but we shall treat directly the general case. This implies that we first need to cover the thick part by submanifolds $H_n^i$ approximating compact cores of  limiting hyperbolic manifolds (Section~\ref{sec:epaisse}). We then cover the thin part by local models (Section~\ref{sec:mince}). The bulk of the proof is in Section~\ref{sec:preuve}: assuming
that $M$ contains no incompressible tori, we consider the covering of $M$ by approximately hyperbolic submanifolds and local models
of the thin part and perform two covering arguments: the first one shows that at least one of these
open subsets is homotopically nontrivial in $M$; the second one is done relatively to this homotopically
nontrivial subset and proves that $M$ is a graph manifold.

\section{Structure of the thick part}\label{sec:epaisse}

Until Section~\ref{sec:preuve}, we consider a $3$-manifold $M$ and a sequence of Riemannian metrics $g_n$ satisfying the hypotheses of Theorem~\ref{TH:GRAPH}. For the sake of simplicity, in the sequel we use the notation $M_n:= (M,g_n)$. The goal of this section is to describe the thick part of the manifolds $M_n$ and to make the link between the topology of the thick part and the topology of $M$.
We denote by $M_n^-(\varepsilon )$ the $\varepsilon$-thin part of $M_n$, and by
$M_n^+(\varepsilon)$ its $\varepsilon$-thick part. 
\subsection{Covering the thick part}

\begin{prop}\label{partie epaisse}
Up to taking a subsequence of $M_n$, 
there exists a finite (possibly empty) collection  of pointed hyperbolic manifolds $(H^1,*^1),
\ldots,$ $ (H^m,*^m)$ and for every $1 \leq i\leq m$ a sequence $x_n^i\in M_n$ satisfying:
\begin{itemize}
\item[(i)] $\lim\limits_{n\to\infty}(M_n,x_n^i)=(H^i,*^i)$ in the $\mathcal C^2$ topology.
\item[(ii)] For all sufficiently small $\varepsilon>0$, there exist $n_0(\varepsilon)$ and $C(\varepsilon)$ such that for all $n\geq n_0(\varepsilon)$ one has $M_n^+(\varepsilon)\subset \bigcup_i B(x_n^i,C(\varepsilon))$.
\end{itemize}
\end{prop}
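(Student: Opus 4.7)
The plan is to construct the hyperbolic limits $(H^i,*^i)$ and the sequences $x_n^i$ by a greedy iterative procedure and then prove termination by a volume argument. Fix once and for all a constant $\varepsilon_1>0$ small enough that the $\varepsilon_1$-thick part (in the sense of this paper) of every complete finite-volume hyperbolic $3$-manifold is nonempty and compact, and moreover so that $\varepsilon_1$-thickness is stable under small $\mathcal{C}^2$-perturbations of the metric; any $\varepsilon_1$ smaller than the $3$-dimensional Margulis constant is suitable.

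The algorithm begins with $m:=0$ and no chosen points, and proceeds as follows. If, after replacing $(M_n)$ by a subsequence, one can find a sequence of points $y_n\in M_n^+(\varepsilon_1)$ such that $d(y_n,x_n^j)\to\infty$ for every $j\le m$, then set $x_n^{m+1}:=y_n$; by Hypothesis~(2), extract a further subsequence along which $(M_n,x_n^{m+1})$ converges in $\mathcal{C}^2$ to a pointed hyperbolic manifold $(H^{m+1},*^{m+1})$ of volume strictly less than $V_0(M)$, and increment $m$. Otherwise, stop. Termination is forced by volume: let $v_0>0$ denote the infimum of the volumes of complete orientable hyperbolic $3$-manifolds. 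For each $i\le m$ and each $R>0$, the $\mathcal{C}^2$-convergence yields $\vol_{g_n}(B(x_n^i,R))\to\vol(B(*^i,R))$ as $n\to\infty$, and $\vol(B(*^i,R))\to\vol(H^i)\ge v_0$ as $R\to\infty$ since $H^i$ has finite volume. Since $d(x_n^i,x_n^j)\to\infty$ for $i\ne j$, the balls $B(x_n^i,R)$ are eventually pairwise disjoint, so the uniform upper bound on $\vol(g_n)$ given by Hypothesis~(1) forces $m$ to be finite. We keep the final nested subsequence.

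It remains to check the covering statement (ii). Suppose, for contradiction, that (ii) fails for some $\varepsilon\in(0,\varepsilon_1]$: then, after passing to a subsequence, there exist points $y_n\in M_n^+(\varepsilon)$ with $d(y_n,x_n^i)\to\infty$ for every $i\le m$. By Hypothesis~(2), a subsubsequence of $(M_n,y_n)$ converges in $\mathcal{C}^2$ to a pointed hyperbolic manifold $(H,*)$. By the choice of $\varepsilon_1$, the $\varepsilon_1$-thick part of $H$ is nonempty and compact, so there exists $p\in H$ which is $\varepsilon_1$-thick in $H$ at some finite distance $D$ from $*$. Using the $\mathcal{C}^2$-convergence on the ball $B(*,D+1)$, we obtain points $p_n\in M_n$ with $d(p_n,y_n)\le D+1$ which are $\varepsilon_1$-thick in $M_n$ for $n$ large. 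But then $d(p_n,x_n^i)\to\infty$ for every $i$, contradicting the termination of the algorithm at step $m+1$. The main technical obstacle in this argument is the stability of the $\varepsilon_1$-thickness condition under the $\mathcal{C}^2$-limit procedure in both directions; this rests on Hypothesis~(3), which upgrades the pointed Gromov--Hausdorff convergence provided by Hypothesis~(2) into genuine $\mathcal{C}^2$-convergence of metric tensors and guarantees that volumes of small balls pass continuously to the limit.
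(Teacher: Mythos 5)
Your proof is correct and takes essentially the same route as the paper: a greedy extraction of pointed hyperbolic limits from the thick part, with termination forced by the volume bound and a universal lower bound on hyperbolic volumes. The one substantive deviation is that you run the iteration with a single \emph{fixed} threshold $\varepsilon_1$ and then derive the covering statement (ii) post hoc, whereas the paper lets the threshold $\varepsilon_{i}$ be chosen adaptively at each step: the iteration continues precisely as long as conclusion~(ii) fails, so that when it terminates, (ii) holds automatically for all $\varepsilon$ without further argument. Your post-hoc step requires the additional (correct, but not entirely free) input that every complete finite-volume hyperbolic $3$-manifold has a nonempty and compact $\varepsilon_1$-thick part with some margin, via the Margulis lemma; the paper avoids invoking that here. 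One point you should tighten: as stated, ``$\varepsilon_1$-thickness is stable under small $\mathcal{C}^2$-perturbations'' is not literally true, since $\varepsilon$-thickness is a closed condition. What you actually need, and what the Margulis lemma gives you, is that one can take $\varepsilon_1$ small enough that some point of $H$ is $2\varepsilon_1$-thick (say, a point of nearly maximal injectivity radius); then the nearby points $p_n\in M_n$ are $\varepsilon_1$-thick for $n$ large, which is what your contradiction requires.
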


\begin{proof}

By assumption, the sequence  $\vol(M_n)$ is bounded above.
Let $\mu_0>0$ be a universal number such that any hyperbolic manifold has volume at least  $\mu_0$.

If for all $\varepsilon>0$ we have $M_n^+(\varepsilon)=\emptyset$ for $n$ large enough, then
Proposition~\ref{partie epaisse} is vacuously true. Otherwise, we use Hypothesis (2) of Theorem~\ref{TH:GRAPH}: up to taking a subsequence of $M_n$, there exists $\varepsilon_1>0$ and a sequence of points $x^1_n\in M_n^+(\varepsilon_1)$ such that $(M_n,x^1_n)$ converges to a pointed hyperbolic manifold $(H^1, *^1)$.

If for all $\varepsilon>0$ there exists $C(\epsilon)$ such that, for $n$ large enough, $M_n^+(\varepsilon)$ is included in $B(x^1_n,C(\varepsilon))$, then we are done. Otherwise there exists $\varepsilon_2>0$ and a sequence $x^2_n\in M_n^+(\varepsilon_2)$ such that  $d(x^1_n,x^2_n)\to\infty$. Again Hypothesis~(2) 
of Theorem~\ref{TH:GRAPH} ensures that, after taking a subsequence, the sequence $(M_n,x^2_n)$ converges to a pointed hyperbolic manifold $(H^2, *^2)$.

Note that for each $i$, and for $n$ sufficiently large, $M_n$ contains a submanifold  $\mathcal C^2$-close to some compact core of $H_i$ and whose volume is greater than or equal to $\mu_0/2$. Moreover, for $n$ fixed and large, these submanifolds are pairwise disjoint.
Since the volume of the manifolds $M^n$ is uniformly bounded above this construction has to stop. Condition~(ii) of the conclusion of Proposition~\ref{partie epaisse} is then satisfied for  $0<\varepsilon<\varepsilon_k$.
\end{proof}

\begin{rem}
By Proposition~\ref{partie epaisse} one can choose sequences $\varepsilon_n\to 0$ and $r_n\to \infty$ 
such that the  ball $B(x^i_n,r_n)$ is arbitrarily close to a metric ball  $B(*^i,r_n)\subset H^i$, 
for $i=1,\ldots, m $, and every point of $M_n\setminus \bigcup_i B(x^i_n,r_n)$ is $\varepsilon_n$-thin.
\end{rem}

Let us fix a sequence of positive real numbers $\varepsilon_n\to 0$.
Let $H^1,\ldots,$ $H^m$ be hyperbolic limits given by 
Proposition~\ref{partie epaisse}.  For each $i$ we choose a compact core $\bar H^i$
for $H^i$ and for each $n$ a submanifold $\bar H_n^i$ and an
approximation $\phi_n^i:\bar H_n^i \to \bar H^i$. Up to renumbering, one can assume that for all $n$ 
{we have $M_n\setminus\bigcup \bar H^i_n\subset
M_n^-(\varepsilon_n)$}, and that the $\bar H^i_n$'s are disjoint.

The hypothesis that the volume of each hyperbolic limit $H^i$ is less than $V_0$ implies that for $n$ sufficiently large no component  $\bar H^i_n$ is homeomorphic to the exterior of a link in $M$.

The logic of the proof is the following:
each boundary component of $\bar H^i_n$  is a torus. If one of those tori is incompressible, then the conclusion of Theorem~\ref{TH:GRAPH} is true. The interesting case is when all the tori that appear in the boundary of the thick part are compressible. The remainder of this section is devoted to the
two following results:

\begin{prop}\label{prop:abelien} 
Up to taking a subsequence, one of the following properties is satisfied:
\begin{itemize}
\item[(i)] There exists an integer $i_0 \in \{1,\ldots, m \}$ such that
$\partial \bar H_n^{i_0}$ contains an incompressible torus for all $n$, or
\item[(ii)]  for all $i \in \{1,\ldots, m \}$,  
$\bar H_n^{i}$ is embedded in a solid torus or in a $3$-ball contained in $M_n$ for all $n$.
\end{itemize}
\end{prop}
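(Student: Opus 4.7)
My plan is to argue by contradiction, assuming that no subsequence satisfies either (i) or (ii). Since $m$ is fixed and the boundary tori of $\bar H_n^i$ are labelled by the cusps of $H^i$ independently of $n$ (through the approximations $\phi_n^i$), a standard pigeonhole/diagonal argument will let me pass to a subsequence along which, for every $i$ and every $n$, every component of $\partial\bar H_n^i$ is compressible in $M$. I will then show that this forces each $\bar H_n^i$ to sit inside a $3$-ball in $M$, which already realises conclusion (ii).

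Fix $i$ and $n$, and let $T_1,\dots,T_k$ denote the boundary tori of $\bar H_n^i$. Since $\bar H_n^i$ is diffeomorphic to a compact core of the cusped hyperbolic manifold $H^i$, each $T_j$ is incompressible in $\bar H_n^i$ (isotopic to a cusp cross-section), so every compressing disk for $T_j$ in $M$ lies in $M\setminus\Int(\bar H_n^i)$. I will then invoke the classical dichotomy that a compressible torus in an orientable, irreducible $3$-manifold either (a) bounds a solid torus in $M$, or (b) lies in a $3$-ball in $M$: compressing $T_j$ along a disk $D_j$ produces a sphere $\Sigma_j$ bounding a ball $B_j$ by irreducibility of $M$, and the two alternatives correspond to $B_j$ lying, respectively, opposite to or on the same side of $\Sigma_j$ as the regular neighbourhood $N(T_j\cup D_j)$. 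In case (a) the solid torus $V_j:=B_j\cup N(T_j\cup D_j)$ has $\partial V_j=T_j$ and lies on the compressing (outer) side of $T_j$.

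Suppose first that some $T_j$ falls in case (b). Then $T_j$ separates its ambient $3$-ball $B_j$ into two closed pieces, one of which, call it $B'$, satisfies $\partial B'=T_j$. Since the compressing disk reaches $\partial B_j$ from the outer side of $T_j$, the manifold $\bar H_n^i$ sits on the $B'$-side of $T_j$; any path in $\bar H_n^i$ stays on that side (it cannot cross $T_j\subset\partial\bar H_n^i$), so by connectedness $\bar H_n^i\subset B'\subset B_j$, giving conclusion (ii). Suppose now that every $T_j$ is in case (a) with solid torus $V_j$ on the outer side. The key step of the plan is to identify $V_j$ with the component $W_j$ of $M\setminus\Int(\bar H_n^i)$ adjacent to $T_j$. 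I will argue: both $V_j$ and $W_j$ contain a collar of $T_j$ on the outer side, so every point of $T_j$ (and a fortiori every point of $V_j$) has a neighbourhood in $W_j$ contained in $V_j$; this makes $V_j$ open in $W_j$, and since $V_j$ is compact (hence closed in $W_j$) and $W_j$ is connected, $V_j=W_j$. Distinct boundary tori then yield distinct components (two solid tori with different boundary cannot coincide), so $M\setminus\Int(\bar H_n^i)=V_1\sqcup\cdots\sqcup V_k$, exhibiting $\bar H_n^i$ as the exterior in $M$ of the link $L$ formed by the cores of the $V_j$'s. Since $\Int(\bar H_n^i)\cong H^i$ carries a complete finite-volume hyperbolic structure, $L$ is a hyperbolic link in $M$ with $\vol(M\setminus L)=\vol(H^i)<V_0(M)$, contradicting the definition of $V_0(M)$. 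Hence case (a) for every $j$ is impossible, and case (b) for some $j$ delivers the required $3$-ball.

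The main obstacle I anticipate is the collar argument establishing $V_j=W_j$; once this step is settled, the classical dichotomy for compressible tori combined with the hypothesis $\vol(H^i)<V_0(M)$ closes the proof by eliminating the all-case-(a) scenario.
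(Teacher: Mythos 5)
Your proof follows essentially the same route as the paper's, which reduces the statement to Lemma~\ref{lem:abelien}, but it has a genuine gap in the step where you assert that ``every compressing disk for $T_j$ in $M$ lies in $M\setminus\Int(\bar H_n^i)$'' and hence lies on the outer side of $T_j$. Incompressibility of $T_j$ in $\bar H_n^i$ only rules out a compressing disc whose interior stays inside $\bar H_n^i$. When $\bar H_n^i$ has more than one boundary component, a compressing disc for $T_j$ can enter $\Int\bar H_n^i$ through a collar of $T_j$ and exit through another boundary torus $T_{j'}$; equivalently, $T_j$ may be incompressible in the outer piece $V_j$ (for instance if $V_j$ is a nontrivial knot exterior sitting in a ball), so that the compression can only be performed from the side containing $\bar H_n^i$. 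Your dichotomy (a)/(b) therefore is not exhaustive, and the ``all case (a)'' argument and the ``some case (b)'' argument together do not cover all possibilities.

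Concretely, when $T_j$ compresses only on the inner side, Lemma~\ref{lem:tore compressible} applied from that side yields two further possibilities: either $M\setminus\Int V_j$ is a solid torus, in which case $\bar H_n^i$ is contained in a solid torus (this is precisely the solid-torus alternative in conclusion~(ii), which your argument never reaches, so your proof would terminate in an unresolved case rather than a conclusion); or $V_j$ is a knot exterior contained in a ball, in which case one must replace $V_j$ by a solid torus (without changing $M$) before the ``all outer pieces are solid tori $\Rightarrow$ $\bar H_n^i$ is a link exterior'' contradiction can be drawn. The paper's proof handles both of these through the first sentence of the proof of Lemma~\ref{lem:abelien} (``If one of them bounds a solid torus containing $\bar H$, we can choose this solid torus as $Y$. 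Henceforth we assume that this is not the case.'') and then the careful treatment of the $V_j$'s that are not solid tori. Your identification $V_j=W_j$ is correct (though the open-and-closed argument as you phrase it is muddled; the clean statement is that $\Int W_j$ is a nonempty open and closed subset of the connected set $\Int V_j$), and the final volume contradiction via $V_0(M)$ is the same as the paper's; the missing case analysis is the only substantive gap.
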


\begin{prop}\label{prop:existence Wn}
If Conclusion (ii) of Proposition~\ref{prop:abelien}
is satisfied, then either $M$ is a lens space or there exists for each $n$ a submanifold $W_n\subset M_n$
such that:
\begin{itemize}
\item[(i)] $\bigcup \bar H_n^i  \subset W_n$.
\item[(ii)] Each connected component of  $W_n$ is a solid torus, or contained
in a $3$-ball and homeomorphic to the exterior of a knot in $S^3$.
\item[(iii)] The boundary of each component of $W_n$ is a component of $\bigcup_i \partial \bar H_n^i$.
\end{itemize}
\end{prop}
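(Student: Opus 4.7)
The plan is to build $W_n$ as a disjoint union $\bigsqcup_i W_n^i$, where each $W_n^i$ contains the corresponding $\bar H_n^i$ and has a single torus boundary that is one of the boundary components of $\bar H_n^i$. Fix $i$ and write $T_1, \ldots, T_k$ for the boundary tori of $\bar H_n^i$. Since we are in case (ii) of Proposition~\ref{prop:abelien}, each $T_j$ is compressible in $M$. The classical compression argument combined with the irreducibility of $M$ (compress $T_j$ along a disk to produce a $2$-sphere, which bounds a $3$-ball by irreducibility, and distinguish which side of the ball contains the compressing disk) shows that each such $T_j$ either bounds an embedded solid torus in $M$ or lies inside an embedded $3$-ball of $M$.

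For each $j$ let $Y_j$ denote the closure of the component of $M\setminus \mathrm{int}\,\bar H_n^i$ adjacent to $T_j$. Call $T_j$ \emph{outer} if the solid torus it bounds (resp.\ the $3$-ball containing it) also contains $\bar H_n^i$, and \emph{inner} otherwise. The core claim is that, unless $M$ is a lens space, exactly one boundary torus of $\bar H_n^i$ is outer. Existence of an outer torus follows from the containment $\bar H_n^i \subset X_n^i$ provided by Proposition~\ref{prop:abelien}(ii), where $X_n^i$ is the ambient solid torus or $3$-ball: the component of $\partial \bar H_n^i$ separating the rest of $\bar H_n^i$ from $\partial X_n^i$ plays the role. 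For uniqueness, if two distinct boundary tori were both outer, then examining the relative position of the two ambient solid tori (each containing $\bar H_n^i$) would exhibit $M$ as a union of two solid tori glued along a torus, i.e.\ a genus-one Heegaard splitting, so irreducibility forces $M$ to be a lens space (or $S^3$). Denote the unique outer torus by $T_1$. For each $j\geq 2$, the region $Y_j$ is compact, contained in $X_n^i$, and bounded by the compressible torus $T_j$; the structural dichotomy together with the fact that $Y_j$ lies on the non-outer side of $T_j$ forces $Y_j$ itself to be a solid torus, since the $3$-ball alternative would put $\bar H_n^i$ on the wrong side.

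Set $W_n^i := \bar H_n^i \cup \bigcup_{j\geq 2} Y_j$: this is a connected submanifold with $\partial W_n^i = T_1$ and $\bar H_n^i\subset W_n^i\subset X_n^i$. If $X_n^i$ is a solid torus, then $W_n^i$ is itself a solid torus, because its complement inside $X_n^i$ is a collar on $\partial X_n^i$; if $X_n^i$ is a $3$-ball, then, after embedding $X_n^i \hookrightarrow S^3$ via a cap, $W_n^i$ becomes the exterior of a knot in $S^3$. The main difficulty lies in the uniqueness argument for the outer torus and in verifying that each inner $Y_j$ is genuinely a solid torus rather than merely sitting in a $3$-ball --- both points require a careful combination of the classification of compressible tori, the irreducibility of $M$, and the prescribed containment $\bar H_n^i \subset X_n^i$, with the genuine failure modes absorbed into the lens space exception.
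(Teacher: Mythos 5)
Your proof concentrates on the construction of a candidate $W_n^i$ for a single fixed index $i$, but leaves untreated the step that the paper's proof is actually about: why the different $W_n^i$'s fit together into a submanifold $W_n$ whose components still satisfy (ii) and (iii). You simply declare $W_n = \bigsqcup_i W_n^i$ ``a disjoint union'', yet nothing in your argument shows that the pieces are disjoint. Indeed they need not be: one $W_n^{i_1}$ can be contained in another $W_n^{i_2}$, or two of them could have interiors that together cover $M$. The paper's proof handles this precisely. Having produced for each $i$ a submanifold $Y_n^i\supset \bar H_n^i$ (by Lemma~\ref{lem:abelien}, which already does the per--$i$ work), it observes that the $Y_n^i$ have pairwise disjoint boundaries (these are disjoint tori of $\bigcup_i\partial\bar H_n^i$), so two of them must be nested or disjoint \emph{unless} their union is all of $M_n$; and it rules out the latter by the theorems on manifolds covered by two solid tori (lens space), a solid torus and a ball, or two balls ($S^3$). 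Consequently, $W_n = \bigcup_i Y_n^i$ has components homeomorphic to some $Y_n^i$, which gives (ii) and (iii). Without an analogue of this argument, your proof does not establish the proposition.

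There are also internal problems with your per--$i$ construction, though they are fixable. First, when an inner torus $T_j$ lies in a $3$-ball that does not contain $\bar H_n^i$, the region $Y_j$ on the non-outer side is, by Lemma~\ref{lem:tore compressible}(iii), a knot exterior in $S^3$, not necessarily a solid torus; your claim that ``the $3$-ball alternative would put $\bar H_n^i$ on the wrong side'' does not force $Y_j$ to be a solid torus. Second, the assertion that $W_n^i$ is a solid torus when $X_n^i$ is, ``because its complement inside $X_n^i$ is a collar on $\partial X_n^i$'', is not justified: the region between the outer torus $T_1$ and $\partial X_n^i$ need not be $T^2\times I$ a priori. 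The paper avoids both pitfalls by not tracking the topology of the individual $Y_j$'s at all: Lemma~\ref{lem:abelien} instead \emph{searches} for a single boundary torus of $\bar H$ whose exterior side can be taken as $Y$ (either a solid torus directly, or, after a compression-to-ball argument, a knot exterior inside a ball). I would recommend adopting the paper's more algorithmic approach both for the per--$i$ construction and, crucially, adding the missing disjoint-or-nested argument for the global assembly of $W_n$.
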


Subsection~\ref{un peu de topo} is devoted to general topological results concerning compressible tori in $3$-manifolds and abelian submanifolds. Proposition~\ref{prop:abelien} and~\ref{prop:existence Wn} will be proved in the Subsection~\ref{preuve abelien Wn}.

\subsection{Submanifolds with compressible boundary}
\label{un peu de topo}

Let $X$ be an orientable, irreducible $3$-manifold and $T$ be a compressible torus embedded in  $X$.
The Loop Theorem shows the existence of a \bydef{compression disc} $D$ for $T$, that is, a
disc $D$ embedded in $M$ such that $D\cap T=\bord D$ and the curve
$\bord D$ is not null homotopic in  $T$. By cutting open $T$ along an open small regular neighbourhood of $D$ and gluing two parallel copies of $D$ along the boundary curves,  one  constructs an embedded 2-sphere
 $S$ in $X$. We say that $S$ is obtained by \bydef{compressing $T$ along $D$}.

Since $X$ is assumed to be irreducible, $S$ bounds a $3$-ball $B$. There are two possible situations depending on whether $B$ contains $T$ or not. The following lemma collects some standard results that we shall need.

\begin{lem}\label{lem:tore compressible}
Let $X$ be an orientable, irreducible $3$-manifold and $T$ be a compressible torus embedded in $X$. Let $D$ be a compression disc for $T$, $S$ be a sphere obtained by compressing $T$ along $D$, and $B$
a ball bounded by $S$. Then:
\begin{itemize}
\item[i)] $X\setminus T$ has two connected components $U,V$, and $D$
is contained in the closure of one of them, say $U$.
\item[ii)] If $B$ does not contain $T$, then $B$ is contained in $\bar U$, and
$\bar U$ is a solid torus.
\item[iii)] If $B$ contains $T$, then $B$ contains $V$, and $\bar V$ is
homeomorphic to the exterior of a knot in $S^3$. In this case, there exists a homeomorphism $f$
from the boundary of $S^1\times D^2$ into $T$ such that the manifold obtained by gluing  $S^1\times D^2$
to $\bar U$ along $f$ is homeomorphic to $X$.
\end{itemize}
\end{lem}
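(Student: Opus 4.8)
The plan is to run everything off the single structural fact that $X$ is irreducible, so that any embedded $2$-sphere bounds a $3$-ball, together with a careful bookkeeping of what compression does locally. First I fix a regular neighbourhood $N(D)\cong D^2\times[-1,1]$ of $D$ meeting $T$ in the annulus $A:=\partial D^2\times[-1,1]$ around $\partial D$, so that $S=(T\setminus\Int A)\cup(D^2\times\{-1\})\cup(D^2\times\{1\})$; since $\partial D$ is essential, $T\setminus\Int A$ is an annulus, which is exactly why $S$ is a $2$-sphere. I then introduce the compact submanifold $P:=N(T\cup D)$, a regular neighbourhood of the $2$-complex $T\cup D$. Because $\partial D$ is essential in $T$, a purely local computation identifies $\partial P$ as the disjoint union of $S$ with a torus $T'$ parallel to $T$, and identifies $P$ with the compression body obtained from $T\times[-1,1]$ by attaching a $2$-handle along $\partial D\times\{1\}$; equivalently, $P$ is a solid torus with an open $3$-ball removed, with $\partial_+P=T'$ (which I identify with $T$) and $\partial_-P=S$. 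This gadget $P$ is what makes both the separation statement and the later gluing statements formal. (For simplicity I take $X$ closed; in general $\bar U$ and $\bar V$ carry the remaining boundary of $X$ and nothing changes.)

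For (i) I first argue that $T$ separates $X$. Since $S$ bounds the ball $B$ it is null-homologous mod~$2$; since $S$ and $T'$ together bound $P$, one has $[T]=[T']=[S]=0$ in $H_2(X;\ZZ/2\ZZ)$; and a connected, two-sided, closed surface that vanishes in $\ZZ/2\ZZ$-homology separates the connected manifold $X$. As $T$ is two-sided (being an orientable surface in an orientable manifold), $X\setminus T$ has exactly two components, $U$ and $V$. The interior of $D$ is connected and disjoint from $T$, so it lies in one of them, say $U$; hence $D\subset\bar U$, and after absorbing a collar we may take $P\subset\bar U$ as well.

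Now $S$ is an embedded sphere, hence separates $X$; reading $\bar U=U''\cup_S P$ with $U'':=\overline{\bar U\setminus P}$, and $X=\bar U\cup_T\bar V$, the two sides of $S$ are $U''$ and $Q:=P\cup_T\bar V$. Irreducibility forces one of these to be a $3$-ball, which is then the ball $B$. Since $T\subset Q$ while $U''$ is disjoint from $T$, we get $B=U''$ exactly when $B$ does not contain $T$, and $B=Q$ otherwise — this is precisely the dichotomy of (ii) versus (iii). If $B$ does not contain $T$: then $B=U''\subset\bar U$ and $\bar U=B\cup_S P$; gluing the $3$-ball $B$ onto $\partial_-P=S$ refills the $3$-ball that was removed from a solid torus to make $P$, so $\bar U$ is a solid torus, which is (ii).

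If $B$ contains $T$: then $B=Q=P\cup_T\bar V\supseteq\bar V\supseteq V$, so $B$ contains $V$. Since $P$ is a collar of $T$ with a $2$-handle attached along $\partial D$, the ball $B$ is $\bar V$ with a $2$-handle attached along $\partial D$, with $\partial B=S$; attaching a $3$-handle along $S$ turns $B$ into $S^3$, so $\bar V\cup_{f_0}(S^1\times D^2)\cong S^3$ where $f_0$ sends the meridian to $\partial D$, i.e.\ $\bar V$ is the exterior of the core knot. Finally, $\bar U=U''\cup_S P$ with $U''=X\setminus\Int B$ and $P$ a solid torus with a $3$-ball removed whose meridian is $\partial D$; hence $\bar U$ is $X$ with a $3$-ball replaced by a solid torus minus a ball, so Dehn filling $\bar U$ along any slope meeting $\partial D$ exactly once refills that solid torus into a ball and recovers $X$, producing the desired $f$. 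The one step I expect to need genuine care — as opposed to formal handle bookkeeping and repeated appeals to irreducibility — is the local identification of $\partial N(T\cup D)$ and of $P$ as the asserted compression body; everything downstream is just reshuffling handles.
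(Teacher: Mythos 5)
Your proof is correct. The paper itself offers no argument for this lemma --- it is introduced with ``the following lemma collects some standard results'' and no proof is given --- so there is nothing to compare against except the standard folklore argument, which is exactly what you have written out: identify a regular neighbourhood $P$ of $T\cup D$ as the compression body (solid torus minus an open ball, with meridian $\partial D$), get separation of $T$ from $[T]=[S]=0$ in $H_2(X;\ZZ/2\ZZ)$, and read off both cases from which side of $S$ the given ball $B$ occupies, with the knot-exterior and Dehn-filling claims following from the handle bookkeeping ($2$-handle plus $3$-handle equals a solid torus filled along $\partial D$; filling $\bar U$ along a slope dual to $\partial D$ cancels the $1$-handle of $P\cup W$ and restores the ball $B$). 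The only steps you leave implicit are routine --- uniqueness up to homeomorphism of gluing a $3$-ball along a boundary sphere, and that your two-sided $P$ must be replaced by the one-sided regular neighbourhood of $T\cup D$ inside $\bar U$ before writing $\bar U=U''\cup_S P$ --- and you flag the latter yourself; neither is a gap.
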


\begin{rem}
If $T$ is a component of $\bord X$ and  $T$ is a compressible torus,  the same argument shows that  $X$ is a solid torus.
\end{rem}

\begin{lem}\label{lem:abelien}
Let $X$ be a closed, orientable, irreducible $3$-manifold. Let $\bar H \subset X$ be a connected, compact, orientable, irreducible submanifold of $X$ whose boundary is a collection of compressible tori. 
If $\bar X$ is not homeomorphic to the exterior of a (possibly empty) link in $X$, then $\bar H$ is included in a connected submanifold $Y$ whose boundary is one of the  tori of $\partial \bar H$
and which satisfies one of the following properties:
\begin{itemize}
\item[(i)] $Y$ is a solid torus, or
\item[(ii)] $Y$ is homeomorphic to the exterior of a knot in 
$S^3$ and contained in a ball $B\subset X$.
\end{itemize}
\end{lem}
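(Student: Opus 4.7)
The plan is to apply Lemma~\ref{lem:tore compressible} to each compressible boundary torus $T_j \subset \partial \bar H$. Denote by $A_j$ and $B_j$ the two connected components of $X \setminus T_j$ provided by Lemma~\ref{lem:tore compressible}(i), labeled so that $\mathrm{int}(\bar H) \subset A_j$, and set $N_j := \bar B_j$. Choose a compression disk $D_j$ for $T_j$ in $X$, form the compressed sphere $S_j$, and let $B \subset X$ be the ball bounded by $S_j$. Four cases arise, according to which side of $T_j$ contains $D_j$ and whether $B$ contains $T_j$.

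In two of the four cases one directly obtains the desired $Y := \bar A_j$. If $D_j \subset \bar A_j$ and $B \subset \bar A_j$, then Lemma~\ref{lem:tore compressible}(ii) implies that $\bar A_j$ is a solid torus, yielding conclusion~(i). If $D_j \subset N_j$ and $B \supset A_j$, then Lemma~\ref{lem:tore compressible}(iii) implies that $\bar A_j$ is homeomorphic to a knot complement in $S^3$ and is contained in the ball $B \subset X$, yielding conclusion~(ii). In both cases $\partial Y = T_j$ and $\bar H \subset Y$.

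Assume now that neither of these two cases occurs at any $T_j$. Then every $T_j$ falls into one of the remaining two: either $D_j \subset N_j$ with $B \subset N_j$, whence $N_j$ is a solid torus by Lemma~\ref{lem:tore compressible}(ii); or $D_j \subset \bar A_j$ with $B \supset B_j$, whence $N_j$ is a knot complement in $S^3$ contained in a ball of $X$ and $X \cong \bar A_j \cup_{f_j} (S^1 \times D^2)$ via some gluing $f_j$ by Lemma~\ref{lem:tore compressible}(iii). In either situation $N_j$ can be replaced by a solid torus $\tau_j$ (using $f_j$ in the second case) in a way that preserves the homeomorphism type of the ambient closed manifold. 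Performing all such replacements simultaneously yields a closed manifold $X^{*} \cong X$ in which $\bar H$ sits tautologically as the exterior of the link formed by the cores of the $\tau_j$'s. This contradicts the hypothesis that $\bar H$ is not homeomorphic to the exterior of a link in $X$, so some $T_j$ must fall into one of the first two cases, yielding $Y$.

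The main obstacle is justifying that the simultaneous replacement really gives a manifold homeomorphic to $X$. The natural approach is to perform the replacements one $T_j$ at a time, invoking Lemma~\ref{lem:tore compressible}(iii) at each step. For this one needs each compression disk $D_j$ (in the fourth case) to be disjoint from all other tori $T_k$, which is handled by a standard innermost-circle argument using the irreducibility of $X$ to eliminate inessential intersection circles and the compressibility of the other $T_k$'s to handle essential ones. Once the disks are so isotoped they lie inside $\bar H$, and the compressed balls bounded by the $S_j$'s may be chosen pairwise disjoint, so that the sequential replacements do not interfere and one indeed obtains $X^{*} \cong X$.
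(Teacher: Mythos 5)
Your proposal takes essentially the same route as the paper. The paper's proof first checks whether some $T_j$ bounds a solid torus $\bar A_j$ containing $\bar H$, then asks whether each complementary piece $V_j=N_j$ is a solid torus or a knot exterior contained in a ball, and finally --- when a $V_j$ is neither --- performs the compression to exhibit $\bar A_j$ as a knot exterior in a ball; your four-way split according to which side of $T_j$ contains the compression disk and whether the compressed ball contains $T_j$ is a clean repackaging of exactly those alternatives, with your Cases 1 and 2 giving the two conclusions and Cases 3 and 4 feeding the replacement argument.

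One caveat worth flagging. You correctly identify the point that the paper states without justification --- that all the knot-exterior pieces $N_j$ can be swapped out for solid tori simultaneously without altering $X$ --- and you propose to handle it by isotoping the disks $D_j$ off the remaining tori by innermost-circle moves. But your assertion that ``once the disks are so isotoped they lie inside $\bar H$'' cannot be right as stated: if $\mu_j=\partial D_j$ bounded a disk inside $\bar H$ itself, then $T_j$ would compress in $\bar H$, which (since $\bar H$ is irreducible) forces $\bar H$ to be a solid torus and $m=1$ --- precisely the degenerate case where there are no other tori to worry about. In general the disk $D_j$ must leave $\bar H$, and an innermost subdisk that lies in a solid-torus piece $N_k$ with essential (meridian) boundary cannot be removed by an isotopy. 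The correct statement is weaker (one only needs the compressed ball for a Case-4 piece to be disjoint from the \emph{other} Case-4 pieces, and essential innermost subdisks in those would force them to be solid tori, a contradiction), but the reasoning needs to be reorganized around that. Since the paper itself asserts this replacement step without proof, this is an imprecision rather than a departure from the source; still, the sentence about isotoping the $D_j$ into $\bar H$ should be corrected.
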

 
\begin{proof}
By hypothesis the boundary of $\bar H$  is not empty.
We denote by $T_1,\ldots,T_m$ the components of $\partial \bar H$. If one of them bounds a solid torus containing $\bar H$, we can choose this solid torus as $Y$. 
Henceforth we assume that this is not the case.

Each $T_j$ being compressible, it separates and thus bounds a submanifold $V_j$ not containing
$\bar H$. Up to renumbering the boundary components of $\bar H$, we may assume that
 $V_1,\ldots,V_k$ are solid tori, but not $V_{k+1},\ldots, V_m$.
At least one of the $V_j$'s is not a solid torus, otherwise $\bar H$ would be homeomorphic to the exterior of a link in $X$. 

For the same reason,  at least one 
 $V_j$, for some $j > k$, is not contained in a $3$-ball. Otherwise each of the $V_{k+1},\ldots, V_m$
 is homeomorphic to the exterior of a knot in  $S^3$, by Lemma~\ref{lem:tore compressible}, and one could then replace each
 $V_j$ ,$k+1 \leq j \leq m$ by a solid torus without changing the topological type of  $X$. Hence $\bar H$ would be homeomorphic to the exterior of a link in $X$.

Pick a  $V_j$, for $j > k$,  which is not contained in a ball. Then compressing surgery on the torus
 $T_j = \partial V_j$ yields a sphere $S$ bounding a ball $B$ in $X$, which contains $\bar H$ by the choice of
$V_j$.  This shows that conclusion (ii) is satisfied with
$Y=X\setminus \mathrm{int} V_j$.
\end{proof}

\subsection{Proof of Propositions~\ref{prop:abelien}
and~\ref{prop:existence Wn}}\label{preuve abelien Wn}

\begin{proof}[Proof of~\ref{prop:abelien}]
If Assertion (i) of Proposition~\ref{prop:abelien} is not satisfied, then, up to a subsequence, one may
assume that for all $i \in{1,\ldots, m}$ 
and for all $n$, each component of
$\partial \bar H_n^{i}$ is compressible in $M$.  We fix an integer
$i \in{1,\ldots, m}$. 
From the hypotheses of Theorem~\ref{TH:GRAPH}, we get the inequality
 $\vol(H^i) < V_0(M)$, which implies that $H^i$ is not homeomorphic 
to the complement of a link in $M$. In particular since  $\bar H_n^{i}$ 
is homeomorphic to the compact core of $H^i$, it is not homeomorphic to 
the exterior of a link in
$M$. Lemma~\ref{lem:abelien} allows to conclude that Assertion~(ii) of Proposition~\ref{prop:abelien} holds true.
\end{proof}

\begin{proof}[Proof of~\ref{prop:existence Wn}]
For each $n$ and each $i \in{1,\ldots, m}$, we choose a submanifold $Y^i_n$
containing $\bar H^i_n$,  given by Lemma~\ref{lem:abelien}.
We take $W_n$ to be the union of the $Y^i_n$. Then Assertion~(i) of
Proposition~\ref{prop:existence Wn} is straightforward.

Assume that $M$ is not a lens space. Then $M_n$ cannot be 
the union of two submanifolds  $Y^i_n$  and $Y^j_n$, 
otherwise $M_n$ can be covered either by two solid tori, 
or by a solid torus and a ball or by two balls. In the first case
$M_n$ would be homeomorphic to a lens space by \cite{GGH}, 
while in the other two cases $M_n$ would be covered by three 
balls and thus homeomorphic to the 3-sphere $S^3$ by \cite{HMc}, 
see also \cite{GGH1}. Thus for all $i_1,i_2$, the submanifolds $Y^{i_1}_n$ $Y^{i_2}_n$ are disjoint 
or one contains the other, because they  have disjoint boundaries. 
In this case each component of $W_n$ is homeomorphic to one of the $Y^i_n$. 
This yields Assertions~(ii) and~(iii) of Proposition~\ref{prop:existence Wn}.
\end{proof}

\section{Local structure of the thin part}\label{sec:mince}

In this section, it is implicit that any quantity depending on a point $x \in M_n$ is computed with
respect to the metric $g_n$ on $M_n$ and thus depends also on  $n$.

Let us choose a sequence $\varepsilon_n\to 0$ (see the remark after Proposition~\ref{partie epaisse}).
For all $x\in M_n^-(\varepsilon_n)$, we choose a radius $0<\rho(x)\leq
1$, such that on the ball $B(x,\rho(x))$ the curvature is $\geq -\rho^{-2}(x)$
and the volume of this ball is $< \varepsilon_n \rho^3(x)$.

In the following proposition we use Cheeger-Gromoll's soul theorem  \cite{CheegerGromoll}.

\begin{prop}\label{local structure}	
For all $D>1$ there exists $n_0(D)$ such that if $n> n_0(D)$, then for all
$x\in M_n^-(\varepsilon_n)$ we have the following alternative:
\begin{itemize}
\item[(a)]  Either $M_n$ is $\frac1D$-close to some closed nonnegatively curved $3$-manifold, or
\item[(b)]  there exists a radius $\nu(x)\in (0,\rho(x))$ and a complete noncompact Riemannian $3$-manifold
 $X_x$, with nonnegative sectional curvature and soul  $S_x$, such that the following properties are satisfied:
\begin{itemize}
\item[(1)] $B(x,\nu(x))$ is $\frac1D$-close to a metric ball in $X_x$.
\item[(2)] There exists an approximation $f_{x}\! : \! B(x,\nu(x))\to X_x$ such that
$$\max\{d(f(x),S_x), \diam S_x\}\leq \tfrac{\nu(x)}D.$$
\item[(3)] $\vol (B(x,\nu(x)))\leq \frac 1D\nu^3(x)$.
\end{itemize}
\end{itemize}
\end{prop}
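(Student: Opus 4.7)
The plan is to argue by contradiction, combining a rescaling argument with Hypothesis~(3) of Theorem~\ref{TH:GRAPH} (locally controlled curvature) to extract a smooth pointed $C^2$-limit, then to apply the Cheeger--Gromoll soul theorem to identify its structure. Suppose the conclusion fails for some $D>1$: after passing to a subsequence there exist indices $n\to\infty$ and points $x_n\in M_n^-(\varepsilon_n)$ for which neither (a) nor (b) holds. By definition of the thin part the sectional curvature on $B(x_n,\rho(x_n))$ is $\geq -\rho(x_n)^{-2}$, while $\vol B(x_n,\rho(x_n))<\varepsilon_n\rho(x_n)^3$ with $\varepsilon_n\to 0$.

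The first step is to select an intermediate scale $\nu_n\in(0,\rho(x_n))$. The function $r\mapsto \vol B(x_n,r)/r^3$ tends to the Euclidean value $\omega_3$ as $r\to 0$ (by smoothness of $g_n$) and is $<\varepsilon_n<1/D$ at $r=\rho(x_n)$ for $n$ large; by continuity I can choose $\nu_n$ with $\vol B(x_n,\nu_n)/\nu_n^3 = 1/D$. Monotonicity of $r\mapsto \vol B(x_n,r)$ yields $\nu_n^3/D=\vol B(x_n,\nu_n)\leq \vol B(x_n,\rho(x_n))<\varepsilon_n\rho(x_n)^3$, hence $\nu_n/\rho(x_n)\to 0$. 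Rescaling by $\nu_n^{-2}$ gives a metric $\widetilde g_n=\nu_n^{-2}g_n$ in which the unit ball around $x_n$ has volume $1/D$ and the curvature bound becomes $\widetilde K\geq -(\nu_n/\rho(x_n))^2\to 0$. A Bishop--Gromov comparison combined with the volume lower bound at scale $\nu_n$ shows that at every smaller scale $r\leq \nu_n$ the ratio $\vol B(x_n,r)/r^3$ stays bounded below by a positive constant, so Hypothesis~(3) applies uniformly on balls of radius comparable to $\nu_n$ and yields bounds on $|\Rm|$ and $|\nabla\Rm|$ independent of $n$.

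The Cheeger--Gromov compactness theorem then provides a pointed $C^2$ subsequential limit $(X_\infty,x_\infty)$ of $(M,\widetilde g_n,x_n)$, which is a smooth, complete, nonnegatively curved Riemannian $3$-manifold whose unit ball at $x_\infty$ has volume $1/D$. If $X_\infty$ is compact, then $M_n$ itself is $1/D$-close in $\widetilde g_n$ (hence in $g_n$, after undoing the rescaling) to a closed nonnegatively curved $3$-manifold, contradicting the failure of~(a). Otherwise $X_\infty$ is complete noncompact and the Cheeger--Gromoll soul theorem provides a compact totally geodesic soul $S_\infty\subset X_\infty$. Transporting this structure back through the $C^2$-approximations yields the radius $\nu(x_n)$ (obtained by a slight enlargement of $\nu_n$ so that both $d(f(x_n),S_{x_n})$ and $\diam S_{x_n}$ become small compared to $\nu(x_n)$) together with an approximation $f_{x_n}\colon B(x_n,\nu(x_n))\to X_\infty$ satisfying items~(1)--(3) of~(b), contradicting the failure of~(b). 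The contradiction proves the proposition.

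The main obstacle is the interlocking choice of the scale $\nu(x_n)$: it must lie in a narrow range where the rescaled curvature is nearly non-negative, where the volume ratio is bounded below so that Hypothesis~(3) activates, and where the basepoint is close to the soul while the soul has small diameter, both measured in units of $\nu(x_n)$. The first two requirements fix $\nu(x_n)$ up to a bounded ratio; property~(2) in~(b) demands an additional adjustment, essentially choosing $\nu(x_n)$ as a moderate multiple of $d(x_\infty,S_\infty)+\diam S_\infty$ in the rescaled picture. Since for a complete noncompact nonnegatively curved $3$-manifold the soul is unique up to ambient diffeomorphism and its geometry is controlled once the limit is fixed, this adjustment can be made consistently and yields the required estimate.
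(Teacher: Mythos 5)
Your proposal is correct and follows essentially the same route as the paper: a critical radius defined by a normalised-volume threshold, rescaling, compactness via the locally-controlled-curvature hypothesis, the soul theorem, and an enlargement of the radius checked by Bishop--Gromov. The paper is only more careful on two points you gloss over: it takes the threshold $\varepsilon_0=\tfrac1{1000D}$ rather than $\tfrac1D$ so that the Bishop--Gromov loss incurred when enlarging the radius still leaves property~(3) intact, and it proves a separate statement (Lemma~\ref{lem:truc} and Corollary~\ref{corol:bornecourbure}) upgrading Hypothesis~(3), which a priori bounds $|\Rm|$ only at the \emph{centre} of a ball, to a bound at every point of a slightly smaller ball --- the step actually needed before invoking compactness on $B(\bar x_n,L)$ for each fixed $L$.
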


\begin{rem}
Since $\nu(x)<\rho(x)$, the sectional curvature on $B(x,\nu(x))$ is greater than or equal to
$ -\frac 1{\rho^2(x)}$, which is in turn bounded below by $-\frac 1{\nu^2(x)}$.
\end{rem}

\begin{rem}
The only closed, orientable and irreducible $3$-manifold containing a projective plane is  $RP^3$, which is a graph manifold. Therefore if the manifold $M$ is not homeomorphic to $RP^3$, then the soul $S_x$ can be homeomorphic to a point, a circle, a
$2$-sphere, a $2$-torus or a Klein bottle.  In this case, the ball
$B(x,\nu(x))$ is homeomorphic to $B^3$, $S^1\times D^2$,
$S^2\times I$, $T^2\times I$ or to the twisted $I$-bundle on the Klein bottle.  
\end{rem}

Before starting the proof of this proposition, we prove the following lemma and its consequence:

\begin{lem}\label{lem:truc}
There exists a universal constant $C>0$ such that for all
$\varepsilon > 0$, for all $x\in M_n$, and for all $r>0$, if the ball $B(x,r)$ has volume $\geq \varepsilon\, r^3$ and curvature $\geq
-r^{-2}$, then for all $y\in B(x, \frac13 r)$ and all $0<r'<\frac23
r$, the ball $B(y,r')$ has volume $\geq C\cdot \varepsilon (r')^3$ and curvature  
{$\geq -(r')^{ -2}$}.
\end{lem}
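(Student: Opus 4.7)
The plan is to split the two conclusions. The curvature bound is essentially the triangle inequality: for $y\in B(x,r/3)$ and $r'<2r/3$, any $z\in B(y,r')$ satisfies $d(x,z)\leq d(x,y)+d(y,z)<r/3+2r/3=r$, so $B(y,r')\subset B(x,r)$; hence the sectional curvature on $B(y,r')$ is $\geq -r^{-2}\geq -(r')^{-2}$ since $r'<r$.

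The substance of the lemma is the volume bound, which I would obtain by applying the Bishop--Gromov volume comparison theorem twice, once at $x$ and once at $y$. Write $v(t)=\pi(\sinh(2t)-2t)$ for the volume of the $t$-ball in $\bbH^3$ of curvature $-1$, and $V_\kappa(s)$ for the volume of an $s$-ball in the $3$-dimensional space form of curvature $\kappa$; a rescaling gives $V_{-1/r^2}(s)=r^3\, v(s/r)$. The sectional bound implies $\Ric\geq -2r^{-2}$ on $B(x,r)$, so Bishop--Gromov asserts that $s\mapsto \vol(B(x,s))/V_{-1/r^2}(s)$ is nonincreasing on $(0,r]$. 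Combined with the hypothesis $\vol(B(x,r))\geq \varepsilon r^3$, this gives $\vol(B(x,s))\geq (\varepsilon/v(1))\,r^3\, v(s/r)$ for every $s\leq r$.

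Next I would apply the triangle inequality a second time: for any $y\in B(x,r/3)$ one has $B(x,r/3)\subset B(y,2r/3)\subset B(x,r)$, so evaluating the preceding bound at $s=r/3$ produces $\vol(B(y,2r/3))\geq (v(1/3)/v(1))\,\varepsilon\, r^3$. The same Ricci lower bound holds throughout $B(y,2r/3)\subset B(x,r)$, so applying Bishop--Gromov at $y$ yields, for every $r'<2r/3$,
\[
\vol(B(y,r'))\;\geq\;\frac{V_{-1/r^2}(r')}{V_{-1/r^2}(2r/3)}\cdot\vol(B(y,2r/3))\;\geq\;\frac{v(1/3)}{v(1)\,v(2/3)}\,\varepsilon\, r^3\, v(r'/r).
\]

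To finish I would replace the factor $r^3\, v(r'/r)$ by a multiple of $(r')^3$. The Taylor series $\sinh(2t)-2t=\sum_{k\geq 1}(2t)^{2k+1}/(2k+1)!$ has only positive terms for $t>0$, so $v(t)\geq (4\pi/3)t^3$ for every $t>0$. Taking $t=r'/r$ gives $r^3\, v(r'/r)\geq (4\pi/3)(r')^3$, and the previous inequality becomes $\vol(B(y,r'))\geq C\varepsilon(r')^3$ with the universal constant $C=\tfrac{4\pi}{3}\,v(1/3)/(v(1)\,v(2/3))$. No serious obstacle arises; the only point to monitor at each step is that the curvature lower bound genuinely holds on the ball to which Bishop--Gromov is applied, and the inclusions above take care of this.
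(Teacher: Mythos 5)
Your proof is correct and follows essentially the same route as the paper's: two applications of Bishop--Gromov (one centred at $x$ relating radii $r/3$ and $r$, one centred at $y$ relating radii $r'$ and $2r/3$), linked by the inclusion $B(x,\tfrac13 r)\subset B(y,\tfrac23 r)\subset B(x,r)$, with the curvature bound coming from $B(y,r')\subset B(x,r)$ and $r'<r$. The only difference is the order in which the two comparison steps are written and that you make the universal constant explicit.
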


We use the function $v_{-\kappa^2}(r)$ to denote the volume of the ball of radius $r$ in the $3$-dimensional hyperbolic space with curvature $-\kappa^2$. Notice that $v_{-\kappa^2}(r)=\kappa^{-3} v_{-1}(\kappa\, r)$.

\begin{proof}
The lower bound on the curvature is a consequence of the monotonicity of the function $-r^{-2}$ with respect to $r$. In order to estimate from below the normalised volume we apply Bishop-Gromov's inequality twice. First  to the ball around $y$, increasing the radius $r'$ to $\frac 23 r$: 
$$
\vol (B(y, r'))\geq \vol ( B(y,\tfrac23 r)
)\frac{v_{-r^{-2}}(r')}{v_{-r^{-2}}(\frac23r)}.
$$
Using that 
$ {v_{-r^{-2}}(r')}= r^3 {v_{-1}(\frac{r'}{r})}\geq 
 r^3 \left(\tfrac{r'}{r}\right)^3 C_1
$ for $C_1>0$ uniform, 
${v_{-r^{-2}}(\frac23r)} = r^3 {v_{-1}(\frac23)}$,
 and
that the ball $B(y,\frac23 r)$
contains $B(x,\frac13 r)$, we have
$$
\vol (B(y, r'))\geq \vol( B(x,\tfrac13 r)) \left(\tfrac{r'}{r}\right)^3 C_2.
$$
Applying again the Bishop-Gromov inequality: 
$$
\vol (B(x,\tfrac13 r)) \geq \vol (B(x,r)) \frac{v_{-r^{-2}}(\frac13 r)}{v_{-r^{-2}}(r)}\geq
r^3 \varepsilon\, \frac{v_{-1}(\frac 13)}{v_{-1}(1)}= r^3\varepsilon\, C_3.
$$
Hence  $\vol (B(y, r'))\geq (r' )^3\, \varepsilon \, C_4$.
\end{proof}

We deduce an `improvement' of the controlled curvature in the sense of Perelman, in which the conclusion is valid at each point of some metric ball, not only the centre. The only price
to pay is that the constants can be  slightly different.

\begin{corol}\label{corol:bornecourbure}
For all $\varepsilon>0$ there exists $\bar r'(\varepsilon)>0$, $K'_0(\varepsilon),
K'_1(\varepsilon)$ such that for  $n$ large enough, if $0 < r \leq \bar r'
(\varepsilon)$,  $x\in M_n$ and the ball $B(x,r)$ has volume $\geq \varepsilon \,
r^3$ and
sectional curvatures $\geq - r^{-2}$ then, for all $y\in B(x,\frac13
r)$, 
$\vert \Rm(y)\vert < K'_0 \, r^{-2}$ and $\vert \nabla \Rm(y)\vert < K'_1\, r^{-3}$.
\end{corol}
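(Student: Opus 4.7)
The plan is to reduce the assertion at an arbitrary point $y\in B(x,\tfrac13 r)$ to the locally controlled curvature hypothesis applied at $y$ itself, by using Lemma~\ref{lem:truc} to transport the volume and curvature bounds from the ball around $x$ to a slightly smaller ball around $y$.

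First I fix $\varepsilon>0$ and let $C>0$ denote the universal constant produced by Lemma~\ref{lem:truc}. Set $\varepsilon':=C\varepsilon$, and let $\bar r(\varepsilon')$, $K_0(\varepsilon')$, $K_1(\varepsilon')$ be the quantities given by the hypothesis that $g_n$ has locally controlled curvature in the sense of Perelman. Define
\[
\bar r'(\varepsilon):=2\,\bar r(\varepsilon'),\qquad K'_0(\varepsilon):=4\,K_0(\varepsilon'),\qquad K'_1(\varepsilon):=8\,K_1(\varepsilon').
\]
Now suppose $0<r\leq\bar r'(\varepsilon)$ and $x\in M_n$ satisfy $\vol(B(x,r))\geq\varepsilon r^3$ and $\mathrm{sec}\geq -r^{-2}$ on $B(x,r)$, and let $y\in B(x,\tfrac13 r)$. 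Choose $r':=r/2<\tfrac23 r$; by Lemma~\ref{lem:truc} applied with this radius, the ball $B(y,r')$ satisfies
\[
\vol(B(y,r'))\geq C\varepsilon\,(r')^3=\varepsilon'(r')^3,\qquad \mathrm{sec}_{B(y,r')}\geq -(r')^{-2}.
\]
Moreover $r'=r/2\leq \bar r'(\varepsilon)/2=\bar r(\varepsilon')$, so the pair $(y,r')$ satisfies the hypotheses appearing in the definition of locally controlled curvature with parameter $\varepsilon'$.

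Applying the controlled curvature hypothesis at $y$ with radius $r'$ then yields $|\Rm(y)|<K_0(\varepsilon')(r')^{-2}=4K_0(\varepsilon')r^{-2}=K'_0(\varepsilon)r^{-2}$ and similarly $|\nabla\Rm(y)|<K_1(\varepsilon')(r')^{-3}=8K_1(\varepsilon')r^{-3}=K'_1(\varepsilon)r^{-3}$, which is the desired conclusion. The whole argument is a matter of bookkeeping the constants, and there is no real obstacle: the only thing to check is that the choice $r'=r/2$ fits simultaneously into both the range $(0,\tfrac23 r)$ required by Lemma~\ref{lem:truc} and the range $(0,\bar r(\varepsilon')]$ required by the Perelman-type hypothesis, which is ensured by the definition of $\bar r'(\varepsilon)$ above.
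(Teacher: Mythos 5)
Your proof is correct and follows essentially the same route as the paper: apply Lemma~\ref{lem:truc} to transfer the normalised volume and curvature lower bounds from $B(x,r)$ to a smaller ball centred at $y$, then invoke the locally controlled curvature hypothesis at $y$. The paper's own proof is a one-liner setting $\bar r'(\varepsilon)=\bar r(C\varepsilon)$, $K'_0(\varepsilon)=K_0(C\varepsilon)$, $K'_1(\varepsilon)=K_1(C\varepsilon)$ without tracking the change of scale from $r$ to $r'<\tfrac23 r$; your version is in fact a bit more careful, making the explicit choice $r'=r/2$ and correctly inflating the constants by the factors $4$ and $8$ so that the final bounds are in terms of $r^{-2}$ and $r^{-3}$ rather than $(r')^{-2}$ and $(r')^{-3}$.
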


\begin{proof}
It suffices to apply Lemma~\ref{lem:truc}, setting $\bar r'(\varepsilon) = \bar r(C\epsilon)$,
$K'_0(\varepsilon) = K_0(C\epsilon)$ and $K'_1(\varepsilon) = K_1(C\epsilon)$.
\end{proof}

\begin{proof}[Proof of Proposition \ref{local structure}.]
Let us assume that there exists $D_0>1$ and, after re-indexing, a sequence $x_n\in M_n^-(\varepsilon_n)$ such that neither of the
conclusions of Proposition \ref{local structure} holds with $D=D_0$.
 
Set  $\varepsilon_0:=\frac1{1000 D_0}$. We shall rescale the metrics using the following radii:
\begin{defi} For $x\in M_n$, define
\[
\er(x)=\inf\{ r>0\mid\vol ( B(x,r) )/r^3\leq \,\varepsilon_0 \}.
\]
\end{defi}

We gather in the following lemma some properties which will be useful for the proof:

\begin{lem} 
\label{aff:er}
 \begin{itemize}
\item[(i)] For  $n$ large enough and $x\in M_n^-(\varepsilon_n)$, one has $0<\er(x)<\rho(x)$.  
\item[(ii)] For $n$ sufficiently large and  $x\in M_n^-(\varepsilon_n)$, one has
 $$\frac{\vol(B(x,\er(x)))}{\er(x)^3}=\varepsilon_0.$$
\item[(iii)] For  $L>1$, there exists $n_0(L)$ such that for
$n>n_0(L)$ and for   $x\in M_n^-(\varepsilon_n)$ we have $$L \,\er(x)\leq \rho(x). 
$$
In particular $\lim\limits_{n\to\infty} \er(x_n)= 
\lim\limits_{n\to\infty}\frac{ \er(x_n)}{\rho(x_n)}=0$.
\end{itemize}
\end{lem}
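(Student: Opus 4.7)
The plan is to deduce all three assertions from the definition of $\er(x)$ as an infimum, continuity of the ratio $r\mapsto \vol(B(x,r))/r^3$, and the $\varepsilon_n$-thinness hypothesis; no comparison geometry is needed.

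\textbf{Assertion (i)} has two parts. Positivity $\er(x)>0$ uses only smoothness of $g_n$: as $r\to 0$ the ratio $\vol(B(x,r))/r^3$ tends to the Euclidean value $4\pi/3$, which greatly exceeds $\varepsilon_0=1/(1000D_0)$, so the defining set $\{r>0:\vol(B(x,r))/r^3\le\varepsilon_0\}$ stays bounded away from $0$. For $\er(x)<\rho(x)$, $\varepsilon_n$-thinness gives $\vol(B(x,\rho(x)))/\rho(x)^3<\varepsilon_n$; once $n$ is large enough that $\varepsilon_n<\varepsilon_0$, the ratio at $\rho(x)$ is strictly below $\varepsilon_0$, so by continuity some $r<\rho(x)$ still lies in the defining set, forcing $\er(x)<\rho(x)$.

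\textbf{Assertion (ii)} is then immediate: the defining set is closed by continuity, so $\er(x)$ lies in it, giving $\vol(B(x,\er(x)))/\er(x)^3\le\varepsilon_0$; and for every $r<\er(x)$ the ratio exceeds $\varepsilon_0$, so continuity yields the reverse inequality $\ge\varepsilon_0$, hence equality.

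\textbf{Assertion (iii)} is the only genuine inequality. The key observation is the trivial inclusion $B(x,\er(x))\subset B(x,\rho(x))$ coming from (i), which combined with (ii) and thinness yields
$$\varepsilon_0\,\er(x)^3 \;=\; \vol(B(x,\er(x))) \;\le\; \vol(B(x,\rho(x))) \;<\; \varepsilon_n\,\rho(x)^3,$$
so $\rho(x)/\er(x)>(\varepsilon_0/\varepsilon_n)^{1/3}$ uniformly in $x\in M_n^-(\varepsilon_n)$. Since $\varepsilon_n\to 0$, for every fixed $L>1$ we have $(\varepsilon_0/\varepsilon_n)^{1/3}\ge L$ for all $n\ge n_0(L)$, establishing the uniform bound $L\er(x)\le\rho(x)$. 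Applied to $x=x_n$ with $\rho(x_n)\le 1$, this gives $\er(x_n)\le 1/L$ and $\er(x_n)/\rho(x_n)\le 1/L$ for $n$ large; letting $L\to\infty$ proves both limits. The one temptation to resist here is invoking Bishop--Gromov, which only produces upper bounds on volume growth and hence runs in the wrong direction; the argument above sidesteps this by directly comparing the pinned volume at the small scale $\er(x)$ with the shrinking thinness bound at the large scale $\rho(x)$.
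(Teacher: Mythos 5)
Your proof is correct and takes essentially the same route as the paper: (i) and (ii) follow from continuity of $r\mapsto\vol(B(x,r))/r^3$, comparing the Euclidean limit $\tfrac43\pi$ with the thinness bound, and your chain $\varepsilon_0\,\er(x)^3=\vol(B(x,\er(x)))\le\vol(B(x,\rho(x)))<\varepsilon_n\rho(x)^3$ is precisely the content of the paper's argument with the auxiliary function $f_x(s)$, namely $f_x(s)\ge\varepsilon_0/s^3$ evaluated at $s=\rho(x)/\er(x)$ against $\varepsilon_n<\varepsilon_0/L^3$. Nothing is missing.
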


\begin{proof}[Proof of Lemma~\ref{aff:er}]
Property (i) follows from continuity, by comparing the limit
 $\vol(B(x,\delta))/\delta^3\to \frac43\pi$ when $\delta\to 0$
with $$\vol(B(x,\rho(x)))/\rho(x)^3 < \varepsilon_n\to 0.$$
Assertion (ii) is also proved by continuity.

We prove (iii)  for $L > 1$ using the function
$$
f_x(s)=\frac{\vol(B(x,s\er(x)))}{(s\er(x))^3}.
$$
 One has
$f_x(1)=\varepsilon_0$; for all $s\in[1, L]$, $f_x(s)\geq \frac{\varepsilon_0}{s^3}
\geq 
\frac{\varepsilon_0}{L^3}$. Furthermore, for $s < 1$, $f_x(s) >\varepsilon_0$ by the definition of  $\er(x)$. It suffices then to choose 
$n_0$ so that for all  $n\geq n_0$ one has
$\varepsilon_n < \frac{\varepsilon_0}{L^3}$.
\end{proof}

\medskip

\begin{rem}
 For $n$ large enough, from the preceding Lemma,
we have $\er(x_n)<\bar r'(\varepsilon_0)$.
\end{rem}

\begin{corol}\label{corol:util} There exists a constant $C>0$ such that any sequence of $x_n\in M_n$ satisfies
$$
\frac{\inj(x_n)}{\er(x_n)}\geq C
$$
for $n$ large enough.
\end{corol}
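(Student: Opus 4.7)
My plan is to argue by contradiction: assume that, after passing to a subsequence, $\inj(x_n)/\er(x_n) \to 0$. The idea is to rescale the metrics so that the injectivity radius at $x_n$ becomes unit-sized and the curvature collapses, then extract a flat Riemannian limit whose geometric properties will be inconsistent.

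The first step is to set $\hat g_n := \inj(x_n)^{-2} g_n$, so $\inj(x_n, \hat g_n) = 1$. To control the curvature in this rescaling I would apply Corollary~\ref{corol:bornecourbure} at $x_n$: by Lemma~\ref{aff:er}(ii)--(iii), for $n$ large one has $\er(x_n) \leq \bar r'(\varepsilon_0)$, $\vol(B(x_n,\er(x_n)))/\er(x_n)^3 = \varepsilon_0$, and sectional curvatures $\geq -\er(x_n)^{-2}$ on this ball. The corollary then gives $|\mathrm{Rm}_{g_n}| \leq K_0' \er(x_n)^{-2}$ on $B(x_n,\er(x_n)/3)_{g_n}$, so after rescaling
\[
|\mathrm{Rm}_{\hat g_n}| \leq K_0' \bigl(\inj(x_n)/\er(x_n)\bigr)^2 \longrightarrow 0
\]
on a $\hat g_n$-ball of radius $\er(x_n)/(3\inj(x_n)) \to \infty$. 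Iterating the same argument at nearby points, using Lemma~\ref{lem:truc} to propagate the volume non-collapsing $\vol(B(y,r'))\geq C\varepsilon_0 (r')^3$ to $y\in B(x_n,\er(x_n)/3)_{g_n}$, would extend this curvature bound to $\hat g_n$-balls of arbitrarily large radius around $x_n$.

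The second step is to pass to a limit. By the definition of $\er(x_n)$, one has $\vol(B(x_n,s))_{\hat g_n} \geq \varepsilon_0 s^3$ for every $s < \er(x_n)/\inj(x_n)$, hence for any fixed $s$ once $n$ is large. Combined with the vanishing curvature, Cheeger's lemma delivers uniform lower bounds on $\inj$ on every fixed $\hat g_n$-ball around $x_n$, so pointed Cheeger--Gromov compactness produces a subsequential smooth limit $(M_n,\hat g_n, x_n) \to (M_\infty, g_\infty, x_\infty)$, where $g_\infty$ is a complete flat metric, $\inj(x_\infty) = 1$ (by convergence of the geodesic loops realising $\inj(x_n)$ and lower semicontinuity), and $\vol(B(x_\infty,s)) \geq \varepsilon_0 s^3$ for every $s > 0$.

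Finally I would derive the contradiction using the Bieberbach classification: $M_\infty = \mathbf{R}^3/\Gamma$ for a discrete group of Euclidean isometries acting freely. Every finite-order element of the Euclidean group is conjugate to an element of $O(3)$ and hence has a fixed point, so free action forces every non-identity element of $\Gamma$ to have infinite order; if $\Gamma$ were nontrivial, Bieberbach would provide a normal translation subgroup of positive rank $k$, and the resulting $\mathbf{R}^{3-k}\times T^k$-finite-cover would force $\vol(B(x_\infty,s))$ to grow at most like $s^{3-k}\leq s^2$ for large $s$, contradicting the cubic lower bound. Hence $\Gamma$ is trivial, $M_\infty = \mathbf{R}^3$, and $\inj(x_\infty) = \infty$, contradicting $\inj(x_\infty) = 1$. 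The main technical obstacle I anticipate is extending the curvature bound to arbitrarily large $\hat g_n$-balls centred at $x_n$ (not just to a fixed fraction of $\er(x_n)/\inj(x_n)$), since naively Corollary~\ref{corol:bornecourbure} only gives bounds on a ball of radius a third of the rescaled $\er$; this requires iterating the corollary at a chain of basepoints whose volume-noncollapsing is guaranteed by Lemma~\ref{lem:truc}.
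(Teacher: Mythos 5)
Your proof is correct, but it takes a genuinely different and considerably heavier route than the paper's. The paper argues directly: rescale by $\er(x_n)^{-2}$ so that $\er$ becomes $1$; then the ball of radius $\tfrac13$ about the rescaled basepoint has volume bounded below by $\varepsilon_0 C_0$ (via Bishop--Gromov and Lemma~\ref{aff:er}(ii)) and curvature bounded above by $K_0'(\varepsilon_0)$ (via Corollary~\ref{corol:bornecourbure}), and Cheeger's propeller lemma~\cite[Thm.\ 5.8]{CheegerEbin} then immediately gives a uniform lower bound on the injectivity radius at the centre. You instead run a blow-up contradiction: rescale by $\inj(x_n)^{-2}$, note that curvature goes to zero on balls of diverging radius while volume non-collapsing persists, extract a complete flat limit $\mathbf{R}^3/\Gamma$ with $\inj(x_\infty)\leq 1$ and Euclidean volume growth, and invoke Bieberbach to force $\Gamma$ trivial, i.e.\ $M_\infty=\mathbf{R}^3$ with $\inj=\infty$, a contradiction. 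This works, but it replaces one application of a local injectivity-radius estimate with Cheeger--Gromov compactness, injectivity-radius lower semicontinuity, and the classification of flat manifolds. One small remark: the ``technical obstacle'' you anticipate about needing to iterate Corollary~\ref{corol:bornecourbure} to reach large $\hat g_n$-balls is not actually there --- a single application gives the curvature bound on $B(x_n,\er(x_n)/3)$, whose $\hat g_n$-radius $\er(x_n)/(3\inj(x_n))$ already tends to infinity under your contradiction hypothesis, so no chaining of basepoints is required.
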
 

\begin{proof}
 Let us first remark that, since $\er(x_n)<\rho(x_n)$, the sectional curvatures on $B(x_n,\er(x_n))$ are $\geq
-\frac1{\rho(x_n)^2}>-\frac{1}{\er(x_n)^2}$. Moreover, as $\er(x_n)<
\bar r'(\varepsilon_0)$, Corollary~\ref{corol:bornecourbure}
shows that the curvature on the ball $B(x_n,\frac{\er(x_n)}3)$ is bounded above by $K'_0(\varepsilon_0)/\er(x_n)^2$. 
This rescaled ball
$$
\frac{1}{\er(x_n)}B(x_n,\tfrac13 \er (x_n))
$$
has radius $\leq 1$, volume $\geq \varepsilon_0\, C_0$ 
(where $C_0$ is a universal constant coming from Bishop-Gromov) and curvatures
$\leq K'_0(\varepsilon_0)$.
Using Cheeger's propeller lemma~\cite[Thm.\  5.8]{CheegerEbin}, the injectivity radius at the centre of the rescaled ball 
is bounded below by some constant $C>0$. This proves Corollary~\ref{corol:util}.
\end{proof}

Having proved Lemma~\ref{aff:er} and its corollary, we continue the proof of 
Proposition~\ref{local structure}.
Let us consider the rescaled manifold $\overline
 M_n=\frac1{\er(x_n)} M_n$. We look for a limit of the sequence
 $(\overline M_n,\bar x_n)$, where $\bar x_n$ is the image of $x_n$.  
The ball  $B(\bar x_n,\frac{\rho(x_n)}{\er(x_n)})\subset \overline M_n$ 
has sectional curvature bounded below by $-\left(\frac{\er(x_n)}{\rho(x_n)}\right)^2$,
which goes to $0$ when
 $n\to\infty$, as follows from Assertion~(iii) of Lemma~\ref{aff:er}.
 
Given $L>0$, the ball $B(\bar x_n, 3L)$ is obtained by rescaling the ball $B(x_n, 3 L\er(x_n))$. Since 
$3L\er(x_n) < \rho(x_n)$, 
 the sectional curvature on
$B(x_n,3 L\er(x_n))$ is $\geq -\frac{1}{\rho(x_n)^2}\geq -\frac{1}{(3L\er(x_n))^2}$. Moreover, we have
 
$$\frac{\vol( B(x_n,3 L\er(x_n))}{(3 L\er(x_n))^3} \geq \frac{\varepsilon_0}{(3L)^3}.$$ 

By applying Corollary ~\ref{corol:bornecourbure} for $n$ sufficiently large so that  we have 
$3 L\er(x_n)\leq   \bar r'(\frac{\varepsilon_0}{(3L)^3})$, one gets that the curvature 
is locally controlled in the sense of Perelman at each point of the ball $B(x_n, L\er(x_n))$. 
Therefore 
the curvature and its first derivative can be bounded above on any  
ball $B(\bar x_n, L)\subset \overline M_n$ with a given radius $L >0$. 

Since the injectivity radius of the basepoint $x_n$ is bounded below along the sequence, 
this upper bound on the curvature allows to use Gromov's compactness
theorem~\cite[Chap. 8, Thm.\  8.28]{GLP}, 
\cite{peters:compactness} and its versions with regularised limit \cite[Thm.\  2.3]{hamilton:compactness} or \cite[Thm.\  4.1 and  5.10]{fukaya:convergence}. It follows that the pointed sequence $(\overline M_n,\bar x_n)$ subconverges in the $\mathcal C^2$-topology towards a 3-dimensional smooth manifold $(\overline X_{\infty},x_{\infty})$, with a complete riemannian metric of class $\mathcal C^2$ with nonnegative sectional curvature. This limit manifold cannot be closed, because that would
contradict the assumption that the conclusion of Proposition~{\ref{local structure} does not hold.

Hence $\overline X_{\infty}$ is not compact. Let $\overline{S}$ be its soul.	Let us choose
$$
      \nu(x_n)= L\er(x_n)\qquad\textrm{where\ } L\geq 2\,\diam
   (\overline S\cup \{x_{\infty}\})\, D_0.
$$
for $n$ large  (to be specified later) we set 
$$
X_{x_{n}}=\er(x_n) \overline X_{\infty},
\quad\textrm{ and }\quad S_{x_{n}}=\er(x_n)\overline S.
$$ 
We then have
$$
\diam(S_{x_{n}})=\er(x_n)\diam (\overline{S}) < \nu(x_n)/D_0.
$$

Let $\bar f_n\!: B(\bar x_n,L )\to  (\overline X_{\infty},x_\infty)$ be a
$\delta_n$-approximation, where $\delta_n$ is a sequence going to $0$.
After rescaling $f_n\! : B(x_n, L \er(x_n) )\to X_{x_{n}}$ is
also a  $\delta_n$-approximation. We get:
\begin{multline*}
 d(f_n(x_n),S_{x_{n}})= \er(x_n)\, d(\bar f_n(\bar x_n),\overline S) 
\\
\le
\er(x_n)\big( d(\bar f_n(\bar x_n),\bar x_\infty)+ d(\bar
x_\infty,\overline S)\big) \\ \leq \er(x_n) \delta_n+
\frac{\nu(x_n)}{2 D_0}\leq \frac{\nu(x_n)}{D_0}.
\end{multline*}

This proves assertion~(2) of Proposition~\ref{local structure}.

Using the fact that $\nu(x_n)=L \er(x_n)<\rho(x_n)$, the curvature on $B(x_n,\nu(x_n))$ is $\geq -1/\nu(x_n)^2$, $L>1$ and the
Bishop-Gromov inequality, we get:
\begin{multline*}
\frac{\vol(B(x_n,\nu(x_n)))}{v_{-\frac{1}{\nu^2(x_n)}}(\nu(x_n))}
\leq
\frac{\vol(B(x_n,\er(x_n)))}{v_{-\frac{1}{\nu^2(x_n)}}(\er(x_n))}=
\varepsilon_0 \frac{\er(x_n)^3}{v_{-\frac{1}{\nu^2(x_n)}}(\er(x_n))}
= 
\\
\varepsilon_0 \left(\frac{\er(x_n)}{\nu(x_n)}\right)^3 \frac{1}{v_{-1}(\frac{\er(x_n)}{\nu(x_n)})}
= \varepsilon_0 \frac{1}{L^3} \frac{1}{v_{-1}(\frac{1}{L})}.
\end{multline*}
Taking now $L$ sufficiently large, we find that:
$$
\vol(B(x_n,\nu(x_n)))\leq \varepsilon_0 \frac{1}{L^3} \frac{v_{-1}(1)}{v_{-1}(\frac{1}{L})} \nu^3(x_n)
\leq 1000\,\varepsilon_0 \,\nu^3(x_n)= \frac{1}{D_0} \nu^3(x_n),
$$
where the last equality comes from the definition of $\varepsilon_0$.

Hence we get the contradiction required to conclude the proof of Proposition~\ref{local structure}.}
\end{proof}

\section{Constructions of  coverings}\label{sec:preuve}

\subsection{Embedding thick pieces in solid tori}

We begin by making some reductions for the proof of Theorem~\ref{TH:GRAPH}.

If case (a) of Proposition~\ref{local structure} occurs, then $M$ is a closed, orientable, irreducible
$3$-manifold admitting a metric of nonnegative
sectional curvature.	By
	\cite{Hamilton3,Hamilton4}, $M$ is spherical or Euclidean, hence a graph manifold.
	Therefore we may assume that all local models are noncompact.
	
	 For the same reasons, since lens spaces are graph manifolds,  we can also assume that $M$ is not homeomorphic to a lens space, and in particular does not contain a projective plane. 

If there exists an integer $i_0 \in{1,\ldots, m}$ such that, up to a subsequence,
$\partial \bar H_n^{i}$ contains an incompressible torus for all $n$, then Theorem~\ref{TH:GRAPH} is proved. We thus assume that for all
$i \in{1,\ldots, m}$ and for all $n$, each component of
$\partial \bar H_n^{i}$ is compressible in $M_n$ and thus  Propositions~\ref{prop:abelien} 
and~\ref{prop:existence Wn}  apply and give for each $n$ a submanifold $W_n$.


Assume that there exists a component $X$ of $W_n$ which is not a solid torus. 
From Proposition~\ref{prop:existence Wn}(ii), $X$ is a knot exterior and contained in a $3$-ball $B\subset M_n$.
By Lemma~\ref{lem:tore compressible}, it is possible to replace 
 $X$ by a solid torus $Y$ without changing the global topology.
Let us denote by  $M'_n$ the manifold thus obtained.  We can endow 
$M'_n$ with a Riemannian metric $g'_n$, equal to $g_n$ away from  $Y$ and such that 
an arbitrarily large  collar neighbourhood of $\bord Y$ in  $Y$ is isometric to a collar neighbourhood
$\bord X$ in $X$. When $n$ is large, this neighbourhood is thus almost isometric
to a long piece of a hyperbolic cusp, and this geometric property will be sufficient for our covering arguments.

Repeating this construction for each component of $W_n$ which is not a solid torus, we obtain a Riemannian manifold
$(M''_n,g''_n)$ together with a submanifold $W''_n$ satisfying the following properties:

\begin{itemize}
\item[(i)] $M''_n$ is homeomorphic to $M_n$.
\item[(ii)] $M''_n\setminus W''_n$ is equal to $M_n\setminus W_n$ and the metrics $g_n$
and $g''_n$ coincide on this set.
\item[(iii)] $M''_n\setminus W''_n=M_n\setminus W_n$ is $\varepsilon_n$-thin.
\item[(iv)] When $n$ goes to infinity, there exists a collar neighbourhood of $\bord W''_n$ in $W''_n$
of arbitrarily large diameter isometric to the corresponding neighbourhood in $W_n$.
\item[(v)] Each component of $W''_n$ is a solid torus.
\end{itemize}

For simplicity, we use the notation  $M_n$, $g_n$, $W_n$ instead of $M''_n$, $g''_n$, $W''_n$. 
This amounts to assuming in the conclusion of Proposition~\ref{prop:existence Wn} that all components of  $W_n$ are solid tori.

\subsection{Existence of a homotopically non\-tri\-vial o\-pen set}
We say that an arcwise connected set $U \subset M$ is
\bydef{homotopically trivial } (in $M$) 
if the image of the  homomorphism 
$\pi_1(U)\to \pi_1(M)$
is trivial. 
More generally, we say that the subset $U\subset M$
is homotopically trivial if all its arcwise connected components have this property.

We recall that the  \bydef{dimension} of a finite covering $\{ U_i \}_{i}$ of $M$ is the 
dimension of its nerve, hence the dimension plus one equals
the maximal number of $U_i$'s containing a given point.

\begin{prop} 
\label{prop:notrivialpi1}
There exists $D_0>0$ such that for all $D>D_0$, for every $n$ greater than or equal
to the number $n_0(D)$ given by Proposition~\ref{local structure}, one of the following
assertions is true:
\begin{itemize}
\item[(a)]  some connected component of $W_n$ is not homotopically trivial, or
\item[(b)]  there exists $x\in {M_n}\setminus \mathrm{int} (W_n)$ such that the image
of \break $\pi_1(B(x,\nu(x)))\to\pi_1(M_n)$
is not homotopically trivial.
\end{itemize}
\end{prop}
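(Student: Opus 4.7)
The plan is a proof by contradiction. Suppose that for some large $D$ and some $n \geq n_0(D)$ both (a) and (b) fail, so every component of $W_n$ is homotopically trivial in $M_n$ and, for each $x \in M_n \setminus \mathrm{int}(W_n)$, the inclusion $B(x,\nu(x)) \hookrightarrow M_n$ induces the zero map on $\pi_1$. I would aim to build a covering of $M_n$ of dimension at most $2$ whose members are all homotopically trivial, and then invoke the theorem of G\'omez-Larra\~naga and Gonz\'alez-Acu\~na~\cite{GG}, which forces $\pi_1(M_n)=1$, contradicting the hypothesis of Theorem~\ref{TH:GRAPH}.

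First I would build a raw cover. On the thick side, take slightly thickened neighbourhoods $\widetilde{W}_n^j$ of each connected component $W_n^j$ of $W_n$ (which are solid tori by the reductions preceding the statement); these remain homotopically trivial by hypothesis and are pairwise disjoint. On the thin side, use the local model balls $B(x,\nu(x))$ given by Proposition~\ref{local structure} for each $x \in M_n \setminus \mathrm{int}(W_n)$. Together these cover $M_n$.

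The heart of the argument is to extract a finite subcover of multiplicity at most $3$ (i.e.\ nerve dimension $\le 2$). I would proceed via a Vitali-type selection: pick a maximal collection of centres $x_\alpha$ in $M_n\setminus\mathrm{int}(W_n)$ for which the reduced balls $B(x_\alpha,\nu(x_\alpha)/5)$ are pairwise disjoint, so that the $B(x_\alpha,\nu(x_\alpha))$ still cover the thin part. To bound multiplicity, suppose $k$ balls $B(x_{\alpha_i},\nu(x_{\alpha_i}))$ all meet a given point $y$. After establishing a two-sided comparison $\nu(x_{\alpha_i})\asymp \nu(x_{\alpha_j})$ whenever $B(x_{\alpha_i},\nu(x_{\alpha_i}))\cap B(x_{\alpha_j},\nu(x_{\alpha_j}))\neq\emptyset$ (which follows from the structure in Proposition~\ref{local structure} and the definition of $\er$ in Lemma~\ref{aff:er}), the $k$ disjoint balls $B(x_{\alpha_i},\nu(x_{\alpha_i})/5)$ all lie inside a single controlled ball $B(y,R)$. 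Using Bishop--Gromov on the curvature-bounded ball $B(y,R)$ together with the volume bound $\vol B(x_{\alpha_i},\nu(x_{\alpha_i}))\le D^{-1}\nu(x_{\alpha_i})^3$ from Proposition~\ref{local structure}(3), summing gives a bound of the form $k\le C/D$ in normalised volume, where $C$ is universal; taking $D>D_0$ sufficiently large forces $k\le 3$. The pieces $\widetilde{W}_n^j$ are disjoint from one another, and when the inflation is chosen small enough they contribute at most one extra set at any point of a thin ball, preserving the multiplicity bound $\le 3$.

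The main obstacle I expect is the multiplicity estimate. A Vitali selection cheaply produces some finite multiplicity, but shaving it down to exactly $3$ (so that the nerve is $2$-dimensional rather than $3$-dimensional) requires genuinely using the smallness factor $1/D$ of the normalised volumes in Proposition~\ref{local structure}(3). The comparison between $\nu(x)$ and $\nu(y)$ on overlapping balls — needed to move from Bishop--Gromov estimates at one base point to another — is the technical pivot, and must be derived from the rescaled-limit description of the thin part before the covering computation can close.
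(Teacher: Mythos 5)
Your overall framework matches the paper's: argue by contradiction, assume every component of $W_n$ and every local model ball is homotopically trivial, construct a covering of $M_n$ of dimension at most $2$ by homotopically trivial open sets, and then invoke Corollary~\ref{cor:simplyconnected} to conclude $\pi_1(M_n)=1$, contradicting the hypothesis. The discrepancy is in the step that produces the dimension bound, and there the argument as written does not work.

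A Vitali-type selection with a Bishop--Gromov packing estimate cannot yield a cover of multiplicity at most $3$. The packing bound controls the number $k$ of disjoint balls $B(x_{\alpha_i},\nu(x_{\alpha_i})/5)$ inside a controlled ball $B(y,R)$ by the ratio $\vol(B(y,R))/\min_i \vol(B(x_{\alpha_i},\nu(x_{\alpha_i})/5))$, and by relative volume comparison based at one of the $x_{\alpha_i}$ this ratio depends only on the ratio of the radii and on the rescaled curvature bound, both of which are universal. The small-volume factor $1/D$ \emph{cancels}, because both numerator and denominator are volumes of balls in the same thin region; if anything, shrinking volumes lets you pack \emph{more} disjoint small balls inside $B(y,R)$, not fewer, so no choice of $D$ can force $k\leq 3$. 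What one actually obtains is the universal bound $N$ of the paper's lemma on the number of $V_i$ meeting a given $V_k$ (which appears just before Lemma~\ref{lem:boundlip}), and $N$ is not a priori small. Your claimed inequality $k\leq C/D$ has the $D$-dependence on the wrong side.

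The paper resolves this by a genuinely different mechanism. It accepts the cover $\{V_i\}$ whose nerve $K$ has dimension $\leq N$, constructs a partition-of-unity map $f\colon M_n \to K$ whose restriction to $V_k$ is $L/r_k$-Lipschitz (Lemma~\ref{lem:boundlip}), and then uses the $1/D$ smallness not in a packing argument but to bound the volume of the \emph{image} of this Lipschitz map: $\vol(f^{(3)}(V_k)) \leq (\hat L/r_k)^3\,\vol(B(x_k,r_k)) \leq C\hat L^3/D$ by Lemma~\ref{lem:volsubcubic}. Taking $D$ large makes this less than the volume of a $3$-simplex, so the image of $f^{(3)}$ misses an interior point of every $3$-simplex, and a radial projection pushes $f^{(3)}$ into the $2$-skeleton $K^{(2)}$ (Lemma~\ref{lem:2skeleton}), after first descending from $K^{(N)}$ to $K^{(3)}$ by repeated application of Lemma~\ref{lem:homotopelowerskel}. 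The preimages of the open stars of the vertices of $K^{(2)}$ then furnish the dimension-$2$ covering by homotopically trivial sets. This Gromov-style step --- pushing a Lipschitz map into a lower skeleton by controlling the volume of its image --- is the content your packing argument would need to be replaced by.
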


In \cite{GG}  J.C. G\'omez-Larra\~naga and F. Gonz\'alez-Acu\~na have computed the $1$-dimensional Lusternik-Schnirelmann category of a closed 3-manifold. One step of their proof  gives the following proposition (cf.~\cite[Proof of Prop.\ 2.1]{GG}:)

\begin{prop}\label{prop:nerf} Let $X$ be a closed, connected 3-manifold. If $X$ has a covering of dimension 2 by open subsets which are homotopically trivial in $X$, then there is a connected $2$-dimensional complex $K$ and a continuous map $f: X \to K$ such that the induced homomorphism $f_{\star}: \pi_1(X) \to \pi_1(K)$ is an isomorphism. \qed
\end{prop}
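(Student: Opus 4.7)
The plan is to take $K$ to be the nerve $N(\mathcal{U})$ of the given covering $\mathcal{U} = \{U_i\}$, together with the canonical nerve map $f \colon X \to K$ associated to a partition of unity. After replacing each $U_i$ by its path components, we may assume every $U_i$ is path-connected, and the homotopical triviality in $X$ is preserved. By the hypothesis on the dimension of the covering, $K$ is a simplicial complex of dimension at most two, and it is connected because $X$ is. Choosing a partition of unity $\{\psi_i\}$ subordinate to $\mathcal{U}$, one defines $f(x) = \sum_i \psi_i(x)\, v_i$ in barycentric coordinates inside the simplex of $K$ spanned by those $v_i$ with $\psi_i(x) > 0$; continuity is clear because each $\psi_i$ is supported in $U_i$.

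For surjectivity of $f_\star$, I would represent a class in $\pi_1(K)$ by an edge loop $v_{i_0} v_{i_1} \cdots v_{i_k} v_{i_0}$ in the $1$-skeleton. Each consecutive pair $v_{i_j}, v_{i_{j+1}}$ spans an edge of $K$, so $U_{i_j} \cap U_{i_{j+1}} \neq \emptyset$; pick a point $x_j$ in this intersection and use path-connectedness to choose a path $\alpha_j$ in $U_{i_j}$ from $x_{j-1}$ to $x_j$. The concatenation of the $\alpha_j$'s is a loop $\gamma$ in $X$, and a straight-line homotopy within each simplex of $K$ shows that $f\circ\gamma$ is homotopic in $K$ to the original edge loop, so $f_\star[\gamma]$ is the prescribed class.

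For injectivity, suppose $\gamma$ is a loop in $X$ with $f\circ\gamma$ null-homotopic in $K$ via some $H \colon D^2 \to K$. The idea is to triangulate $D^2$ finely, apply simplicial approximation so that $H$ is simplicial, and then construct a lift $\widetilde H \colon D^2 \to X$ extending $\gamma$ on $\partial D^2$, inductively over the skeleta: on $0$-cells, pick preimages in the appropriate $U_i$ (where $v_i = H(\text{vertex})$); on $1$-cells, join consecutive chosen points by paths lying in a common $U_i$, using path-connectedness; on $2$-cells, fill in a disc in $X$ whose boundary is a loop lying in a single $U_i$, using the hypothesis that $\pi_1(U_i) \to \pi_1(X)$ is trivial. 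Because $K$ has dimension at most two, no higher-dimensional cells of the triangulation ever need to be filled.

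The main obstacle is the $2$-cell extension step in the injectivity argument: the simplicial approximation and the choice of vertex lifts must be organized so that for each $2$-simplex $\sigma$ of the triangulation of $D^2$, the three vertex lifts and the three edge paths all lie in one $U_i$ corresponding to a vertex of the target simplex $H(\sigma)$ in $K$. This is the precise point at which the three hypotheses---the nerve having dimension at most two (so the inductive lifting terminates), each $U_i$ being homotopically trivial in $X$ (so boundaries of filled $2$-cells contract in $X$), and $X$ being connected (so $K$ is connected and the base points can be aligned)---all come into play simultaneously.
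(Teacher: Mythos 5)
The paper does not actually give a proof of this proposition; it is stated with a terminal \qed and a pointer to~\cite[Proof of Prop.\ 2.1]{GG}, so there is no in-text argument to compare against line by line. Your overall plan (take $K$ to be the nerve of the cover, $f$ the canonical partition-of-unity map, then argue surjectivity by filling edge loops and injectivity by lifting a simplicial nullhomotopy over skeleta) is certainly in the spirit of the cited reference, and the surjectivity sketch can be made to work: for $y\in U_i$ the set $\{j:\psi_j(y)>0\}\cup\{i\}$ spans a simplex of $K$, so the straight-line homotopy pushing $f\circ\alpha_j$ to the vertex $v_{i_j}$ is well defined, and after a basepoint adjustment one recovers the given edge loop.

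The injectivity step, however, contains a genuine gap, and it is exactly the one you flag as ``the main obstacle'' in your last paragraph without resolving it. For a $2$-simplex $\sigma=[w_1,w_2,w_3]$ of the triangulated disc with $H(\sigma)=[v_a,v_b,v_c]$ nondegenerate, the three edges of $\partial\sigma$ map to the three edges $[v_a,v_b]$, $[v_b,v_c]$, $[v_c,v_a]$ of $K$. Any choice of ``designated'' index for each edge must be a vertex of that edge, hence lies in $\{a,b\}$, $\{b,c\}$, $\{c,a\}$ respectively, and no single index is common to all three. So the lifted boundary loop cannot be routed into one $U_i$ merely by clever choices of vertex and edge lifts; one is forced to break $\widetilde H(\partial\sigma)$ into sub-loops lying in the individual $U_a,U_b,U_c$, and the natural way to do this requires the points $\widetilde H(w_j)$ to be joinable inside the \emph{double intersections} $U_a\cap U_b$, etc.\ — a path-connectedness hypothesis on intersections that is not part of the statement and that replacing the $U_i$ by their components does not supply. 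Moreover, the vertex lifts are shared among adjacent $2$-simplices of $D^2$ mapping to different $2$-simplices of $K$, so a local fix for one $\sigma$ typically breaks a neighbour. In short, the assertion ``fill in a disc in $X$ whose boundary is a loop lying in a single $U_i$'' is not justified by the preceding construction, and your closing paragraph names this difficulty but does not give an argument overcoming it. That missing argument is precisely the nontrivial content of~\cite[Prop.\ 2.1]{GG}, so the proposal as written is incomplete at the decisive step.
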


Standard homological arguments show the following, cf.\ \cite[\S 3]{GG}:

\begin{corol}\label{cor:simplyconnected} Let $X$ be a closed, connected, orientable, irreducible 3-manifold.  If $X$ has a covering of dimension 2 by open subsets which are homotopically trivial in $X$, then $X$ is simply connected.
\end{corol}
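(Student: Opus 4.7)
By Proposition~\ref{prop:nerf} there exist a connected $2$-dimensional CW complex $K$ and a continuous map $f\colon X\to K$ inducing an isomorphism on $\pi_1$. Set $\pi=\pi_1(X)$ and suppose for contradiction that $\pi\neq 1$. Since $X$ is orientable and irreducible, the sphere theorem yields $\pi_2(X)=0$. If $\pi$ is infinite, then the universal cover $\widetilde X$ is a non-compact, simply connected $3$-manifold with $\pi_2(\widetilde X)=0$; iterated Hurewicz and Whitehead then show that $\widetilde X$ is contractible, so $X$ is a three-dimensional $K(\pi,1)$. As a group with torsion has infinite cohomological dimension, this forces $\pi$ to be torsion-free. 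Thus it remains to rule out $\pi$ torsion-free infinite and $\pi$ finite nontrivial separately.

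\textbf{Case 1: $\pi$ torsion-free (so $X=K(\pi,1)$).} I would build $g\colon K\to X$ with $g_*=f_*^{-1}$ on $\pi_1$ cell by cell: realise $f_*^{-1}$ on the $1$-skeleton $K^{(1)}$, then extend over each $2$-cell, which is possible because its attaching loop represents the trivial element of $\pi_1(K)$ and is therefore sent by $g$ to a null-homotopic loop in $X$. No higher cells intervene since $\dim K=2$. The composite $g\circ f\colon X\to X$ induces the identity on $\pi_1$, hence $g\circ f\simeq\mathrm{id}_X$ by the universal property of $K(\pi,1)$. Then the identity of $H_3(X;\mathbb{Z})=\mathbb{Z}$ factors as
\[
H_3(X;\mathbb{Z})\xrightarrow{f_*}H_3(K;\mathbb{Z})\xrightarrow{g_*}H_3(X;\mathbb{Z}),
\]
while $H_3(K;\mathbb{Z})=0$ since $\dim K=2$: a contradiction.

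\textbf{Case 2: $\pi$ finite nontrivial.} Pick a prime $p$ dividing $|\pi|$, a subgroup $H\leq\pi$ of order $p$, and pass to the finite covers $\widehat X\to X$ and $\widehat K\to K$ corresponding to $H$ and $f_*(H)$. Then $\widehat X$ is again a closed orientable $3$-manifold, $\widehat K$ is a connected $2$-dimensional CW complex, and the lift $\widehat f\colon\widehat X\to\widehat K$ induces an isomorphism on $\pi_1=\mathbb{Z}/p$. Replacing $X,K,f$ by these, I may assume $\pi=\mathbb{Z}/p$. Let $v\colon K\to B\mathbb{Z}/p$ be a classifying map and set $u=v\circ f$. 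Choose a non-zero class $c\in H^3(B\mathbb{Z}/p;\mathbb{F}_p)$: $c=a^3$ if $p=2$, and $c=a\cup\beta a$ if $p$ is odd, where $a\in H^1(B\mathbb{Z}/p;\mathbb{F}_p)$ is the generator and $\beta$ the Bockstein. I would check that $u^*c\neq 0$ in $H^3(X;\mathbb{F}_p)=\mathbb{F}_p$: setting $\alpha=u^*a$, the class $\alpha$ generates $H^1(X;\mathbb{F}_p)=\mathrm{Hom}(\pi,\mathbb{F}_p)$, its Bockstein $\beta\alpha$ generates $H^2(X;\mathbb{F}_p)$ by a short computation (since $\alpha$ does not lift to an integral class), and the cup product $H^1\otimes H^2\to H^3$ is non-degenerate by Poincaré duality over $\mathbb{F}_p$. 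Hence $u^*c=\alpha^3$ (or $\alpha\cup\beta\alpha$) is non-zero. On the other hand, $u^*c=f^*(v^*c)$ factors through $H^3(K;\mathbb{F}_p)=0$, which is the desired contradiction.

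\textbf{Main obstacle.} Case~1 is essentially formal once asphericity of $X$ is in hand. The delicate step is Case~2, where the cup-product computation in $H^*(X;\mathbb{F}_p)$ must produce a non-zero degree-$3$ class coming from $H^*(B\mathbb{Z}/p;\mathbb{F}_p)$; this rests on Poincaré duality with $\mathbb{F}_p$ coefficients together with a Bockstein calculation. One also has to check that the $p$-fold cover $\widehat X\to X$ preserves all the relevant hypotheses (closedness, orientability, $3$-dimensionality, and the fact that $\widehat K$ is still a $2$-dimensional CW complex with an isomorphism on $\pi_1$).
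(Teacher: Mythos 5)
Your proof is correct, and it diverges from the paper's in a substantive way in the finite fundamental group case; a comparison is worthwhile. Your Case~1 is, up to repackaging, the same as the paper's treatment of the infinite case: both show that the identity (or an isomorphism) on $H_3(X;\ZZ)\cong\ZZ$ would have to factor through $H_3(K;\ZZ)=0$. You route this through a map $K\to X$; the paper routes it through an auxiliary $K(\pi_1 X,1)$ space $Z$, using $X\to K\to Z$. The genuine difference is Case~2. The paper stays integral and works directly with $X$: it uses the universal cover to show that the Hurewicz map $\pi_3(X)\to H_3(X)\cong\ZZ$ is multiplication by $d=|\pi_1 X|$, then builds a $K(\pi_1 X,1)$ space $Z$ from $X$ by attaching a $4$-cell along a generator of $\pi_3$ and higher cells, so that $H_3(Z)\cong\ZZ/d\ZZ$ and the map $H_3(X)\to H_3(Z)$ is the nontrivial quotient map, which again must factor through $H_3(K)=0$. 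You instead pass to a $\ZZ/p$-subcover, switch to $\mathbb{F}_p$-cohomology, and use the Bockstein together with Poincar\'e duality over $\mathbb{F}_p$ to show that the class $a\cup\beta a$ (or $a^3$ for $p=2$) pulls back to a nonzero class in $H^3(\widehat X;\mathbb{F}_p)$, again contradicting the factorization through $H^3(\widehat K;\mathbb{F}_p)=0$. Both arguments are valid; the paper's is a bit shorter and avoids passage to covers by computing $H_3$ of a concretely built $K(\pi,1)$, while yours replaces that cell-attaching construction with a standard cup-product/Bockstein computation in the cohomology of a $\ZZ/p$-manifold, closer in spirit to the classical nonexistence proofs for $\ZZ/p$ acting freely on low-dimensional spheres.
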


\begin{proof} By Proposition~\ref{prop:nerf}, 
let $f\!: X \to K$ be a continuous map from $X$ to a connected $2$-dimensional complex $K$, such that the 
induced homomorphism $f_{\star}\!: \pi_1(X) \to \pi_1(K)$ is an isomorphism. Let $Z$ be a $K(\pi_{1}(X),1)$ space.  Let $\phi\! : X \to Z$ be a map from $X$ 
to $Z$ realizing the identity homomorphism on $\pi_{1}(X)$ and let $\psi\!: K \to Z$  
be the map from $K$ to $Z$ realizing the isomorphism $f_{\star}^{-1}: \!
\pi_1(K) \to \pi_{1}(X)$.  Then $\phi$ is homotopic to  $\psi\circ f$ and  the induced homomorphism 
$\phi_{*}\!: H_3(X;\ZZ) \to H_3(Z;\ZZ)$ factors through $\psi_{*}\!: H_3(K;\ZZ) \to H_3(Z;\ZZ)$. 
Since $H_3(K;\ZZ) = \{0\}$, the homomorphism  
$\phi_{*}$ must be trivial. 
$$
\begin{array}{rcl}
 X & \!\!\! \overset \phi \longrightarrow & \!\!\! Z \\
 f\downarrow & \nearrow _\psi  & \\
K & & 
\end{array}
$$

If $\pi_1(X)$ is infinite, then $X$ is aspherical and $\phi_{*}$ is an isomorphism. 
Therefore $\pi_1(X)$ is finite.

If $\pi_1(X)$ is finite of order $d >1$, then  let $\widetilde X$ be the universal covering of $X$. 
The covering map $p: \widetilde X \to X$ induces an isomorphism between the homotopy groups 
$\pi_k(\widetilde X)$  and 
$\pi_k(X)$ for $k\geq 2$. Since $\pi_2(X) =\{0\}$, $\pi_2(\widetilde X) =\{0\}$, and by the Hurewicz theorem, the canonical homomorphism 
$\pi_3(\widetilde X) \to H_3(\widetilde X;\ZZ) = \ZZ$  is an isomorphism. It follows that the canonical map $\pi_3(X)= \ZZ \to H_3(X;\ZZ) = \ZZ$ 
is the multiplication by the degree $d>1$ of the covering $p: \widetilde X \to X$. 
It is well known that one can construct a $K(\pi_{1}(X),1)$ space $Z$ 
by adding a $4$-cell to kill the generator of $\pi_3(X) =\ZZ$, and adding further 
cells of dimension $\geq 5$ to kill the higher homotopy groups.
Then the inclusion $\phi: X \to  Z$ induces the identity on $\pi_1(X)$ and a surjection  $\phi_{*}: H_3(X;\ZZ) = \ZZ \to H_3(Z;\ZZ)= \ZZ/d\ZZ$. Therefore $X$ must be simply connected.
\end{proof}

In the proof of Proposition~\ref{prop:notrivialpi1} we
argue by contradiction using Corollary \ref{cor:simplyconnected} 
and the fact that $\pi_1(M)$ is not trivial. 

With the notation of Proposition~\ref{local structure},
we may assume that for arbitrarily large $D$ there exists $n\geq n_0(D)$ such that the
image of $\pi_1(B(x,\nu(x)))\to\pi_1(M_n)$ is  trivial for all $x\in {M_n}\setminus \textrm{int}(W_n)$ as well as for each component of
 $W_n$.

Then for all $x\in {M_n}\setminus \textrm{int}(W_n)$ we set:
$$\tr(x)=\sup \left\{ r\, \left\vert\begin{array}{l}
        \pi_1(B(x,r))\to\pi_1(M_n)\textrm{ is trivial and} \\
	B(x,r) \textrm{ is contained in } B(x',r') \textrm{ with }\\
	\textrm{ curvature }  \geq -\frac1{(r')^2}\textrm{ and } 
	\frac{\vol(B(x',r'))}{(r')^3}\leq 1/D
	 \end{array}
\right.
\right\}
$$

By hypothesis, we have $\tr(x)\geq \nu(x)$. The proof of 
Proposition~\ref{prop:notrivialpi1} follows by contradiction 
with the following assertion.

\begin{aff}\label{aff:covering}
There exists a covering of  $M_n$ by open sets $U_1,\ldots,$ $ U_p$  such that:
 \begin{itemize}
\item Each $U_i$ is either contained in some $B(x_i, \tr(x_i))$ or in a subset that
deformation retracts to a component of $\mathrm{int}(W_n)$.
In particular, $U_i$ is homotopically trivial in $M$.
\item The dimension of this covering is at most $2$.
\end{itemize}
\end{aff}

Since  $M$ is irreducible and non-simply connected, this contradicts Corollary \ref{cor:simplyconnected}.

To prove Assertion~\ref{aff:covering}, we define
$$
r(x)=\min \{\frac1{11}\tr(x),1\}.
$$

\begin{lem}\label{lem:cov:deux} Let $x,y\in { M_n}\setminus \operatorname{int}(W_n)$. 
If $B(x,{r(x)})\cap B(y,{r(y)})\neq\emptyset$, then
\begin{itemize}
\item[(a)]  $3/4\leq{r(x)}/{r(y)}\leq 4/3$;
\item[(b)]  $B(x,r(x))\subset B(y,4r(y))$.
\end{itemize}
\end{lem}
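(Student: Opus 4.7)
The plan is to first extract a Lipschitz-type estimate for the function $\tr$: namely
\[
\tr(x) \;\geq\; \tr(y) - d(x,y)
\]
whenever the right-hand side is nonnegative. This follows by directly unwinding the supremum. Fix any $s < \tr(y)$. By definition there is a ball $B(x', r')$ with sectional curvature $\geq -1/(r')^2$ and normalised volume $\vol(B(x',r'))/(r')^3 \leq 1/D$ such that $B(y, s) \subset B(x', r')$, and $\pi_1(B(y,s)) \to \pi_1(M_n)$ is trivial. The sub-ball $B(x, s - d(x,y))$ sits inside $B(y, s) \subset B(x', r')$, so it inherits the same controlling witness $(x', r')$, and the inclusion $B(x, s - d(x,y)) \hookrightarrow B(y,s)$ forces its image in $\pi_1(M_n)$ to be trivial as well. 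Hence $\tr(x) \geq s - d(x,y)$, and letting $s \nearrow \tr(y)$ yields the claim.

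For part (a), by the symmetric role of $x$ and $y$ it suffices to show $r(x) \geq \tfrac{3}{4} r(y)$. If $r(x) = 1$ then $r(y) \leq 1 = r(x)$ and we are done. Otherwise $r(x) = \tr(x)/11$, and in both branches of the $\min$-definition of $r(y)$ one has the uniform inequality $\tr(y) \geq 11\, r(y)$. The hypothesis $B(x, r(x)) \cap B(y, r(y)) \neq \emptyset$ gives $d(x,y) < r(x) + r(y)$; combining with the Lipschitz estimate,
\[
11\, r(x) \;=\; \tr(x) \;\geq\; \tr(y) - d(x,y) \;\geq\; 11\, r(y) - r(x) - r(y),
\]
so $12\, r(x) \geq 10\, r(y)$, whence $r(x) \geq \tfrac{5}{6} r(y) \geq \tfrac{3}{4} r(y)$. (In fact this gives the sharper ratio $5/6 \leq r(x)/r(y) \leq 6/5$.)

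Part (b) is then immediate from (a) and the triangle inequality: for any $z \in B(x, r(x))$,
\[
d(z,y) \;\leq\; d(z,x) + d(x,y) \;<\; 2\, r(x) + r(y) \;\leq\; \tfrac{8}{3}\, r(y) + r(y) \;=\; \tfrac{11}{3}\, r(y) \;<\; 4\, r(y).
\]

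There is no real obstacle; the whole lemma is a bookkeeping consequence of the definitions of $\tr$ and $r$. The one subtle point is to handle the two branches of the $\min$ in the definition of $r$ in a uniform way, and this is cleanly absorbed into the observation $\tr(\cdot) \geq 11\, r(\cdot)$. The only substantive ingredient is the near $1$-Lipschitz behaviour of $\tr$, and that is itself a direct unpacking of the supremum in its definition.
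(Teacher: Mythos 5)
Your proof is correct and follows essentially the same route as the paper's: the near $1$-Lipschitz property of $\tr$ obtained from the triangle inequality, combined with $\tr(\cdot)\geq 11\,r(\cdot)$ and $d(x,y)<r(x)+r(y)$, then (b) from $2r(x)+r(y)<4r(y)$. The only differences are cosmetic (you reduce (a) to a one-sided inequality by symmetry and obtain the slightly sharper ratio $5/6$, where the paper assumes $r(x)\leq r(y)$ and gets $9/11$), and you spell out the justification of the Lipschitz estimate that the paper leaves implicit.
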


\begin{proof}
We may assume that $r(x)\leq r(y)$ and that $r(x)=\frac1{11}\tr(x)<1$.
From the triangle inequality, we get:
\[ \tr(x) \geq \tr(y)-r(x)-r(y) ,\]
hence
\[ 11\, r(x)=\tr(x)\geq 11r(y)-r(x)-r(y) \geq  9r(y).\]
Consequently, we have $1\geq {r(x)}/{r(y)}\geq 9/11\geq 3/4$, which shows (a).

Now (b) follows because $2\, r(x)+r(y)<4r(y)$.
\end{proof}

If $n$ is sufficiently large, we can choose points
$x_1,\ldots,x_q\in\partial W_n$ in such a way that
a tubular neighbourhood of each component of the
boundary of $W_n$ contains precisely one of the $x_j$'s and that
the balls $B(x_j,1)$ are disjoint, have volume
$\leq\frac1D$ and  sectional curvature close to
$-1$. Furthermore, we may  assume that
$B(x_j,1)$ is included in a submanifold $W'_n$ which contains
$W_n$ and can be retracted by deformation onto it. 
In particular $W'_n$ and $B(x_j,1)$ are homotopically trivial, since we have assumed that $W_n$ is. This implies that $\tr(x_j)$ is close to $1$.

 Moreover, 
for $n$ large enough, we may assume that
 $B(x_j, \frac23  r(x_j))$ contains an almost horospherical torus corresponding to a boundary component of $W_n$.
We can even  arrange for both components of
$B(x_j,  r(x_j))\setminus B(x_j, \frac 23r(x_j))$
to also contain a parallel almost horospherical torus, 
which allows to retract $W_n$ on the complement of  $B(x_j, \frac 23r(x_j))$.

We complete $x_1, x_2\ldots,x_q$ to a sequence $x_1,x_2,\ldots$ in ${ M_n}\setminus \mathrm{int} (W_n)$ such that the balls $ B(x_1,{\frac14 {r(x_1)}})$,
$B(x_2,{\frac14 {r(x_2)}}),\ldots$ are pairwise disjoint.

Such a sequence is necessarily finite, since ${M_n}\setminus \text{int}(W_n)$ is compact, and 
Lemma~\ref{lem:cov:deux} implies a positive local lower bound for the function $x\mapsto r(x)$. 
Let us choose a maximal finite sequence
 $x_1,\ldots,x_p$ with this property.

\begin{lem}\label{lem:seq}
The balls \(B(x_1,\frac23 r(x_1)),\ldots, B(x_p,\frac23 r(x_p))\)
cover  \linebreak ${M_n}\setminus \operatorname{int}(W_n)$.
\end{lem}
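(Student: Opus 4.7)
The plan is to argue by contradiction, exploiting the maximality of the chosen sequence $x_1,\dots,x_p$. Specifically, suppose there were some $y\in M_n\setminus\operatorname{int}(W_n)$ that is not contained in any of the balls $B(x_i,\tfrac{2}{3}r(x_i))$. I will show that then $B(y,\tfrac14 r(y))$ is disjoint from every $B(x_i,\tfrac14 r(x_i))$, so that appending $y$ to the sequence would produce a longer sequence with the same disjointness property, contradicting maximality.

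The key step is the following packing estimate. Fix $i\in\{1,\dots,p\}$ and suppose for contradiction that $B(y,\tfrac14 r(y))\cap B(x_i,\tfrac14 r(x_i))\neq\emptyset$. Then in particular $B(y,r(y))\cap B(x_i,r(x_i))\neq\emptyset$, so Lemma~\ref{lem:cov:deux}(a) applies and gives $r(y)\le \tfrac{4}{3}r(x_i)$. Using the triangle inequality together with this estimate,
\[
d(y,x_i)\le \tfrac14 r(y)+\tfrac14 r(x_i)\le \tfrac14\cdot\tfrac43 r(x_i)+\tfrac14 r(x_i)=\tfrac{7}{12}r(x_i)<\tfrac{2}{3}r(x_i),
\]
so that $y\in B(x_i,\tfrac23 r(x_i))$, contradicting the hypothesis on $y$. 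Hence $B(y,\tfrac14 r(y))$ must be disjoint from each $B(x_i,\tfrac14 r(x_i))$.

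Since $y\in M_n\setminus\operatorname{int}(W_n)$, the radius $r(y)$ is defined and positive, so appending $y$ to the list would give a strictly longer sequence in $M_n\setminus\operatorname{int}(W_n)$ whose quarter-radius balls remain pairwise disjoint. This contradicts the maximality of $x_1,\dots,x_p$, completing the proof. The only nontrivial point in the argument is the numerical verification that $\tfrac14 r(y)+\tfrac14 r(x_i)<\tfrac23 r(x_i)$, which is exactly where the ratio $\tfrac{1}{11}$ built into the definition of $r$ via Lemma~\ref{lem:cov:deux}(a) is used; no other obstacle is anticipated.
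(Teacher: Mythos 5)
Your proof is correct and is essentially the paper's argument, merely cast as a double contradiction rather than directly: both rely on maximality of the sequence to produce some $x_j$ with $B(x,\tfrac14 r(x))\cap B(x_j,\tfrac14 r(x_j))\neq\emptyset$, invoke Lemma~\ref{lem:cov:deux}(a) for $r(x)\le\tfrac43 r(x_j)$, and conclude $d(x,x_j)\le\tfrac{7}{12}r(x_j)<\tfrac23 r(x_j)$ by the triangle inequality. The outer contradiction layer in your write-up is unnecessary overhead, but the substance is identical.
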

\begin{proof}
Let $x\in {M_n}\setminus \text{int}(W_n)$ be an arbitrary point. By maximality, there exists a
point $x_j$ such that $B(x,\frac14 {r(x)})\cap B(x_j,{\frac14
{r(x_j)}})\neq\emptyset$. From Lemma~\ref{lem:cov:deux}, we have
$r(x)\leq\frac43r(x_j)$ and $d(x,x_j)\leq\frac14(r(x)+r(x_j))\leq
\frac7{12}r(x_j)$, hence $x\in B(x_j,{\frac23 r(x_j)})$.
\end{proof}

Let us define $r_i\! :=r(x_i)$. If $W_{n,1},\ldots, W_{n,q}$
are the components of $W_n$, so that the almost horospherical torus
$\partial W_{n,i}\subset B(x_i,\frac 23 r_i)$, we set:
\begin{itemize}
 \item $ V_i:=B(x_i,r_i)\cup  W_{n,i}$, for $i=1,\ldots,q$. 
 \item $ V_i:=B(x_i,r_i) $, for $i > q$. 
\end{itemize}

 Furthermore,
each component of  $W_{n,i}$ can be retracted in order not to intersect $V_j$ when $j\neq i$.

The construction of the open sets $V_i$ and Lemma~\ref{lem:seq} 
imply the following:

\begin{lem} 
The open sets $V_1,\ldots,V_p$ 
cover $ M_n$.
\end{lem}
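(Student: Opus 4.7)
The plan is short because the statement is essentially a direct bookkeeping exercise combining Lemma~\ref{lem:seq} with the very definition of the $V_i$. I would argue by splitting $M_n$ into the two regions $M_n\setminus\operatorname{int}(W_n)$ and $\operatorname{int}(W_n)$, and cover each separately.

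First, take any $x\in M_n\setminus\operatorname{int}(W_n)$. Lemma~\ref{lem:seq} furnishes an index $i\in\{1,\ldots,p\}$ such that $x\in B(x_i,\tfrac{2}{3}r_i)\subset B(x_i,r_i)$. Since $V_i\supset B(x_i,r_i)$ in both cases $i\le q$ and $i>q$, we get $x\in V_i$.

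Next, take any $x\in\operatorname{int}(W_n)$. Then $x$ belongs to exactly one component $W_{n,i}$. Recall from the construction that the points $x_1,\ldots,x_q$ are placed in bijective correspondence with the components of $\partial W_n$; since each component $W_{n,i}$ of $W_n$ is a solid torus, it has a single boundary torus and thus a unique associated index $i\in\{1,\ldots,q\}$. For this $i$, the definition $V_i=B(x_i,r_i)\cup W_{n,i}$ yields $x\in V_i$.

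Combining these two cases exhausts $M_n$, so $V_1,\ldots,V_p$ cover $M_n$. No real obstacle arises here; the only point requiring care is the matching of components of $W_n$ with the first $q$ basepoints, which has already been arranged in the paragraph preceding the lemma.
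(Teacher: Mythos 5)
Your proof is correct and follows exactly the route the paper has in mind: the paper merely asserts that the lemma "follows from the construction of the open sets $V_i$ and Lemma~\ref{lem:seq}" without writing out the two-case split, but your decomposition into $M_n\setminus\operatorname{int}(W_n)$ (covered via Lemma~\ref{lem:seq} and the inclusion $B(x_i,\tfrac23 r_i)\subset B(x_i,r_i)\subset V_i$) and $\operatorname{int}(W_n)$ (covered via $W_{n,i}\subset V_i$ for $i\le q$) is precisely the intended argument.
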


Let $K$ be the nerve of the covering $\{V_i\}$.
We will use this covering and the complex $K$ to build the required map from $M$ to a $2$-dimensional complex. 
The idea is first to map
$M$ to  $K$  and then to improve this 
mapping by pushing it into the $2$-skeleton $K^{(2)}$ of $K$. 

The following Lemma shows that the dimension of $K$ is bounded above by a uniform constant.

\begin{lem}
There exists a universal upper bound $N$ on the number of open sets $V_i$ which intersect a given $V_k$.
\end{lem}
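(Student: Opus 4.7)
The approach is a standard Bishop-Gromov volume packing argument applied to the pairwise disjoint balls $B(x_i, \tfrac14 r_i)$ built into the sequence $\{x_i\}$.

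First I would reduce to counting intersections of the balls $B(x_i,r_i)$. Because the retractions arrange each component of $W_{n,i}\subset V_i$ to avoid $V_j$ for $j\neq i$, we have $V_i\cap V_k = B(x_i,r_i)\cap B(x_k,r_k)$ whenever $i\neq k$. It therefore suffices to bound the number of indices $i\neq k$ with $B(x_i,r_i)\cap B(x_k,r_k)\neq\emptyset$. For any such $i$, Lemma~\ref{lem:cov:deux} gives $\tfrac34 r_k\le r_i\le \tfrac43 r_k$ and $d(x_i,x_k)\le r_i+r_k$, and a direct application of the triangle inequality then yields the nested inclusions
\[
B(x_i,\tfrac14 r_i)\;\subset\; B(x_k, 5 r_k)\;\subset\; B(x_i, 11 r_i).
\]

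Next I invoke Bishop-Gromov on $B(x_i, 11 r_i)$. Since $11 r_i\le \tr(x_i)$, the definition of $\tr$ produces a containing ball $B(x_i',r_i')\supset B(x_i, 11 r_i)$ on which the sectional curvature is bounded below by $-1/(r_i')^2$, and a careful choice makes $r_i'$ comparable to $r_i$. Applying Bishop-Gromov to the monotone quotient $s\mapsto \vol(B(x_i,s))/v_{-1/(r_i')^2}(s)$ at the two scales $s=\tfrac14 r_i$ and $s=11 r_i$, and using the inclusions above, I get
\[
\vol(B(x_i,\tfrac14 r_i))\;\ge\; c\,\vol(B(x_k, 5 r_k))
\]
for a universal constant $c>0$ arising from the hyperbolic volume ratio $v_{-1}(\alpha/44)/v_{-1}(\alpha)$ with $\alpha = 11 r_i/r_i'$. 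Since the balls $B(x_i,\tfrac14 r_i)$ are pairwise disjoint and all contained in $B(x_k, 5 r_k)$, summing the above inequality bounds their number above by $\lceil 1/c\rceil$, which I take as the desired universal constant $N$.

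The main technical point, and the step I expect to require the most care, is verifying that the containing ball produced by $\tr$ can be taken with $r_i'$ uniformly comparable to $r_i$, so that the Bishop-Gromov volume ratio is bounded below independently of $i$ and $n$. This is accomplished by making the natural choice of centering the containing ball at $x_i$ and checking that the curvature and volume constraints in the definition of $\tr$ force $r_i'\ge 11 r_i$; the remainder of the argument is routine Bishop-Gromov arithmetic.
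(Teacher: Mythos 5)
Your argument is correct and is essentially the paper's own proof: both reduce to the intersecting balls $B(x_i,r_i)$, use Lemma~\ref{lem:cov:deux} to nest everything inside $B(x_i,11r_i)\subset B(x',r')$, and run a Bishop--Gromov packing count on the pairwise disjoint balls $B(x_i,\tfrac14 r_i)$. The only point to watch is your closing remark: the definition of $\tr$ does not allow you to re-centre the containing ball at $x_i$, but you do not need to --- the containment $B(x_i,11r_i)\subset B(x',r')$ already forces $r'\gtrsim r_i$, which is all that is required to bound the comparison ratio $v_{-1}(11r_i/r')/v_{-1}(r_i/(4r'))$ by a universal constant.
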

\begin{proof}
 If $V_i\cap V_k\neq\emptyset$, then $B(x_i,r_i)\cap B(x_k,r_k)\neq\emptyset$ and
 $B(x_i,r_i)\subset B(x_k,{2 r_i+r_k})\subseteq B(x_k,{4 r_k})$.
  On the other hand, for all $i_1\neq i_2$ such that $V_{i_1}$ and $V_{i_2}$ 
intersect $V_{k}$ one has 
 $d(x_{i_1},x_{i_2})\geq\frac14(r_{i_1}+r_{i_2})\geq\frac38r_k$. The number of $V_i$ intersecting $V_k$ is thus bounded above by:
 \[
     \frac{\operatorname{vol}(B(x_k,{4
r_k}))}{\operatorname{vol}(B(x_i,{\frac3{16} r_k}))}
     \leq
     \frac{\operatorname{vol}(B(x_i,{8 r_k}))}{\operatorname{vol}(B(x_i,{\frac3{16} r_k}))}
     \leq
     \frac{\operatorname{vol}(B(x_i,{11 r_i}))}{\operatorname{vol}(B(x_i,{\frac{r_i}{8} }))}.
\]
As $B(x_i,{11 r_i})$ is included in a ball $B(x',r')$ with curvature $\geq-\frac1{(r')^2}$, by Bishop-Gromov 
inequality this is bounded above by:
\[
     \frac{v_{-\frac1{(r')^2}}(11 r_i)}{v_{-\frac1{(r')^2}}(\frac{r_i}{8})}
     =
     \frac{v_{-1}(\frac{11 r_i}{r'})}{v_{-1}(\frac{r_i}{8 r'})}
	\leq N.
 \]
\end{proof}

Let  $\Delta^{p-1}\subset\mathbf R^{p}$ denote the standard
unit  simplex of dimension $p-1$.
With a partition of unity $(\phi_i)$ adapted to the $(V_i)$
and with certain metric properties, we construct a map:
\begin{equation*}
f=\frac1{\sum_i\phi_i}(\phi_1,\ldots,\phi_p)\! :M_n \to
\Delta^{p-1}\subset \mathbf R^{p},
\end{equation*}

We view $K$ as a subcomplex of $\Delta^{p-1}$, so that
the range of  $f$ is contained in $K$, whose dimension is at most $N$. Moreover, $f$ maps the components of $W_n$ onto distinct vertices of the  $0$-skeleton  of $K$. We first
estimate the Lipschitz constant of the map $f\!:M_n\to K$, by choosing the $\phi_i$'s.

\begin{lem}\label{lem:boundlip}
There exists $L_N>0$ such that the partition of unity can be chosen so that the restriction $f\vert_{V_k}$ is
$\frac{L_N}{r_k}$-Lipschitz.
\end{lem}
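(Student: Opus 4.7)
The plan is to realize each $\phi_i$ as a $1$-Lipschitz cutoff supported in $V_i$ that attains a definite multiple of $r_i$ on a ``core'' $A_i\subset V_i$, then bound $|df|$ by the standard quotient computation applied to $f=(\phi_1,\ldots,\phi_p)/\Phi$ with $\Phi:=\sum_i\phi_i$. The two ingredients will be a uniform lower bound $\Phi\ge c\,r_k$ on $V_k$ and the bound (from the preceding lemma) that at most $N$ of the $\phi_i$'s are nonzero at any point.

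For $i>q$ I would set
$$\phi_i(x):=\max\bigl(0,\,\tfrac{1}{3}r_i-d(x,\overline{B(x_i,\tfrac{2}{3}r_i)})\bigr),$$
where $d$ denotes distance in $(M_n,g_n)$; this is $1$-Lipschitz, equals $\tfrac{1}{3}r_i$ on the core $A_i:=\overline{B(x_i,\tfrac{2}{3}r_i)}$, is bounded by $\tfrac{1}{3}r_i$, and is supported in $\overline{B(x_i,r_i)}\subset V_i$. For $i\le q$ the cover piece $V_i=B(x_i,r_i)\cup W_{n,i}$ also contains the solid torus $W_{n,i}$; I would extend the above so that $\phi_i$ remains $1$-Lipschitz and equals $\tfrac{1}{3}r_i$ on most of $W_{n,i}$, tapering off only in a thin strip near the components of $\partial V_i$ lying in $\overline{W_{n,i}}$. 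This extension is possible because reduction~(iv) guarantees a long collar of $\partial W_n$ in $W_n$ and $B(x_i,\tfrac{2}{3}r_i)$ contains a parallel almost horospherical torus cutting $W_{n,i}$ into a thin outer annular region and a large interior region; this is the main technical point I expect to handle carefully.

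Given such cutoffs, the cores $A_i$ cover $V_k$: Lemma~\ref{lem:seq} handles $V_k\setminus\operatorname{int} W_n$, while the extension to $W_{n,i}$ handles points inside $W_n$. So for any $x\in V_k$ there is an index $j$ with $x\in A_j$, whence $\phi_j(x)=\tfrac{1}{3}r_j$; since $V_j\cap V_k\ne\emptyset$, Lemma~\ref{lem:cov:deux}(a) gives $r_j\ge\tfrac{3}{4}r_k$, so
$\Phi(x)\ge\tfrac{1}{3}r_j\ge\tfrac{1}{4}r_k$. Moreover, by the preceding lemma the set $I(x):=\{i:\phi_i(x)\ne 0\}$ has cardinality at most $N$, so $|\nabla\Phi|\le N$.

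Finally the quotient computation: for $i\in I(x)$, using $\phi_i\le\tfrac{1}{3}r_i\le\tfrac{4}{9}r_k$ (Lemma~\ref{lem:cov:deux}(a)),
$$\bigl|\nabla(\phi_i/\Phi)\bigr|\;\le\;\frac{|\nabla\phi_i|}{\Phi}+\frac{\phi_i\,|\nabla\Phi|}{\Phi^{2}}\;\le\;\frac{4}{r_k}+\frac{(4r_k/9)\,N}{(r_k/4)^{2}}\;\le\;\frac{C\,N}{r_k}$$
for a universal $C$. Since at most $N$ components of $f$ have nonzero differential at $x$, $|df(v)|^{2}\le N\,\max_i|\nabla(\phi_i/\Phi)|^{2}\,|v|^{2}$, giving $|df|\le\sqrt{N}\cdot CN/r_k=L_N/r_k$ with $L_N:=CN^{3/2}$, as required.
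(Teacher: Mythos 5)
Your proof is correct and follows essentially the same strategy as the paper: construct $1$-Lipschitz bump functions supported in the $V_i$ with a definite plateau, use Lemma~\ref{lem:seq} together with Lemma~\ref{lem:cov:deux}(a) to get a uniform lower bound on $\Phi$ over $V_k$, bound the number of nonzero terms by the preceding lemma, and estimate the quotient. The only cosmetic difference is that the paper scales its cutoffs $\phi_i=\tau\bigl(\tfrac1{r_i}d(\partial V_i,\cdot)\bigr)$ to take values in $[0,1]$ (with Lipschitz constant $\sim 1/r_i$) and then invokes the Lipschitz property of the normalisation map on $\{\sum y_i\ge 1\}$, whereas you keep $\phi_i$ valued in $[0,r_i/3]$ with Lipschitz constant $1$ and carry out the quotient-rule computation explicitly; the bookkeeping comes out the same.
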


\begin{proof}
 Let $\tau:[0,1]\to[0,1]$ be an auxiliary function with Lipschitz constant bounded by $4$, which vanishes in a neighbourhood of  $0$ and verifies
$\tau|_{[\frac13,1]}\equiv1$. Let us define
$\phi_k:=\tau(\frac1{r_k}d(\partial V_k,\cdot))$ on $V_k$ and let us extend it trivially on $M_n$.  Then $\phi_k$ is
$\frac4{r_k}$-Lipschitz.

Let $x\in V_k$. The functions $\phi_i$ have Lipschitz constant  $\leq \frac43\cdot\frac4{r_k}$, and  all $\phi_i$
vanish at  $x$ except at most $N+1$ of them. Since the functions 
\[ (y_0,\dots,y_N)\mapsto \frac{y_k}{\sum_{i=0}^N y_i} \]
are Lipschitz on  $\{y\in {\mathbf R}^{N+1} \mid  y_0\geq0, \ldots , y_N\quad
\text{and}\quad \sum_{i=0}^N y_i\geq1\}$, and each  $x\in M_n$
belongs to some $V_k$ with $d(x,\partial V_k)\geq \frac{r_k}3$,
the conclusion follows.
\end{proof}

We shall now inductively  deform $f$ by homotopy into the $3$-skeleton $K^{(3)}$, while keeping the local Lipschitz constant under control.

\begin{lem}\label{lem:homotopelowerskel}
For all $d\geq4$ and $L>0$ there exists $L'=L'(d,L)>0$ such that the following assertion holds true:

Let $g\! : \! M_n \to K^{(d)}$ be a
$\frac{L}{r_k}$-Lipschitz map defined on  $V_k$ and such that the pull-back of the open star of the vertex $v_{V_k}\in K^{(0)}$ is
contained in  $V_k$.  Then $g$ is homotopic rel  $K^{(d-1)}$ to a map $\tilde
g:M_n \to K^{(d-1)}$ with the same properties as $g$, $L$ being replaced by $L'$.
\end{lem}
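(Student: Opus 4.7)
The plan is to execute the classical simplicial push-away-from-an-interior-point move one $d$-simplex at a time, the new ingredient being a uniform Lipschitz estimate based on the dimension gap $d\geq 4>3=\dim M_n$. For each $d$-simplex $\sigma$ of $K$, with vertices $v_{V_{k_0}},\ldots,v_{V_{k_d}}$, the star hypothesis on $g$ forces $g^{-1}(\mathrm{int}(\sigma))\subset \bigcap_{j=0}^d V_{k_j}\subset V_{k_0}$. Combining Bishop-Gromov ($\vol(V_{k_0})\leq C\,r_{k_0}^3$, since $V_{k_0}$ sits inside a ball with curvature $\geq -1/(r')^2$) with the $\tfrac{L}{r_{k_0}}$-Lipschitz hypothesis, the $3$-dimensional Hausdorff measure of $g(M_n)\cap \mathrm{int}(\sigma)$ is at most $CL^3$, uniformly in $\sigma$, $g$, and $n$.

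Because $d\geq 4$, this $\mathcal{H}^3$-bound forces a point $p_\sigma\in\mathrm{int}(\sigma)$ at distance at least $\delta=\delta(L,d)>0$ from $g(M_n)\cap\sigma$: the $\delta$-neighborhood of a $3$-rectifiable subset of $\sigma$ of $\mathcal{H}^3$-measure $\leq CL^3$ has $d$-volume bounded by $C'L^3\delta^{d-3}$, which is strictly less than $\vol(\sigma)$ once $\delta$ is small enough depending only on $L$ and $d$. I would then define $\tilde g(x)$, for $g(x)\in\mathrm{int}(\sigma)$, to be the intersection of $\partial\sigma$ with the ray from $p_\sigma$ through $g(x)$, and set $\tilde g=g$ on $g^{-1}(K^{(d-1)})$. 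Doing this simultaneously on each $d$-simplex produces a continuous map $\tilde g: M_n\to K^{(d-1)}$, and straight-line interpolation inside each $\sigma$ supplies a homotopy $g\simeq \tilde g$ rel $K^{(d-1)}$.

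The radial projection from $p_\sigma$ is Lipschitz with constant at most $\diam(\sigma)/\delta\leq C_d/\delta$ outside $B(p_\sigma,\delta)$, so $\tilde g|_{V_k}$ is $L'/r_k$-Lipschitz with $L'=C_d L/\delta(L,d)$. The star condition is preserved: if $\tilde g(x)\in\mathrm{star}(v_{V_k})$ and $\tilde g(x)$ came from the projection, then $\tilde g(x)\in\partial\sigma$, which forces $v_{V_k}$ to be a vertex of $\sigma$; then $g(x)\in\mathrm{int}(\sigma)\subset\mathrm{star}(v_{V_k})$, and the hypothesis on $g$ yields $x\in V_k$. On the set where $\tilde g=g$ the property is inherited directly from $g$.

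The main obstacle is producing the uniform lower bound $\delta(L,d)>0$: this is precisely where the dimension gap $d-3\geq 1$ is used, and it crucially relies on the star hypothesis through the inclusion $g^{-1}(\mathrm{int}(\sigma))\subset V_{k_0}$, without which the $\mathcal{H}^3$-measure of the image in $\sigma$ would a priori grow with the number of $V_k$'s meeting $\sigma$ and the scales $r_k$. Once $\delta(L,d)$ is in hand, the Lipschitz and star verifications are routine.
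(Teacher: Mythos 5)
Your overall strategy is the same as the paper's: find a point $p_\sigma$ in each $d$-simplex $\sigma$ that is uniformly far from the image of $g$, using the dimension gap $d\geq 4>3$ and a volume-counting argument, and then radially project. However, the quantitative step that produces the uniform $\delta(L,d)>0$ is not correct as you state it. You claim that ``the $\delta$-neighborhood of a $3$-rectifiable subset of $\sigma$ of $\mathcal{H}^3$-measure $\leq CL^3$ has $d$-volume bounded by $C'L^3\delta^{d-3}$.'' This is false as a general fact about $3$-rectifiable sets: a finite $\delta/2$-net of $\sigma$ has $\mathcal{H}^3$-measure zero, yet its $\delta$-neighborhood is all of $\sigma$. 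Small Hausdorff measure does not control $\delta$-neighborhood volume; what does control it is a bound on the $\delta$-covering (equivalently, $\delta$-packing) number of the set.

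The repair — and this is exactly what the paper's proof does — is to get the packing bound from the domain, not from the Hausdorff measure of the image. If $p_\sigma$ does not exist, then $g^{-1}(\mathrm{int}(\sigma))\subset V_{k}\subset B(x_k,r_k)$ must hit, for some $\theta>0$, a $\theta$-separated subset of $\sigma$ of cardinality $\geq C(d)\theta^{-d}$; its preimage in $B(x_k,r_k)$ is then $(r_k\theta/L)$-separated (since $g$ is $(L/r_k)$-Lipschitz on $V_k$), and Bishop–Gromov (using that $B(x_k,11r_k)$ sits inside a ball $B(x',r')$ with curvature $\geq -1/(r')^2$) caps the cardinality of such a set by $C(L/\theta)^3$. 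Comparing $\theta^{-d}\lesssim (L/\theta)^3$ with $d\geq 4$ forces $\theta\geq \theta_0(d,L)>0$. So your conclusion and the rest of your argument (radial projection, Lipschitz bound $L'=C_d L/\theta_0$, preservation of the star condition) are fine; only the intermediate ``Hausdorff measure $\Rightarrow$ small neighborhood'' inference needs to be replaced by this discrete packing count in the domain.
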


\begin{proof}
It suffices to find a constant  $\theta=\theta(d,L) >0$
such that each $d$-simplex $\sigma\subset K$ contains a point
$z$ whose distance to  $\partial\sigma$ and to the image of $g$ is $\geq\theta$.
In order to push $g$ into the  $(d-1)$-skeleton, we compose it on $\sigma$
with the radial projection from $z$. This increases the Lipschitz constant by
a  multiplicative factor bounded above by a function of $\theta(d,L)$, 
and decrease the inverse image of the open stars of the vertices.

If $\theta$ does not satisfy the required property for some $d$-simplex $\sigma$, then $image(g)\cap int(\sigma)$ contains
a set of cardinality at least $C(d)\cdot\frac1{\theta^d}$ of  points whose
pairwise distances are $\geq\theta$. Let 
$A\subset M_n$ be a set containing exactly one point of the inverse image of each of  these points. 
Since $g$ maps $W_n$ into the  $0$-skeleton, $A\subset B(x_k,r_k)$.
As $g$ is
$\frac{L}{r_k}$-Lipschitz on $V_k$, the distance between any two distinct points in $A$ is bounded
below by $\frac1{L}r_k\theta$ . Hence the cardinal of  $A$
is bounded above by
\begin{multline*}
\frac{\vol(B(x_k,r_k)}{\vol(B(y,\frac{r_k\theta}{2L}))}
\leq
\frac{\vol(B(y,2\,r_k)}{\vol(B(y,\frac{r_k\theta}{2L}))}
\leq
\frac{v_{-\frac1{(r')^2}}(2\, r_k)}{v_{-\frac1{(r')^2}}(\frac{r_k\theta}{2L})}
=
\\
\frac{v_{-1}(2\frac{r_k}{r'})}{v_{-1}(\frac{\theta}{2L}\frac{r_k}{r'})}\leq C\left(\frac{L}{\theta}\right)^3,
\end{multline*}
where $y$ is any point in $A$. In order to apply Bishop-Gromov, 
we used the fact that $B(x_k,11 r_k)$ is included in a ball of radius  $r'$ with curvature
$\geq -1/(r')^2$.
The inequality $C(d)\cdot\frac1{\theta^d}\leq C\cdot(\frac{L}{\theta})^3$
gives a positive lower bound $\theta_0(d,L)$ for $\theta$.
Consequently, any $\theta<\theta_0$ has the desired property.
\end{proof}

\begin{lem}\label{lem:volsubcubic}
There exists a constant $C$ such that if $D$ is large enough, then
\[ \vol( B(x_i,r_i) )\leq C\frac1D r_i^3 \qquad\textrm{ for all } i.\]
\end{lem}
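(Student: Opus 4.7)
The plan is to combine the pinned-down normalised volume $\vol(B(x_i,\er(x_i)))=\varepsilon_0\,\er(x_i)^3$ (Lemma~\ref{aff:er}(ii)) with Bishop--Gromov monotonicity applied on $B(x_i,r_i)$ to propagate the smallness of $\varepsilon_0$ from scale $\er(x_i)$ up to scale $r_i$. Bishop--Gromov yields an upper bound on the volume of the larger ball in terms of the smaller one, which is exactly the direction we need, provided we secure $\er(x_i)\leq r_i$ and a suitable curvature lower bound on $B(x_i,r_i)$.

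\textbf{Setup.} Two ingredients are required. (a) \emph{Curvature on $B(x_i,r_i)$ is $\geq -1/r_i^2$.} Since $r_i<\tr(x_i)$, the definition of $\tr(x_i)$ supplies $(x',r')$ with $B(x_i,r_i)\subseteq B(x',r')$, sectional curvature $\geq -1/(r')^2$ on $B(x',r')$, and $\vol(B(x',r'))/(r')^3\leq 1/D$. The inclusion forces $r'\geq r_i$, hence the curvature on $B(x_i,r_i)$ is at least $-1/r_i^2$. (b) \emph{$\er(x_i)\leq r_i$.} For $i>q$ (thin part), Proposition~\ref{local structure} provides $\nu(x_i)=L\,\er(x_i)$; under the standing hypothesis in the proof of Proposition~\ref{prop:notrivialpi1}, $\pi_1(B(x_i,\nu(x_i)))\to\pi_1(M_n)$ is trivial, so $B(x_i,\nu(x_i))$ itself serves as a witness in the definition of $\tr$, giving $\tr(x_i)\geq L\,\er(x_i)$. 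Taking $L\geq 11$ (permissible: the lower bound on $L$ in the proof of Proposition~\ref{local structure} can be freely enlarged), we obtain $r_i\geq\tr(x_i)/11\geq\er(x_i)$. For $i\leq q$ (points $x_i\in\partial W_n$), the construction of $W_n$ directly yields $\tr(x_i)\geq 1$, hence $r_i\geq 1/11$, together with $\vol(B(x_i,r_i))\leq\vol(B(x_i,1))\leq 1/D$, which already gives the desired inequality with $C=11^3$.

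\textbf{Main computation.} For $i>q$, Bishop--Gromov centred at $x_i$ with model curvature $-1/r_i^2$ applied to the radii $\er(x_i)\leq r_i$, together with the scaling $v_{-1/r_i^2}(s)=r_i^3\,v_{-1}(s/r_i)$, yields
\[
\vol(B(x_i,r_i)) \;\leq\; \vol(B(x_i,\er(x_i)))\,\frac{v_{-1/r_i^2}(r_i)}{v_{-1/r_i^2}(\er(x_i))} \;=\; \varepsilon_0\,\er(x_i)^3\,\frac{v_{-1}(1)}{v_{-1}(\er(x_i)/r_i)}.
\]
The elementary inequality $v_{-1}(t)\geq (4\pi/3)\,t^3$ (hyperbolic balls dominate Euclidean) collapses the right-hand side to $\frac{3\,v_{-1}(1)}{4\pi}\,\varepsilon_0\,r_i^3$. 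Since the proof of Proposition~\ref{local structure} takes $\varepsilon_0=1/(1000\,D_0)$ with $D_0$ matching the present $D$, we conclude $\vol(B(x_i,r_i))\leq C\,r_i^3/D$ for a universal constant $C$.

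\textbf{Main obstacle.} The only subtlety is bookkeeping of parameters: one must verify that the $\varepsilon_0$ chosen in Lemma~\ref{aff:er} may be taken proportional to $1/D$ and that, simultaneously, $L=\nu(x)/\er(x)$ may be taken at least $11$. Both are afforded by the explicit freedom in the proof of Proposition~\ref{local structure}, so no real difficulty arises; once these choices are fixed, the Bishop--Gromov estimate is immediate.
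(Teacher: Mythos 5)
Your Bishop--Gromov computation is fine, but the ingredient you feed into it is not available to you, and this is a genuine gap. You take the small-scale normalised volume from Lemma~\ref{aff:er}(ii), namely $\vol(B(x_i,\er(x_i)))=\varepsilon_0\,\er(x_i)^3$ with $\varepsilon_0\sim 1/D$, together with the identity $\nu(x_i)=L\,\er(x_i)$. But the radius $\er$, the constant $\varepsilon_0=\tfrac1{1000 D_0}$, and the relation $\nu=L\,\er$ are all \emph{internal} to the proof by contradiction of Proposition~\ref{local structure}: $D_0$ there is the hypothetical parameter for which the proposition is \emph{assumed to fail}, not a parameter you are free to identify with the $D$ of Lemma~\ref{lem:volsubcubic}. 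Moreover the stated conclusion of Proposition~\ref{local structure} never asserts $\nu=L\,\er$ (the $L$ in the proof even depends on the soul of the hypothetical limit, so it is not uniform). So the step ``$\varepsilon_0$ matches $1/D$'' and the step ``$\er(x_i)\leq r_i$'' (via $\nu=L\,\er$, $L\geq 11$) both rest on machinery you cannot invoke from outside that proof.

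The fix --- and the paper's route --- is to use the proposition's stated conclusion~(b)(3) directly: $\vol(B(x_i,\nu_{x_i}))\leq\tfrac1D\nu_{x_i}^3$. Combined with $r_i\geq\nu_{x_i}/11$ (from $\tr(x_i)\geq\nu(x_i)$ and the definition $r_i=\min\{\tr(x_i)/11,1\}$) and the curvature bound $\geq -1/r_i^2$ on $B(x_i,r_i)$, Bishop--Gromov from radius $\nu_{x_i}/11$ to radius $r_i$ gives
$$\vol(B(x_i,r_i))\ \leq\ \frac{v_{-1}(1)}{v_{-1}\bigl(\tfrac{\nu_{x_i}}{11\,r_i}\bigr)}\,\vol\bigl(B(x_i,\tfrac{\nu_{x_i}}{11})\bigr)\ \leq\ C\Bigl(\frac{r_i}{\nu_{x_i}}\Bigr)^3\cdot\frac1D\,\nu_{x_i}^3\ =\ \frac{C}{D}\,r_i^3.$$
Everything here is exactly the content of the proposition, so nothing leaks in from its proof. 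Your argument is repaired by replacing the small scale $\er(x_i)$ by $\nu_{x_i}/11$ and replacing $\varepsilon_0\,\er^3$ by $\tfrac1D\nu^3$; once that substitution is made, your computation coincides with the paper's.
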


\begin{proof}
We know that $\vol (B(x_i,\nu_{x_i }))\leq \frac 1D \nu_{x_i}^3$.
Furthermore, $r_i\geq \frac{\nu_{x_i}}{11}$ and
$B(x_i,r_i)$ is included in a ball $B(x',r')$ with curvature 
$\geq -\frac1{r'^2}$. As $r'\geq r_i$, the curvature on  $B(x_i,r_i)$
is $\geq -\frac1{r_i^2}$. The Bishop-Gromov inequality gives:
$$
\frac{\vol(B(x_i,\frac{\nu_{x_i}}{11}))}{v_{-\frac1{r_i^2}} ( \frac{\nu_{x_i}}{11}) }
\geq \frac{\vol(B(x_i,r_i))}{v_{-\frac1{r_i^2}} ( r_i )}.
$$
Equivalently,
$$
{\vol(B(x_i,\frac{\nu_{x_i}}{11}))}   
\geq \frac{\vol(B(x_i,r_i))}{v_{-1} ( 1 )}  {v_{-1 } ( \frac{\nu_{x_i}}{11\, r_i}) }
\geq
\vol( B(x_i,r_i) ) \frac1C \left(\frac{\nu_{x_i}}{r_i}\right)^3,
$$
for some uniform $C>0$.
Hence 
$$
\vol( B(x_i,r_i) )  \leq C \left(\frac{r_i}{\nu_{x_i}}\right)^3
\vol( B(x_i,\frac{\nu_{x_i}}{11}) )\leq Cr_i^3 \frac 1D.
$$
\end{proof}

Finally we push $f$ into the  $2$-skeleton.

\begin{lem}\label{lem:2skeleton}
For a suitable choice of $D>1$, there exists a map $f^{(2)}\!:  M_n \to K^{(2)}$ such that:
\begin{enumerate}
\item $f^{(2)}$ is homotopic to $f$ rel $K^{(2)}$.
\item The inverse image of the open star of each vertex $v_{V_k}\in K^{(0)}$ is contained in  $V_k$.
\end{enumerate}
\end{lem}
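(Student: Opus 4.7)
The strategy is first to push $f$ inductively from $K^{(N)}$ down to $K^{(3)}$ by iterated application of Lemma~\ref{lem:homotopelowerskel}, and then to descend from $K^{(3)}$ to $K^{(2)}$ by a direct volumetric argument based on Lemma~\ref{lem:volsubcubic}. The second step cannot be handled by Lemma~\ref{lem:homotopelowerskel}, whose statement requires $d\geq 4$.

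Starting from the map $f$ of Lemma~\ref{lem:boundlip}, I apply Lemma~\ref{lem:homotopelowerskel} successively with $d=N,N-1,\ldots,4$. After $N-3$ applications I obtain a map $f^{(3)}\!:M_n\to K^{(3)}$, homotopic to $f$ rel $K^{(3)}$, whose restriction to each $V_k$ is $\frac{L^{(3)}}{r_k}$-Lipschitz for some uniform constant $L^{(3)}=L^{(3)}(N,L_N)$, and which still satisfies the pullback condition on the open star of every vertex $v_{V_k}$.

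For the final descent, fix a $3$-simplex $\sigma$ of $K$ and let $v_{V_{k_1}},\ldots,v_{V_{k_4}}$ be its vertices. By the pullback condition $(f^{(3)})^{-1}(\mathrm{int}\,\sigma)\subset\bigcup_{j=1}^{4}V_{k_j}$, and since each component $W_{n,k_j}$ of $W_n$ is mapped into the $0$-skeleton of $K$, only the balls $B(x_{k_j},r_{k_j})\subset V_{k_j}$ contribute to $f^{(3)}(M_n)\cap\mathrm{int}\,\sigma$. Combining the Lipschitz bound with Lemma~\ref{lem:volsubcubic} gives
\[
\mathcal{H}^{3}\!\bigl(f^{(3)}(B(x_{k_j},r_{k_j}))\bigr)\leq \Bigl(\frac{L^{(3)}}{r_{k_j}}\Bigr)^{3}\vol\bigl(B(x_{k_j},r_{k_j})\bigr)\leq \frac{C(L^{(3)})^{3}}{D},
\]
so the total $3$-Hausdorff measure of $f^{(3)}(M_n)\cap\mathrm{int}\,\sigma$ is at most $4C(L^{(3)})^{3}/D$. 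Choosing $D$ large enough that this quantity is strictly smaller than the Euclidean volume of $\sigma$, the image cannot fill $\mathrm{int}\,\sigma$, so there exists a point $z_\sigma\in\mathrm{int}\,\sigma\setminus f^{(3)}(M_n)$. Radial projection $p_\sigma$ from $z_\sigma$ onto $\partial\sigma$ then defines a homotopy rel $\partial\sigma$ of the inclusion $\sigma\setminus\{z_\sigma\}\hookrightarrow\sigma$ to a map into $\partial\sigma\subset K^{(2)}$; performing this construction simultaneously for all $3$-simplices produces the desired map $f^{(2)}\!:M_n\to K^{(2)}$, homotopic to $f^{(3)}$ (and hence to $f$) rel $K^{(2)}$.

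The pullback condition survives this last step: if $p_\sigma(y)$ lies in the open star of $v_{V_k}$ in $K^{(2)}$, then the ray from $z_\sigma$ through $y$ avoids the face $F_k$ opposite to $v_{V_k}$, which forces $y\notin F_k$; hence $y$ already belongs to the open star of $v_{V_k}$ in $K^{(3)}$, so $(f^{(2)})^{-1}(\mathrm{st}(v_{V_k}))\subset(f^{(3)})^{-1}(\mathrm{st}(v_{V_k}))\subset V_k$. The main obstacle is exactly the volume estimate above: one must ensure that the $3$-Hausdorff measure of $f^{(3)}(M_n)\cap\mathrm{int}\,\sigma$ is uniformly small, independently of $n$ and of the (unbounded with $n$) number of $3$-simplices of $K$. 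This uniformity rests entirely on the combination of the uniform Lipschitz constant $L^{(3)}$, the a priori bound of four vertices per simplex, and the subcubic volume control of Lemma~\ref{lem:volsubcubic}, itself a consequence of the inclusion $B(x_i,r_i)\subset B(x',r')$ with $\vol(B(x',r'))\leq r'^{3}/D$.
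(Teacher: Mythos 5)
Your proposal is correct and follows essentially the same route as the paper: iterate Lemma~\ref{lem:homotopelowerskel} down to $K^{(3)}$ with a uniform Lipschitz constant, then use Lemma~\ref{lem:volsubcubic} together with that Lipschitz bound to show the image of $f^{(3)}$ cannot fill any $3$-simplex, and push into $K^{(2)}$ by radial projection from an omitted interior point. The only cosmetic differences are that the paper uses the (stronger) fact that the preimage of $\mathrm{int}\,\sigma$ lies in the \emph{intersection} of the relevant $V_{k_j}$'s, so a single ball suffices for the volume bound, and it defers the verification that the star-pullback condition survives the final projection to a remark after the lemma, which you carry out explicitly.
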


\begin{proof}
The inverse image by  $f$ of the open star of the vertex
$v_{V_k}\in K^{(0)}$ is contained in  $V_k$.
Using Lemma~\ref{lem:homotopelowerskel} several times, we find a map
$f^{(3)}: M_n \to K^{(3)}$ homotopic to $f$ and a universal constant $\hat L$
such that $(f^{(3)})^{-1}(star(v_{V_k}))\subset V_k$ and $f^{(3)}_{\vert V_k}$
is $\frac{\hat L}{r_k}$-Lipschitz.

It now suffices to show that no $3$-simplex $\sigma\subset K$
can lie entirely in the image of $f^{(3)}$. Indeed, once we know this, we can push $f^{(3)}$ into
the $2$-skeleton of $K$ using a central projection in each simplex, with centre in the complement of this image. Note that here no metric estimate is required in the conclusion.

Let us thus assume that there exists a $3$-simplex $\sigma$ contained in the image of $f^{(3)}$.
The inverse image of $int(\sigma)$ by $f^{(3)}$ is a subset of the
intersection of the $V_j$'s such that $v_{V_j}$ is a vertex of $\sigma$.
Let $V_k$ be one of them. 
As $\vol(f^{(3)}(V_k))\leq \vol(f^{(3)}(B(x_k,r_k)))$,
Lemma~\ref{lem:volsubcubic} yields:
\[
\vol(image(f^{(3)})\cap\sigma)\leq \vol(f^{(3)}(V_k))
 \leq ( \frac{\hat L}{r_k})^3 \vol( B(x_k,r_k) )
  \leq C \hat L^3\frac1D
\]
with uniform constants $C$ and $\hat L$. Hence, if $D$ is
sufficiently large, then one has $\vol(image(f^{(3)})\cap\sigma)<\vol(\sigma)$.
\end{proof}

The retraction  used to push $f$ into the $2$-skeleton does not involve the $0$-skeleton of $K$. 
As a consequence, the  inverse images of the open stars of the vertices $v_k$ still satisfy  
$(f^{(2)})^{-1}(star(v_{V_k}))\subset V_k$, and thus $(f^{(2)})^{-1}(star(v_{V_k})) 
$ is
homotopically trivial in $M$. This proves the assertion and ends the proof by contradiction of Proposition~\ref{prop:notrivialpi1}.
\qed

\subsection{End of the proof of Theorem \ref{TH:GRAPH}}
Proposition~\ref{prop:notrivialpi1} implies:

\begin{corol}\label{cor:nontrivial}
There exists $D_0>0$ such that if $D>D_0$ and $n\geq n_0(D)$,
 then there exists a compact submanifold ${\mathcal V}_0 \subset M_n$ with the following properties:
\begin{itemize}
\item[(i)]  ${\mathcal V}_0$ is either a connected component 
of $W_n$ or a tubular neighbourhood of the soul of the local model of some point
 $x_0\in { M_n}\setminus \operatorname{int} (W_n)$. 
\item[(ii)] ${\mathcal V}_0$ is a solid torus, a thickened torus or the twisted  $I$-bundle on the Klein bottle. 
\item[(iii)]  ${\mathcal V}_0$ is homotopically non-trivial in $M_n$.
\end{itemize}
\end{corol}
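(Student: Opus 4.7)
The plan is to derive the corollary by a case split along the two alternatives of Proposition~\ref{prop:notrivialpi1}, exploiting the standing reductions already in force at the start of Section~\ref{sec:preuve}: $M$ is not a lens space, no projective plane is embedded in $M$, all components of $W_n$ are solid tori (after the surgery replacing knot exteriors), and local models are noncompact so that Proposition~\ref{local structure} applies to every $x \in M_n \setminus \operatorname{int}(W_n)$ with the soul $S_x$ of dimension at most $2$.

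Fix $D > D_0$ and $n \geq n_0(D)$, and apply Proposition~\ref{prop:notrivialpi1}. In alternative (a), a component $\mathcal V_0$ of $W_n$ fails to be homotopically trivial in $M_n$; by our reduction, this component is a solid torus, so (i), (ii), (iii) all hold at once. In alternative (b), we have a point $x_0 \in M_n \setminus \operatorname{int}(W_n)$ such that the image of $\pi_1(B(x_0,\nu(x_0))) \to \pi_1(M_n)$ is nontrivial. Invoke Proposition~\ref{local structure}: the ball $B(x_0,\nu(x_0))$ is $\tfrac{1}{D}$-close, via an approximation $f_{x_0}$, to a metric ball in a complete noncompact nonnegatively curved $3$-manifold $X_{x_0}$ whose soul $S_{x_0}$ sits within distance $\nu(x_0)/D$ of $f_{x_0}(x_0)$ and has diameter at most $\nu(x_0)/D$. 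By the remark following Proposition~\ref{local structure}, since $M$ is not $\mathbb{RP}^3$, the soul is a point, a circle, a $2$-sphere, a $2$-torus, or a Klein bottle, and correspondingly $B(x_0,\nu(x_0))$ is homeomorphic to $B^3$, $S^1\times D^2$, $S^2\times I$, $T^2\times I$, or the twisted $I$-bundle over the Klein bottle.

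The nontriviality of $\pi_1(B(x_0,\nu(x_0))) \to \pi_1(M_n)$ forces $\pi_1(B(x_0,\nu(x_0)))$ itself to be nontrivial, immediately ruling out the $B^3$ and $S^2\times I$ cases. The three remaining cases are exactly the manifolds listed in conclusion (ii) of the corollary. For $D$ large enough, the approximation $f_{x_0}$ is a diffeomorphism onto its image, so we can pull back a small tubular neighborhood of $S_{x_0}$ in $X_{x_0}$ through $f_{x_0}$ to obtain a compact submanifold $\mathcal V_0 \subset B(x_0,\nu(x_0))$ which deformation retracts onto the preimage of the soul; $\mathcal V_0$ is homeomorphic to the ambient ball (a solid torus, a thickened torus, or the twisted $I$-bundle on the Klein bottle) and contains a loop generating the nontrivial element of the image of $\pi_1(B(x_0,\nu(x_0))) \to \pi_1(M_n)$. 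This yields (i), (ii), and (iii) in alternative (b) as well.

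The only potentially delicate point is making the tubular neighborhood of the soul an honest submanifold of $M_n$ with the expected homeomorphism type, rather than merely a subset of $B(x_0,\nu(x_0))$. This is straightforward once one observes that for $D$ sufficiently large the $\mathcal{C}^2$-approximation $f_{x_0}$ provides a diffeomorphism from a neighborhood of $S_{x_0}$ onto its image in $M_n$; the diameter bound $\operatorname{diam}(S_{x_0}) + d(f_{x_0}(x_0), S_{x_0}) \leq 2\nu(x_0)/D$ guarantees that this tubular neighborhood stays well inside $B(x_0,\nu(x_0))$, and the nontriviality of the induced map on $\pi_1$ is preserved under the inclusion into $M_n$ since the tubular neighborhood is a deformation retract of $B(x_0,\nu(x_0))$.
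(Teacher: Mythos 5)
Your proof is correct and follows essentially the same route as the paper: a case split along the two alternatives of Proposition~\ref{prop:notrivialpi1}, followed by the observation that the homotopical nontriviality excludes the souls (point, $2$-sphere) giving trivial $\pi_1$, leaving exactly the three homeomorphism types listed. You spell out the construction of $\mathcal V_0$ as a small pulled-back tubular neighbourhood of the soul together with the deformation retraction argument, whereas the paper states the classification of $B(x_0,\nu_{x_0})$ in the corollary's proof and postpones the explicit choice of $\mathcal V_0$ as a metric $\delta$-neighbourhood (with $\delta<\nu_0/D$) to the paragraphs just before Assertion~\ref{aff:abelcov}; this is a presentational difference only.
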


\begin{proof}
We recall that each component of $W_n$ is a solid torus.
If one of them is homotopically non-trivial, then we choose it.
Otherwise, by
Proposition~\ref{prop:notrivialpi1}, there exists a  point 
$x_0\in {M_n}\setminus \operatorname{int} (W_n)$  such that 
$B(x_0,\nu_{x_0})$ is homotopically non-trivial; 
one of the remarks following Proposition~\ref{local structure} shows that
$B(x_0,\nu_{x_0})$ is necessarily a solid torus, 
a thickened torus or a twisted $I$-bundle over the  Klein bottle. Indeed,
$S_0$ can be neither a point nor a $2$-sphere, otherwise $B(x_0,\nu_{x_0})$
would be homeomorphic to  $B^3$ or $S^2\times I$, which have trivial fundamental group.
\end{proof}

As ${\mathcal V}_0$ is not contained in any $3$-ball,  
each component $Y$ of its complement is
irreducible, hence a \emph{Haken manifold} 
whose boundary is a union of tori. In particular, $Y$ admits a geometric
decomposition. Here is an important consequence of Thurston's hyperbolic 
Dehn filling theorem ({cf.~\cite[Prop.~10.17]{blp},
\cite[Prop.~9.36]{BMP}}):

 \begin{prop}\label{prop:haken}
Let $Y$ be a Haken 3-manifold whose boundary is a union of tori.
Assume that any manifold obtained from   $Y$ by Dehn filling has vanishing simplicial volume. Then $Y$ is a graph manifold.
\end{prop}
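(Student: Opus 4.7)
The plan is to prove the contrapositive: assuming $Y$ is not a graph manifold, I will construct a Dehn filling of $Y$ with positive simplicial volume, contradicting the hypothesis. Since $Y$ is Haken with toroidal boundary, Thurston's hyperbolisation theorem for Haken manifolds yields a canonical geometric decomposition of $Y$ along a finite collection $\mathcal{T}$ of essential tori into Seifert fibred and finite-volume hyperbolic pieces; because $Y$ is not a graph manifold, at least one piece $H$ is hyperbolic.

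Partition the components of $\partial Y$ into those contained in $\partial H$, say $T_1,\ldots,T_p$, and the remaining components $T_{p+1},\ldots,T_q$ which lie in other pieces. By Thurston's hyperbolic Dehn filling theorem, for each cusp $T_i$ of $H$ with $1\leq i\leq p$, all but finitely many filling slopes produce a finite-volume hyperbolic manifold; choose such slopes $\alpha_1,\ldots,\alpha_p$ so that the partial filling of $H$ along $T_1,\ldots,T_p$ yields a hyperbolic manifold $\hat H$ with $0<\vol(\hat H)<\vol(H)$, whose remaining boundary consists precisely of the JSJ tori of $\mathcal{T}$ that border $H$ in $Y$, all of which are cusp tori of $\hat H$ and hence incompressible in $\hat H$.

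For the remaining boundary components $T_{p+1},\ldots,T_q$, I choose slopes generically. Each such $T_j$ lies in some piece $P\neq H$, and I need the JSJ tori that separate $\hat H$ from the rest of $Y'=Y(\alpha)$ to remain incompressible in $Y'$. On the $\hat H$ side incompressibility is automatic by hyperbolicity; on the opposite side the adjacent piece $P$ is altered only by fillings on its own boundary components in $\partial Y$. If $P$ is hyperbolic, Thurston's theorem again produces a cofinite set of admissible slopes preserving the incompressibility of each JSJ torus bounding $P$. If $P$ is Seifert fibred, at most finitely many slopes on each boundary component can make the filled piece collapse to a solid torus or otherwise compress an adjacent JSJ torus, so they can be avoided. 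Hence slopes $\alpha_{p+1},\ldots,\alpha_q$ may be selected so that the JSJ tori bordering $\hat H$ are incompressible in $Y'$.

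With $\hat H\subset Y'$ an embedded finite-volume hyperbolic submanifold whose boundary tori are incompressible in $Y'$, the additivity of simplicial volume under gluing along incompressible tori (Gromov--Soma--Kuessner) gives $\|Y'\|\geq \vol(\hat H)/v_3>0$, contradicting the hypothesis that every Dehn filling of $Y$ has vanishing simplicial volume. Thus $Y$ must be a graph manifold. The main obstacle is the genericity argument of the third paragraph: ensuring that the JSJ tori separating $\hat H$ from the rest of $Y'$ remain incompressible after filling requires combining Thurston's hyperbolic Dehn filling theorem with a careful accounting of the exceptional slopes coming from Seifert pieces adjacent to $H$.
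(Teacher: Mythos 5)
Your proof is essentially the one the paper alludes to in its footnote (``geometrisation of Haken manifolds, additivity of the simplicial volume, and Thurston's hyperbolic Dehn filling theorem''), and the overall structure is sound: extract a hyperbolic JSJ piece, fill its cusps hyperbolically, and invoke the lower bound $\|Y'\|\geq \vol(\hat H)/v_3$ coming from the presence of a $\pi_1$-injectively embedded finite-volume hyperbolic submanifold. Since the paper itself cites the proof to external references rather than giving it, there is nothing sharper to compare against.

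There is, however, a real imprecision in your third paragraph. You verify that each JSJ torus $T$ bordering $\hat H$ is incompressible in the single adjacent filled piece $\hat P$, but what is actually needed is that $T$ be incompressible in $Y'$, equivalently in the \emph{entire} side $Y'\setminus\operatorname{int}(\hat H)$. A compression disc for $T$ could a priori pass through several of the filled pieces, so piece-by-piece incompressibility does not immediately give global incompressibility. The standard way to close this is to arrange that \emph{every} filled JSJ piece (not only those adjacent to $H$) is irreducible with incompressible boundary --- excluding finitely many slopes per boundary torus suffices, by Thurston's Dehn filling theorem for the hyperbolic pieces and by ruling out solid-torus fillings for the Seifert pieces --- and then invoke the fact, proved by an innermost-disc argument, that a manifold assembled from irreducible pieces with incompressible boundary, glued along boundary tori, is irreducible and the gluing tori remain incompressible. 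With that observation inserted, your genericity argument goes through and the proof is correct.
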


In order to prove that $M_n$ is a graph manifold, it is sufficient to show that each component of $M_n\setminus {\mathcal V}_0$ is a graph manifold.
To conclude the proof of Theorem~\ref{TH:GRAPH}, it suffices to show the following proposition:

\begin{prop}\label{aff:volumesimp}
For $n$ large enough, one can find a submanifold ${\mathcal V}_0$ as above such that 
every Dehn filling on each component  $Y$ of $M_n \setminus \operatorname{int}({\mathcal V}_0)$ has vanishing simplicial volume.
\end{prop}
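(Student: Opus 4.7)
The plan is a \emph{relative} covering argument. I would fix ${\mathcal V}_0$ as provided by Corollary~\ref{cor:nontrivial}, let $Y$ be any component of $M_n\setminus\operatorname{int}({\mathcal V}_0)$, and let $\hat Y$ denote any closed manifold obtained from $Y$ by Dehn filling along $\partial Y$. The aim is to cover $\hat Y$ by open sets of dimension at most~$2$ whose fundamental groups have amenable image in $\pi_1(\hat Y)$; Gromov's vanishing theorem then gives that $\hat Y$ has vanishing simplicial volume, Proposition~\ref{prop:haken} identifies $Y$ as a graph manifold (it is already known to be Haken with toral boundary by the discussion preceding Proposition~\ref{aff:volumesimp}), and since $M_n=Y\cup{\mathcal V}_0$ with ${\mathcal V}_0$ a solid torus, thickened torus, or twisted $I$-bundle over a Klein bottle glued along tori, $M_n$ is a graph manifold.

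The cover would have three types of open sets. First, each torus component of $\partial Y$ is capped by a slightly thickened filling solid torus, with $\pi_1=\ZZ$. Second, each component of $W_n$ contained in $Y$ is itself a solid torus (after the reductions of Subsection~4.1), hence contributes $\pi_1=\ZZ$. Third, the thin part $(M_n\setminus\operatorname{int}(W_n))\cap Y$ would be covered by local models $B(x,\nu(x))$, each with virtually abelian $\pi_1$ by Proposition~\ref{local structure} and the remarks following it. Since amenability passes to quotients, all three families contribute open sets with amenable image in $\pi_1(\hat Y)$.

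To force the dimension to be at most~$2$, I would imitate the construction of Subsection~4.2, replacing the ``trivial image'' radius $\tr(x)$ by the analogous ``amenable image'' radius
\[
\ab(x)=\sup\left\{r\,\left|\,
\begin{array}{l}
\pi_1(B(x,r))\to\pi_1(\hat Y)\text{ has amenable image and}\\
B(x,r)\subset B(x',r')\text{ with curvature}\geq -(r')^{-2}\text{ and}\\
\operatorname{vol}(B(x',r'))\leq (r')^3/D
\end{array}
\right.\right\},
\]
which satisfies $\ab(x)\geq\nu(x)$ for free. The maximal packing $r(x)=\min(\ab(x)/11,1)$ together with Lemmas~\ref{lem:cov:deux} and~\ref{lem:seq} would produce a finite covering $\{V_i\}$ of $\hat Y$ with universally bounded local multiplicity. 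Lemma~\ref{lem:boundlip} then yields a Lipschitz partition-of-unity map $f\!:\hat Y\to K$ to the nerve, and the inductive push-down of Lemmas~\ref{lem:homotopelowerskel}, \ref{lem:volsubcubic}, and~\ref{lem:2skeleton}---whose volume and curvature estimates survive restriction from $M_n$ to $Y$---homotopes $f$ into the $2$-skeleton of $K$, after which Gromov's vanishing theorem concludes.

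The main obstacle will be the bookkeeping at the Dehn filling tori. Each ball $B(x_j,r_j)$ centered near a boundary component of $Y$ must be amalgamated with the corresponding filling solid torus $U$ into a single open set, playing the role that $V_j=B(x_j,r_j)\cup W_{n,j}$ played in Subsection~4.2, so that the multiplicity of the covering is not inflated at the boundary. This is possible because the reductions of Subsection~4.1 produced, along $\partial W_n$, arbitrarily large collars of almost-horospherical tori isometric to pieces of a hyperbolic cusp; by placing centers $x_j$ inside these collars and retracting both the filling solid torus and the adjacent local models along concentric almost-horospherical tori, one recovers the combinatorial picture of the proof of Proposition~\ref{prop:notrivialpi1}, and the rest of the argument transfers without essential change.
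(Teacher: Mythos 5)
Your overall strategy---a second, relative covering argument in which the triviality radius is replaced by an amenability radius, followed by Gromov's vanishing theorem and Proposition~\ref{prop:haken}---is exactly the paper's. But there is a genuine gap at the step you dismiss as coming ``for free''. The inequality $\ab(x)\geq\nu(x)$ is immediate only for the image of $\pi_1(B(x,\nu(x)))$ in $\pi_1(M_n)$; what the covering of $\hat Y$ actually requires is amenability of the image of the fundamental group of each component of $B(x,r)\cap Y$ in $\pi_1(\hat Y)$, equivalently in $\pi_1(M_n\setminus\mathcal{V}_0)$ since Dehn filling induces a surjection on $\pi_1$. When $B(x,\nu(x))$ meets or contains $\mathcal{V}_0$, the group $\pi_1(B(x,\nu(x))\cap Y)$ is \emph{not} a quotient of the virtually abelian group $\pi_1(B(x,\nu(x)))$: for instance, if $B(x,\nu(x))$ is a $3$-ball containing the solid torus $\mathcal{V}_0$, then $B(x,\nu(x))\cap Y$ is a knot exterior whose image in $\pi_1(Y)$ need not be amenable. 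Without a uniform lower bound $r_i\geq c\,\nu_{x_i}$, the volume estimate of Lemma~\ref{lem:volsubcubic}, and hence the push of the nerve map into the $2$-skeleton, breaks down. The paper repairs this precisely by \emph{choosing} $\mathcal{V}_0$: the point $x_0$ is taken so that $\nu_{x_0}$ is at least half the supremum of $\nu(x)$ over homotopically nontrivial local models, and $\mathcal{V}_0$ is a $\delta$-neighbourhood of the soul with $\delta<\nu_0/D$, so that $\mathcal{V}_0\subset B(x_0,\frac19 r_0)$. Lemma~\ref{lem:uniformabcnu} then shows that a ball $B(x_i,\nu_{x_i}/11)$ meeting $\mathcal{V}_0$ either leads to a contradiction or forces $d(x_i,\mathcal{V}_0)\geq c'\nu_{x_i}$, restoring the bound $r_i\geq c\,\nu_{x_i}$. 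Taking $\mathcal{V}_0$ to be an arbitrary piece supplied by Corollary~\ref{cor:nontrivial}, as you do, loses exactly this.

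A second, lesser, inaccuracy concerns the boundary bookkeeping: the Dehn-filling tori are the components of $\partial\mathcal{V}_0$, not of $\partial W_n$, so when $\mathcal{V}_0$ is a soul neighbourhood there is no almost-horospherical collar there in which to place centres. The paper instead covers all of $M_n$, keeps $\mathcal{V}_0$ inside the single ball $V_0=B(x_0,r_0)$ with the covering of dimension zero on $\mathcal{V}_0$ (this is the role of Lemma~\ref{lem:rkover3} and of the remark $\mathcal{V}_0\subset B(x_0,\frac19 r_0)$), and only afterwards restricts the covering to $\hat Y$. Your cusp amalgamation is the right picture only for the components of $W_n$ and for the case where $\mathcal{V}_0$ is itself such a component.
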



We choose the set ${\mathcal V}_0$ as follows: 
\begin{itemize}
\item If some component of $W_n$ is homotopically non-trivial, then we choose it as ${\mathcal V}_0$.
\item  If all components of $W_n$ are homotopically trivial, then there exists a point $x\in 
{M_n} \setminus \operatorname{int} (W_n)$ such that 
$B(x,\nu_{x})$ is homotopically non-trivial. We choose $x_0 \in {M_n}\setminus \operatorname{int} (W_n)$ such that
$$
\nu_0=\nu_{x_0}\geq \frac12\sup\{ \nu(x) \mid  \pi_1(B(x,\nu(x)))\to\pi_1(M_n)\textrm{\ is non-trivial}\}.
$$

Let $S_0$ be the soul of the local model $B(x_0,\nu_{0})$. We choose
${\mathcal V}_0$ to be the  metric open $\delta$-neighbourhood 
with $0 <\delta <\frac{\nu_0}{D}$.
After possibly shrinking $W_n$, one has ${\mathcal V}_0\cap W_n=\emptyset$,
as $\nu_0\leq 1$.
\end{itemize}

\medskip

We say that a subset $U \subset M_n$ is
\bydef{virtually abelian relatively to} ${\mathcal V}_0$ if the image in 
$\pi_1(M_n\setminus {\mathcal V}_0)$ of the fundamental group of each connected component of $U \cap (M_n\setminus {\mathcal V}_0)$ 
is virtually abelian.

We set:
$$
\ab(x)=\sup \left\{ r\, \left\vert\begin{array}{l}
                                 B(x,r)\textrm{ is virtually abelian relatively to } {\mathcal V}_0 \textrm{ and }\\
				 B(x,r) \textrm{ is contained in a ball } B(x',r') \textrm{ with }\\
				\textrm{curvature }  \geq -\frac1{(r')^2}\textrm{ and } 
				\frac{\vol(B(x',r'))}{(r')^3}\leq 1/D
				 \end{array}
\right.
\right\}
$$
and
$$
r(x)=\min \{\frac1{11}\ab(x),1\}.
$$
We are now led to prove the following assertion:

\begin{aff}\label{aff:abelcov}
With this choice of ${\mathcal V}_0$, 
for $n$ large enough, $M_n$ can be covered by
a finite collection of open sets $U_i$ such that:
\begin{itemize}
 \item Each $U_i$ is either contained in a component of $W_n$ or in a ball $ B(x_i,r(x_i))$ for some $x_i\in M^-_n(\varepsilon_n)$. 
In particular, $U_i$ 
 is virtually abelian relatively to $ {\mathcal V}_0$.
 \item The dimension of this covering is not greater than $2$, and it is zero on ${\mathcal V}_0$.
\end{itemize}
\end{aff}

Let us first show why this assertion implies Proposition~\ref {aff:volumesimp}.

\begin{proof}
The covering described in the assertion induces naturally a covering on every closed and orientable manifold
 $\hat Y$, obtained by gluing solid tori to
$\partial Y$.  It is a $2$-dimensional covering by open sets which are virtually abelian and thus amenable in $\hat Y$. Gromov's vanishing theorem \cite[\S 3.1]{GromovIHES}, see also
\cite{Ivanov}, then implies that the simplicial volume of $\hat Y$ vanishes, which proves 
 Proposition~\ref {aff:volumesimp}.
\end{proof}

We now prove Assertion~\ref{aff:abelcov}.
The argument for the construction of a $2$-dimensional 
covering by abelian open sets is similar to the one used in the proof of 
Proposition~\ref{prop:notrivialpi1}, replacing everywhere
the triviality radius $\tr$ by the abelianity radius $\ab$. 
There are, however, a few differences, which we now point out.

If ${\mathcal V}_0$ is a connected component of $W_n$, then for $n$
large enough we choose points $x_0,x_1,\ldots,x_q\in\partial W_n$, with $x_0 \in \partial {\mathcal
V}_0$ in such a way that
\begin{itemize}
 \item  Every boundary component  of $W_n$ contains
exactly one of the  $x_j$'s.
\item The balls $B(x_j,1)$ are pairwise disjoint.
\item Every $B(x_j,1)$ has normalised volume $\leq\frac1D$ and sectional curvature close to $-1$.
\item Every $B(x_j,1)$ is contained in a thickened torus (which implies that this ball is abelian).
\end{itemize}
Furthermore, going sufficiently far in the cusp and taking
$n$ large enough, one can assume that $B(x_j, \frac19 r(x_j))$ contains an almost horospherical torus corresponding to a boundary component of $W_n$.  
In this case the proof previously done applies without any change, since the dimension of the original covering and all those obtained by shrinking 
is zero on $W_n$ (or on a set obtained by shrinking $W_n$).

From now on we shall assume that \emph{all connected components of $W_n$ are homotopically trivial}. 
We then choose $x_0 \in {\mathcal V}_0 \subset {M_n}\setminus \operatorname{int} (W_n)$ as above, and points
$x_1,\ldots,x_q\in\partial W_n$ as before. 

We complete the sequence $x_0, x_1,\ldots,x_q$ to a maximal finite sequence
$x_0,x_1,x_2,\ldots, x_p$ such that the balls $B(x_i,{\frac14 {r(x_i)}})$
 are disjoint.

We set $r_i=r(x_i)$,  and, if $W_{n,1},\ldots W_{n,q}$ are the connected components of
$W_n$, then we set
\begin{itemize}
 	\item $V_0:=B(x_0,r_0) $.
	\item $V_i:=B(x_i,r_i)\cup W_{n,i}$, for $i=1,\ldots,q$.
	\item $V_i:=B(x_i,r_i) \setminus {\mathcal V}_0$ for $i=q+1,\ldots, p$.
\end{itemize}
{
After possibly shrinking $W_{n}$, we have ${\mathcal V}_0\cap B(x_i,r_i)=\emptyset$ for $i=1,\ldots,q$,
since $r_i\leq 1$ and $\mathcal V_0\cap W_n=\emptyset$. 
It follows that $\mathcal V_0\cap V_i=\emptyset$ for $i\neq 0$.
}

Under the hypothesis that the $W_n$ are homotopically trivial, 
the following two lemmas deal with the difference with the previous proofs.

\begin{lem} \label{lem:rkover3}
Each $x\in M_n$ belongs to some $V_k$ such that $d(x,\partial
V_k)\geq \frac 13 r_k$.
\end{lem}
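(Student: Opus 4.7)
Plan: The statement is the exact analogue, in the present setup, of Lemma~\ref{lem:seq}, so I would follow the same scheme (maximality of the sequence $\{x_k\}$ combined with Lemma~\ref{lem:cov:deux}), handling by case analysis the new features: the distinguished open set $\mathcal{V}_0$ and the enlargements $V_i=B(x_i,r_i)\cup W_{n,i}$ for $i=1,\ldots,q$.

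First, if $x\in\mathcal{V}_0$, I use that $\mathcal V_0$ is the $\delta$-neighbourhood of the soul $S_0$ with $\delta<\nu_0/D$, and that the approximation of the local model gives $d(x_0,S_0)\le\nu_0/D$; thus $\mathcal V_0\subset B(x_0,2\nu_0/D)$. Since $\ab(x_0)\ge\nu_0$ yields $r_0\ge\min\{\nu_0/11,1\}$, for $D$ large enough we have $2\nu_0/D<r_0/3$, so $x\in B(x_0,r_0/3)\subset V_0$ and $d(x,\partial V_0)\ge r_0-2\nu_0/D\ge r_0/3$. If instead $x\in W_{n,i}$ for some $i\in\{1,\ldots,q\}$, then $x\in V_i$, and the almost horospherical torus sitting inside $B(x_i,\tfrac19 r_i)$ allows one to retract $W_{n,i}$ so that $\partial V_i$ stays outside $B(x_i,\tfrac23 r_i)$; the bound $d(x,\partial V_i)\ge r_i/3$ then follows as in Lemma~\ref{lem:seq}.

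For the main case $x\in M_n\setminus(\mathcal V_0\cup W_n)$, maximality of $\{x_k\}$ produces an index $j$ with $B(x,r(x)/4)\cap B(x_j,r_j/4)\ne\emptyset$. Lemma~\ref{lem:cov:deux} then gives $r(x)\le\tfrac43 r_j$ and $d(x,x_j)\le\tfrac{7}{12} r_j$, hence $x\in B(x_j,\tfrac23 r_j)$. For $j\in\{0,1,\ldots,q\}$ the two previous cases apply. The delicate sub-case is $j\ge q+1$, where $V_j=B(x_j,r_j)\setminus\mathcal V_0$ carries an extra piece of boundary along $\partial\mathcal V_0$; to handle it I would show $B(x_j,r_j)\cap\mathcal V_0=\emptyset$ for $D$ sufficiently large. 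This separation combines $\diam(\mathcal V_0)\le 2\nu_0/D$ with the disjointness estimate $d(x_0,x_j)\ge\tfrac14(r_0+r_j)$ and with the near-maximality of $\nu_0$ among the $\nu(x)$'s of homotopically non-trivial local models, which forces $r_0\gtrsim\nu_0$ to dominate $2\nu_0/D$ in the resulting inequality. Once $B(x_j,r_j)$ avoids $\mathcal V_0$, the topological boundary $\partial V_j$ reduces near $x$ to a subset of $\partial B(x_j,r_j)$ and $d(x,\partial V_j)\ge r_j/3$ is immediate.

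The main obstacle is precisely this separation $B(x_j,r_j)\cap\mathcal V_0=\emptyset$ for $j\ge q+1$: it requires careful bookkeeping of the scales $\delta$, $\nu_0$, $r_0$, $r_j$ and uses both the smallness of the neighbourhood defining $\mathcal V_0$ and the optimality in the choice of $\nu_0$. Modulo this point, everything is a transparent adaptation of Lemma~\ref{lem:seq}.
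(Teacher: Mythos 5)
Your case analysis starts along the right lines (splitting off $x\in\mathcal V_0$, $x\in W_n$, and the generic case, then invoking maximality and Lemma~\ref{lem:cov:deux}), but the step you flag as delicate is in fact a genuine gap: the separation $B(x_j,r_j)\cap\mathcal V_0=\emptyset$ for $j\ge q+1$ is \emph{false} in general, and no amount of bookkeeping will make it hold. Indeed, when $B(x_j,r_j)\cap B(x_0,r_0)\neq\emptyset$, Lemma~\ref{lem:cov:deux} forces $r_j$ and $r_0$ to be comparable ($\tfrac34\le r_0/r_j\le\tfrac43$), while the disjointness of the quarter-balls only gives $d(x_0,x_j)\ge\tfrac14(r_0+r_j)\approx\tfrac12 r_0$. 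Since $\mathcal V_0\subset B(x_0,3\nu_0/D)$ is a tiny set clustered near $x_0$ (its size goes to $0$ as $D\to\infty$ while $r_0\ge\nu_0/11$ stays bounded below in terms of $\nu_0$), a ball $B(x_j,r_j)$ of radius comparable to $r_0$ centred at distance $\approx r_0/2$ from $x_0$ will typically \emph{contain} $\mathcal V_0$; the inequality you would need, $\tfrac14 r_0-3\nu_0/D\ge\tfrac34 r_j$, is incompatible with $r_j\ge\tfrac34 r_0$.

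The paper sidesteps this by splitting cases on the position of $x$, not on the disjointness of the cover. After disposing of $x\in W_n$ and of $x\in B(x_0,\tfrac23 r_0)$ (for which one simply takes $k=0$), it assumes $x\notin B(x_0,\tfrac23 r_0)$, finds $k$ with $x\in B(x_k,\tfrac23 r_k)$, and then — in the remaining subcase $V_k\cap V_0\neq\emptyset$ — estimates the distance from the specific point $x$ to $\mathcal V_0$ directly: using $\mathcal V_0\subset B(x_0,\tfrac19 r_0)$ and $d(x,x_0)\ge\tfrac23 r_0$ one gets $d(x,\mathcal V_0)\ge\tfrac59 r_0$, and then $r_0\ge\tfrac34 r_k$ from Lemma~\ref{lem:cov:deux} yields $d(x,\mathcal V_0)\ge\tfrac5{12}r_k>\tfrac13 r_k$. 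This bounds the distance from $x$ to the extra boundary piece $\partial\mathcal V_0$ of $V_k$ without ever requiring $B(x_k,r_k)$ to avoid $\mathcal V_0$ — which, as noted, it need not. Your proposal is therefore incomplete: the separation claim must be replaced by this pointwise distance estimate using the hypothesis $x\notin B(x_0,\tfrac23 r_0)$.
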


This lemma is used in the control of the Lipschitz constant of  the characteristic map, Lemma~\ref{lem:boundlip}.
In order to prove it, we begin with the following remark:
\begin{rem}
 If $n$ is large enough, we have ${\mathcal V}_0\subset B(x_0,\frac{r_0}9)$.
\end{rem}

The bounds on the
diameter of $S_0$, the distance to the base point, and the radius of the neighbourhood,
give ${\mathcal V}_0\subset B(x_0,\frac{3\nu_{x_0}}D)$. Then the remark follows.

\begin{proof}[Proof of Lemma~\ref{lem:rkover3}]
If  $x\in W_n$, then there is nothing to prove. We thus assume that
$x\in M_n\setminus W_n$.
If $x\in B(x_0,{\frac23r_0})$ we may choose $k=0$.
Let us then assume that $x\not\in B(x_0,{\frac23r_0})$.
There exists $k$ such that $x\in B(x_k,{\frac23r_k})$.
If $V_k$ and $V_0$ are disjoint, then we are done. Hence
we assume that $V_k\cap V_0\neq\emptyset$.
By the previous remark, one has:
\[ d(x,{\mathcal V}_0)\geq d(x,x_0)-\frac19r_0
\geq \frac23r_0-\frac19r_0 \geq \frac34\cdot\frac59r_k>\frac13r_k.
\]
This implies that  $d(x,\partial V_k)\geq \frac13r_k$.
\end{proof}

The second difference is that  the triviality radius 
satisfies $\tr(x_i)\geq \nu_{x_i}$ by construction. Here we shall prove that
$\ab(x_i)\geq c\,\nu_{x_i}$ for a uniform $c>0$. This is used in the proof of  Lemma~\ref{lem:volsubcubic}, where the inequality
$\vol(B_i)\leq C\frac1D r_i^3$ will still be true, but with a different constant $C$.

\begin{lem}
\label{lem:uniformabcnu}
There exists $c>0$ such that $r_i\geq c\, \nu_{x_i}$ for all $i$.
\end{lem}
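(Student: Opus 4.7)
\bigskip
\noindent\textbf{Plan of proof.} It suffices to exhibit a uniform $c'>0$ with $\ab(x_i)\geq c'\nu_{x_i}$; the conclusion then follows with $c=\min\{c'/11,1\}$ because $\nu_{x_i}\leq\rho(x_i)\leq 1$. The geometric requirements in the definition of $\ab$ are discharged by taking the witness pair $(x',r')=(x_i,\nu_{x_i})$: the bound $\nu_{x_i}\leq\rho(x_i)$ yields sectional curvature $\geq -\nu_{x_i}^{-2}$ on the ambient ball, and Proposition~\ref{local structure}(3) gives normalised volume $\leq 1/D$. The content of the lemma is therefore the topological condition that $B(x_i,c'\nu_{x_i})$ be virtually abelian relative to $\mathcal V_0$. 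Throughout, I use the Remark following Proposition~\ref{local structure}: since $M$ is not $\RP^3$, $B(x_i,\nu_{x_i})$ is homeomorphic to one of $B^3$, $S^1\times D^2$, $S^2\times I$, $T^2\times I$, or the twisted $I$-bundle over the Klein bottle, each of which has virtually abelian fundamental group.

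I would split on whether $B(x_i,\nu_{x_i}/3)$ meets $\mathcal V_0$. In the \emph{far case} $B(x_i,\nu_{x_i}/3)\cap\mathcal V_0=\emptyset$, the ball is its own unique intersection component with $M_n\setminus\mathcal V_0$, and the image of $\pi_1(B(x_i,\nu_{x_i}/3))$ in $\pi_1(M_n\setminus\mathcal V_0)$ factors through the virtually abelian $\pi_1(B(x_i,\nu_{x_i}))$; so $c'=1/3$ suffices. In the \emph{close case} $B(x_i,\nu_{x_i}/3)\cap\mathcal V_0\neq\emptyset$, I first bound $\nu_{x_i}$ by a multiple of $\nu_0$. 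The estimate $\diam(\mathcal V_0)\leq 2\delta+\diam(S_0)\leq 3\nu_0/D$ forces $\mathcal V_0\subset B(x_i,\nu_{x_i})$ as soon as $\nu_{x_i}\geq 9\nu_0/(2D)$; since $\mathcal V_0$ is homotopically nontrivial in $M_n$, so is $B(x_i,\nu_{x_i})$, and the maximality in the choice of $x_0$ then yields $\nu_{x_i}\leq 2\nu_0$. The remaining sub-subcase $\nu_{x_i}<9\nu_0/(2D)$ is automatically small compared to $\nu_0$. Combining with $\mathcal V_0\subset B(x_0,3\nu_0/D)$ and $d(x_i,\mathcal V_0)<\nu_{x_i}/3$, the triangle inequality places $B(x_i,c'\nu_{x_i})\subset B(x_0,\nu_0)$ for $c'=1/10$ and $D$ sufficiently large.

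Inside $B(x_0,\nu_0)$, homotopical nontriviality of $\mathcal V_0$ excludes the soul types \emph{point} and $S^2$, so $S_0$ is a circle, a torus, or a Klein bottle, and $B(x_0,\nu_0)$ is correspondingly a solid torus, a thickened torus, or the twisted $I$-bundle over the Klein bottle. Using the $\mathcal C^2$-approximation $f_{x_0}$ to the local model $X_{x_0}$, for $n$ large $\mathcal V_0$ corresponds to a standard tubular neighborhood of the soul $\overline S$ in $X_{x_0}$, and direct inspection of the three cases shows that $B(x_0,\nu_0)\setminus\mathcal V_0$ is a (possibly disconnected) union of thickened tori, each with $\pi_1\cong\ZZ^2$. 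Each component $C$ of $B(x_i,c'\nu_{x_i})\setminus\mathcal V_0$ lies inside such a component, and the image of $\pi_1(C)\to\pi_1(M_n\setminus\mathcal V_0)$ is contained in the image of a virtually abelian group, hence is virtually abelian, establishing the topological condition.

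The main obstacle is the close case, specifically matching $\mathcal V_0\subset M_n$ with a standard tubular neighborhood of $\overline S$ in the model $X_{x_0}$ under $f_{x_0}$, and then pulling back the topological description of the complement. This is where the freedom $0<\delta<\nu_0/D$ in the choice of $\mathcal V_0$ is essential: by shrinking $\delta$ and taking $n$ large, $f_{x_0}(\mathcal V_0)$ lands well inside a standard tubular neighborhood of $\overline S$, so the topological identification of $B(x_0,\nu_0)\setminus\mathcal V_0$ with the model complement is valid and the resulting constant $c'$ is uniform in $i$ and $n$.
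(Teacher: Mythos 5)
Your overall skeleton matches the paper's --- a dichotomy according to the proximity of $x_i$ to $\mathcal V_0$, driven by the two facts $\diam(\mathcal V_0)\leq 3\nu_0/D$ and the near-maximality $\nu_{x_i}\leq 2\nu_{x_0}$ whenever $\mathcal V_0\subset B(x_i,\nu_{x_i})$ --- but your treatment of the close case is genuinely different. The paper proves that in fact $d(x_i,\mathcal V_0)>c'\nu_{x_i}$ always holds, using two ingredients you never invoke: the disjointness of the balls $B(x_j,\tfrac14 r_j)$ from the packing construction (which gives $d(x_i,x_0)\geq \tfrac14 r_0$) together with $\mathcal V_0\subset B(x_0,\tfrac19 r_0)$, yielding $d(x_i,\mathcal V_0)\geq\tfrac18 r_0\geq\tfrac1{100}\nu_{x_0}$; the sub-case $\mathcal V_0\not\subset B(x_i,\nu_{x_i})$ is then eliminated by contradiction for $D>30$, and the sub-case $\mathcal V_0\subset B(x_i,\nu_{x_i})$ gives $d(x_i,\mathcal V_0)>\nu_{x_i}/200$. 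You instead permit $B(x_i,c'\nu_{x_i})$ to meet $\mathcal V_0$, trap it inside $B(x_0,\nu_0)$, and identify the components of $B(x_0,\nu_0)\setminus\mathcal V_0$ as thickened tori. Your route costs more topological work (and the care with $\delta$ and the approximation $f_{x_0}$ that you rightly flag), but it buys an explicit verification of the virtually-abelian condition when the ball genuinely overlaps $\mathcal V_0$ --- precisely the verification that is also implicit in the claim $r_0\geq\tfrac1{11}\nu_{x_0}$ ``by construction''.

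The one step that does not close as written is the far case. The asserted factorisation of $\pi_1(B(x_i,\nu_{x_i}/3))\to\pi_1(M_n\setminus\mathcal V_0)$ through $\pi_1(B(x_i,\nu_{x_i}))$ requires a homomorphism $\pi_1(B(x_i,\nu_{x_i}))\to\pi_1(M_n\setminus\mathcal V_0)$, hence requires the \emph{large} ball $B(x_i,\nu_{x_i})$ to be disjoint from $\mathcal V_0$; your hypothesis only excludes $\mathcal V_0$ from the third-sized ball. In the regime $\nu_{x_i}/3\leq d(x_i,\mathcal V_0)<\nu_{x_i}$ a loop in $B(x_i,\nu_{x_i}/3)$ may be contractible in $B(x_i,\nu_{x_i})$ only through $\mathcal V_0$, so nothing is deduced about its class in $\pi_1(M_n\setminus\mathcal V_0)$. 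The repair is to run the dichotomy on whether the full local model $B(x_i,\nu_{x_i})$ meets $\mathcal V_0$: if it does not, the factorisation is legitimate; if it does, you need either the paper's packing-based lower bound on $d(x_i,\mathcal V_0)$ or an extension of your own close-case analysis (the containment $B(x_i,c'\nu_{x_i})\subset B(x_0,\nu_0)$ must then be re-derived from the weaker bound $d(x_i,\mathcal V_0)<\nu_{x_i}$, which again splits according to whether $\mathcal V_0\subset B(x_i,\nu_{x_i})$). As it stands this intermediate regime is unaccounted for.
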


\begin{proof}
One has $r_0\geq \frac1{11}\nu_{x_0}$by construction. For all  $i>0$, if
$B(x_i,\frac{\nu_{x_i}}{11})\cap {\mathcal V}_0=\emptyset$, then
$r_i\geq \frac1{11}\nu_{x_i}$. Hence we assume
$B(x_i,\frac{\nu_{x_i}}{11})\cap {\mathcal V}_0\neq\emptyset$, and we claim that
 $d(x_i,{\mathcal V}_0)>c'\, \nu_{x_i}$ for a uniform  $c'>0$.
Since ${\mathcal V}_0\subset B(x_0,\frac19 r_0)$:
 \begin{equation}
\label{eqn:distancenu0} 
  d(x_i,{\mathcal V}_0)\geq d(x_i,x_0)-\frac19 r_0
 \geq \frac 14r_0-\frac19r_0 >\frac 18r_0 \geq \frac 1{100}\nu_{x_0}.
 \end{equation}
We distinguish two cases, according to whether ${\mathcal V}_0$ is 
contained in $B(x_i,\nu_{x_i})$ or not.

If ${\mathcal V}_0 \subset B(x_i,\nu_{x_i})$, the image of
$\pi_1(B(x_i,\nu_{x_i}))\to\pi_1(M_n)$ cannot be trivial,
since the image of $\pi_1({\mathcal V}_0)\to \pi_1(M_n)$ is not. 
In addition,
\begin{equation}
 \label{eqn:nu0nui}
 \nu_{x_i}\leq 2\,\nu_{x_0},
 \end{equation}
by the choice of $\nu_{x_0}$. Equations (\ref{eqn:distancenu0}) and
(\ref{eqn:nu0nui}) give $d(x_i,{\mathcal V}_0) > \nu_{x_i}/200$.

If ${\mathcal V}_0 \not\subset B(x_i,\nu_{x_i})$, then since
${\mathcal V}_0 \cap B(x_i,\frac{\nu_{x_i}}{11})\neq\emptyset$, we have
$$
\diam ({\mathcal V}_0)+\frac{\nu_{x_i}}{11}\geq \nu_{x_i}.
$$
Consequently, 
$\diam({\mathcal V}_0)/\nu_{x_i}\geq 10/11$. As $\diam({\mathcal V}_0)\leq 3\nu_{x_0}/D$, we deduce that
$$
\frac{\nu_{x_0}}{\nu_{x_i}}\geq \frac{10}{33} D.
$$
Combining with~(\ref{eqn:distancenu0}), we get
$d(x_i,{\mathcal V}_0)\geq \frac{D}{330} \nu_{x_i}$.

For $D> 30$, this contradicts ${\mathcal V}_0 \cap
B(x_i,\frac{\nu_{x_i}}{11})\neq\emptyset$.
\end{proof}

\newpage
\section{Ricci flow with surgery}\label{chir}
In order to apply Theorem~\ref{TH:GRAPH}, we shall
need the following straightforward consequence of Perelman's work~\cite{PerelmanI,
PerelmanII,PerelmanIII}:
\begin{theo}\label{existence sequence}
Let $M$ be a closed, orientable, \irr\  $3$-\var.
\begin{enumerate}
\item[(1)] If $\pi_1M$ is finite, then $M$ is spherical.
\item[(2)] If $\pi_1M$ is infinite, then for every riemannian metric $g_0$ on $M$,
there exists an infinite sequence of riemannian metrics $g_1,\ldots,g_n,\ldots$ with
the following properties:
\begin{enumerate}
\item[(i)] The sequence $(\hat R(g_n))_{n\ge 0}$ is nondecreasing. In particular, it has a limit,
which is greater than or equal to $\hat R(g_0)$.
\item[(ii)] The sequence $(\vol(g_n))_{n\ge 0}$ is bounded.
\item[(iii)] Let $\varepsilon>0$ be a real number and $x_n\in M$ be a sequence such that
for all $n$, $x_n$ is $\varepsilon$-thick with respect to $g_n$. Then the sequence $(M,g_n,x_n)$ subconverges in the $\mathcal C^2$ topology towards some hyperbolic pointed manifold.
\item[(iv)] The sequence $g_n$ has controlled curvature in the sense of Perelman.
\end{enumerate}
\end{enumerate}
\end{theo}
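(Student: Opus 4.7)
The plan is to view this theorem as a packaging of several outputs of Perelman's analysis of the Ricci flow with surgery, with essentially no new analytic input on our part. I would split the proof cleanly into the finite and infinite $\pi_{1}$ cases, treating (1) as a direct citation and concentrating on (2), where I would produce the sequence $(g_{n})$ by rescaling a single Ricci flow with surgery at suitably chosen times $t_{n}\to\infty$.

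For part (1), the hypothesis that $M$ is closed, orientable, irreducible with $\pi_{1}M$ finite reduces us to the elliptisation conjecture. I would appeal to~\cite{PerelmanIII} together with the treatment in~\cite{MorganTian} (and~\cite{cm:question}): under these hypotheses the Ricci flow with surgery becomes extinct in finite time, and the resulting topological information forces $M$ to admit a metric of constant positive curvature, i.e.\ to be spherical.

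For part (2), start the Ricci flow with surgery $(g(t))_{t\ge 0}$ at $g_{0}$. By~\cite{PerelmanII} the flow is defined for all $t\ge 0$, and satisfies (a) a uniform lower bound of the form $R_{\min}(g(t))\ge -3/(2(t+t_{0}))$ coming from the maximum principle applied to $\partial_{t}R=\Delta R+2|\mathrm{Ric}|^{2}$ (surgery only raises $R_{\min}$), (b) a uniform upper bound on the rescaled volume $\vol(g(t))/t^{3/2}$, and (c) the thick--thin decomposition: for each fixed $\varepsilon>0$ and each sequence $t_{n}\to\infty$, if $y_{n}$ is $\varepsilon$-thick with respect to the rescaled metric $t_{n}^{-1}g(t_{n})$, then $(M,t_{n}^{-1}g(t_{n}),y_{n})$ subconverges in $\mathcal{C}^{2}$ to a pointed finite volume hyperbolic 3-manifold; moreover the canonical neighbourhood theorem and Shi-type derivative estimates give a curvature control at every scale where the local volume is not too small. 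Pick any sequence $t_{n}\to\infty$ and set $g_{n}:=t_{n}^{-1}g(t_{n})$. Property (iii) is then (c) above and (iv) is exactly Perelman's canonical neighbourhood/derivative estimate, reformulated in the form given in the definition. Property (ii) is (b), and property (i) reduces, after expanding $\hat R(g_{n})=R_{\min}(g(t_{n}))\,\vol(g(t_{n}))^{2/3}$ and using the monotonicity of $R_{\min}$ under the flow together with the evolution of $\vol$, to the standard fact that along a Ricci flow with surgery the quantity $\hat R$ is nondecreasing (passing from a sequence to a subsequence if necessary to avoid surgery times). To obtain $\hat R(g_{n})\ge\hat R(g_{0})$ one simply includes $g_{0}$ in the sequence.

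The only genuinely nontrivial point is (c), i.e.\ Perelman's structure theorem for the thick part at large times; this is where the no-local-collapsing theorem, the canonical neighbourhood theorem and a hyperbolic rigidity argument are combined in~\cite{PerelmanII}. Everything else is bookkeeping: matching the normalisation in the definition of ``locally controlled curvature in the sense of Perelman'' to the statement of the canonical neighbourhood theorem, and checking that the rescaling $g_{n}=t_{n}^{-1}g(t_{n})$ is compatible with the definitions of $\hat R$ and of the $\varepsilon$-thick part. I would carry out these verifications in the order (iv), (iii), (ii), (i), since each subsequent property either follows from or is formally independent of the previous one.
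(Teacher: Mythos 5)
Your proposal is correct and follows essentially the same strategy as the paper: run Perelman's Ricci flow with surgery starting from $g_0$ (the paper first rescales $g_0$ to a normalised metric $\hat g_0$, which is harmless since $\hat R$ is scale-invariant), then take $g_n$ to be suitable rescalings of $g(t_n)$ at a diverging sequence of times, citing Perelman's long-time structure theorems from \cite{PerelmanII} for properties (i)--(iv) and \cite{PerelmanIII,cm:question,MorganTian} for part (1). The paper's specific choices (integer times, rescaling factor $(4n)^{-1}$ rather than $n^{-1}$) are immaterial normalisations, and your remark that including $g_0$ in the sequence yields the lower bound in (i) is exactly how the statement is arranged.
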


In this section we explain how to deduce Theorem~\ref{existence sequence} from Perelman's results on the Ricci flow. We refer to~\cite{KleinerLott} and~\cite{MorganTian} for the details.

Let $M$ be a closed, orientable $3$-manifold.
R.~Hamilton introduced in~\cite{Hamilton3} the following equation:
$${\frac{dg}{ dt}}=-2\Ric (g),$$
where the unknown $g=g(t)$ is a family of riemannian metrics on $M$ depending on a 
time parameter $t\in \textbf{R}$. A  \bydef{Ricci flow} is a solution to this  equation.

In~\cite{PerelmanII}, Perelman constructs an object he calls \emph{Ricci flow with $\delta$-cutoff},
also known as \emph{Ricci flow with surgery}. It can be viewed as a $1$-parameter family
of (possibly disconnected) riemannian manifolds $(M(t),g(t))$ which satisfies Hamilton's equation
in a weak sense.
The topology of the manifold $M(t)$ is allowed to change at a discrete set of times, the change
being a connected sum decomposition into prime factors, as well as $RP^3$ or $S^2\times S^1$ factors,
and removing components that are spherical or diffeomorphic to $S^2\times S^1$.

Perelman~\cite[\S 1--5]{PerelmanII} (see also~\cite[\S 58--80]{KleinerLott},
\cite[Chapters 14--17]{MorganTian})  shows that for every riemannian metric $g_0$ on $M$, there exists a Ricci flow with
surgery satisfying the initial condition $(M(0),g(0))=(M,g_0)$. It may happen that $M(t)$ becomes
empty for some finite time $t$; in this case, $M$ is a connected sum of spherical manifolds
and copies of $S^2\times S^1$. Such a Ricci flow with surgery is said to become \bydef{extinct}.

From now on we assume that $M$ is \irr. Thus, if some Ricci flow with surgery with initial
manifold $M(0)=M$ becomes extinct, then $M$ is spherical. If $M$ is not diffeomorphic to
$S^3$ and $(M(t),g(t))$ is a Ricci flow with surgery such that $M(0)=M$ and which does not
become extinct, then for each time $t$, the manifold $M(t)$ has exactly one component
diffeomorphic to $M$, the others being copies of $S^3$. Thus we get a $1$-parameter family
of metrics on $M$, which we still denote by $g(t)$, defined for all $t\ge 0$. (There is some
freedom for the choice of diffeomorphisms between $M$ and the various $M(t)$'s, but the
following discussion does not depend on this choice.)

If the metric $g_0$ has positive scalar curvature, then a maximal principle argument
shows that any Ricci flow with surgery with initial metric $g_0$ becomes extinct in finite time. Thus $M$ is spherical. The same conclusion holds if $\pi_1M$ is finite
(see~\cite{PerelmanIII}, \cite[Chapter~18]{MorganTian}, \cite{cm:question}.)

If $\pi_1M$ is infinite, then Ricci flow with surgery cannot become extinct,
and $M$ cannot admit any metric of positive scalar curvature. Let $g_0$
be any riemannian metric on $M$. By scaling, we get a \bydef{normalised} metric $\hat g_0$
(i.e.~the absolute value of its sectional curvature is  bounded above by $1$, 
and each ball of radius  $1$ has volume greater than or equal to half of the volume of the Euclidean unit ball.) Starting with the initial metric $\hat g_0$, we get a family of metrics
$\{g(t)\}_{t\ge0}$. For all integers $n\ge 1$, set
$g_n:=(4n)^{-1}g(n)$. Then it is proved in~\cite[\S 6--7]{PerelmanII} (see~\cite[\S 81--89]{KleinerLott}
for details) that the sequence $\{g_n\}_{n\ge 0}$ satisfies Properties~(ii)--(iv) of the
conclusion of Theorem~\ref{existence sequence}.\footnote{
An immaterial difference between our statement and Perelman's is that we use the rescaling
factor $(4t)^{-1}$ instead of $t^{-1}$ in order to get limits of sectional curvature $-1$ rather than $-1/4$.}
Moreover, the function $t\mapsto \hat R(g(t))$ is nondecreasing~\cite[\S 7.1]{PerelmanII}. Since $\hat R$ is scale invariant,
we have $\hat R(g_0)= \hat R(\hat g_0)$, and $\hat R(g_n)=\hat R(g(n))$ for all $n\ge 1$.
Hence the sequence $(\hat R(g_n))_{n\ge 0}$ is nondecreasing. This completes
the proof of Theorem~\ref{existence sequence}.

\section{Applications}\label{sec:geom}

\subsection{A sufficient condition for hyperbolicity}\label{sec:hyp}

The following proposition is Assertion (1) of Theorem~\ref{th:minoration}:

\begin{prop}\label{prop:hyp} Let $M$ be a closed, orientable and irreducible $3$-manifold. Suppose
that the inequality 
 $\overline{R}(M) \leq -6 V_0(M) ^{2/3}$ holds. Then $M$ is hyperbolic and the hyperbolic metric realises $\overline{R}(M)$.
\end{prop}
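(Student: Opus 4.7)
My plan is to prove the proposition in two parts: first establish the reverse inequality $\overline{R}(M) \geq -6V_0(M)^{2/3}$ by constructing near-hyperbolic metrics on $M$, then use Thurston's hyperbolic Dehn filling theorem to force $M$ to be hyperbolic.

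\textbf{Step 1: Lower bound on $\overline{R}(M)$.} Let $H_0 = M \setminus L_0$ be a hyperbolic link complement realising the infimum $V_0(M)$ (which exists by well-ordering of hyperbolic volumes, as noted in the introduction). I would produce a family of Riemannian metrics $g_\epsilon$ on $M$ as follows: truncate each cusp of $H_0$ along a horospherical torus of area $\epsilon$, and glue in a solid torus equipped with a smooth metric interpolating between the cusp geometry and a core circle, with sectional curvatures bounded below by $-1 - \epsilon$. The truncated cusp region and the replacement solid torus each have volume $O(\epsilon)$, so
\[
\vol(g_\epsilon) = V_0(M) + O(\epsilon),\qquad R_{\min}(g_\epsilon) \geq -6 - C\epsilon,
\]
for some universal constant $C$. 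Hence
\[
\hat{R}(g_\epsilon) \geq -(6 + C\epsilon)\,\vol(g_\epsilon)^{2/3} \xrightarrow[\epsilon\to 0]{} -6\,V_0(M)^{2/3},
\]
which gives $\overline{R}(M) \geq -6\,V_0(M)^{2/3}$. Combined with the hypothesis, this yields the equality $\overline{R}(M) = -6\,V_0(M)^{2/3}$.

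\textbf{Step 2: $M$ is hyperbolic.} Suppose for contradiction that $M$ is not hyperbolic. Then $L_0 \neq \emptyset$ and $H_0$ has at least one cusp. Thurston's hyperbolic Dehn filling theorem asserts that all but finitely many slope tuples on the cusps of $H_0$ produce hyperbolic manifolds, and each such filling strictly decreases the volume below $\vol(H_0) = V_0(M)$. Following Salgueiro, the strategy is to exhibit a hyperbolic link $L' \subset M$ with $\vol(M \setminus L') < V_0(M)$, contradicting the definition of $V_0(M)$ as an infimum. The idea is to enlarge $L_0$ to a link $L_0'$ by adding suitable auxiliary components using Myers' theorem, keeping $M\setminus L_0'$ hyperbolic but arranging the geometry of the new cusps so that the meridian slopes of $L_0'$ in $M$ fall within the hyperbolic Dehn filling range. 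Then the Dehn filling of $M\setminus L_0'$ by these meridian slopes must be hyperbolic; but this filling is precisely $M$, contradicting the assumption that $M$ is not hyperbolic. With $M$ then known to be hyperbolic, we have $V_0(M) = \vol(g_{hyp})$ (every non-empty link strictly increases the complement's volume by Thurston), and Step 1 shows the hyperbolic metric realises the maximum of $\hat{R}$.

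\textbf{Main obstacle.} Step 1 is a routine cusp-capping construction once one has the hyperbolic Dehn surgery geometry well in hand. The essential difficulty is Step 2: one must convert the statement of Thurston's theorem, which produces many \emph{different} closed manifolds as Dehn fillings of $H_0$, into a statement about the \emph{single fixed} manifold $M$. The meridian slopes of $L_0$ in $M$ that realise $M$ as a Dehn filling of $H_0$ are rigid, so one cannot modify them directly; the manoeuvre is to add extra components to $L_0$ (exploiting Myers' theorem and the finiteness of the exceptional set per cusp) so as to shift the relevant slopes out of the exceptional range. Orchestrating this enlargement while keeping both the hyperbolicity of the enlarged link complement and the required control on meridian lengths is the delicate part, and is exactly where hyperbolic Dehn filling does its work.
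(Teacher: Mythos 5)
Step 2 of your argument is a genuine gap: as written it nowhere invokes the hypothesis $\overline{R}(M)\leq -6V_0(M)^{2/3}$, so if it were valid it would show that \emph{every} closed, orientable, irreducible $3$-manifold is hyperbolic. The manoeuvre of enlarging $L_0$ to a link $L_0'$ via Myers' theorem and then Dehn filling back along meridian slopes cannot be pushed into the ``hyperbolic range'': the result of that filling is $M$ by construction, so if $M$ is not hyperbolic the filling is exceptional no matter which auxiliary components you add. Your intermediate goal --- exhibiting a hyperbolic link complement in $M$ of volume less than $V_0(M)$ --- is likewise impossible by the very definition of $V_0(M)$, so it cannot serve as a sub-step of a constructive argument; only as the endpoint of a contradiction that must be fuelled by the $\overline{R}$ hypothesis.

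The paper's route sidesteps exactly this trap. Assuming $L_0\neq\emptyset$, it considers the \emph{orbifold} $\mathcal O$ with underlying space $M$, singular locus $L_0$ and local groups $\ZZ/n\ZZ$. By Thurston's hyperbolic Dehn filling theorem in the orbifold setting, $\mathcal O$ is hyperbolic once $n$ is large, and $\vol(\mathcal O)$ is \emph{strictly} less than $\vol(H_0)=V_0(M)$. Passing to cone angle $2\pi/n$ (rather than to an honest surgery) means one always eventually lands in the hyperbolic range regardless of what the meridian slopes of $L_0$ are, and the underlying space stays equal to $M$. Lemma~\ref{lem:desingularisation} (Salgueiro) then smooths the cone metric near $L_0$ with arbitrarily small loss of curvature and volume, producing a genuine Riemannian metric $g_\varepsilon$ on $M$ with $R_{\mathrm{min}}(g_\varepsilon)\geq -6$ and $\vol(g_\varepsilon)<V_0(M)$, hence $\hat R(g_\varepsilon)>-6V_0(M)^{2/3}$. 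This strict inequality contradicts the hypothesis and forces $L_0=\emptyset$, i.e.\ $M=H_0$; the statement about $\overline{R}(M)$ then falls out. Your Step~1 only gives the non-strict bound $\overline{R}(M)\geq -6V_0(M)^{2/3}$ (and even that relies on the Anderson-style cusp-capping estimate which the paper explicitly chooses \emph{not} to reprove), so combined with the hypothesis it yields equality but says nothing about $L_0$; without the strict drop in volume coming from orbifold Dehn filling, the proof cannot close.
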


\begin{proof} 
Let $H_0$ be a hyperbolic manifold homeomorphic to the complement of a link $L_0$ in $M$ and whose volume realises  $V_0(M)$. To prove Proposition~\ref{prop:hyp}, it is sufficient to show that $L_0$ is empty. Let us assume that it is not true and prove that  $M$ carries a metric $g_\varepsilon$ such that
$\vol(g_\varepsilon) < V_0(M)$ and $R_{\textrm{min}}(g_\varepsilon)\ge -6$. This can be done by a direct construction as in~\cite{Anderson1}. We give here a different argument relying on Thurston's hyperbolic Dehn filling theorem.

If $L_0\neq \emptyset$, then we consider the orbifold $\mathcal{O}$ with underlying space $M$,  singular locus $L_0$ local group $\ZZ/n\ZZ$ with $n
>1$ sufficiently large so that the orbifold carries a hyperbolic structure,
by the hyperbolic Dehn filling theorem \cite{Thurston} (cf.~\cite[Appendix B]{bp}). We then desingularise the conical metric on $M$ corresponding to the orbifold structure, in a tubular neighbourhood of $L_0$:

\begin{lem}[Salgueiro \cite{Salgueiro}] \label{lem:desingularisation}
For each $\varepsilon >0$ there exists a Riemannian metric
$g_{\varepsilon}$ on $M$ with sectional curvature bounded below by
 $-1$ and such that $\vol(M,g_{\varepsilon}) < (1 +\varepsilon)^{\frac{5}{2}}\vol(\mathcal{O})$. 
\end{lem}

For completeness we give the proof of this lemma, following \cite[Chap. 3]{Salgueiro}.

\begin{proof}
Let $g_{}$ be the  hyperbolic cone metric on $M$ induced by the hyperbolic orbifold $\mathcal{O}$. 
Let $\mathcal{N} \subset \mathcal{O}$ be a tubular neighbourhood of radius $r_0>0$ around the singular locus $L_0$. In $\mathcal{N}$  the local expression of the singular 
metric $g_{}$ in Fermi (cylindrical) coordinates is: 
$$d s^2=  d r^2+ \left(\frac{1}{n}{\sinh(r)}\right)^2 d\theta^2 + \cosh^2(r) dh^2,
$$ 
where $r\in (0,r_0)$  is the
distance to $L_0$, $h$ is the length parameter along $L_0$,
and $\theta\in(0,2\pi)$ is the rescaled angle parameter.

The deformation depends only on the parameter
$r$ and  consists in replacing the  metric $g_{}$ by a smooth
metric $g'$ which coincides with $g_{}$ outside of $\mathcal{N}$, and has in $\mathcal{N}$ the form 
$$
 ds^2=dr^2+ \phi^2(r)d\theta^2 + \psi(r)^2dh^2,
$$
where for some $\delta=\delta(\varepsilon)>0$ sufficiently small the functions
$$\phi, \psi:[0,r_0-\delta]\to[0,+\infty)$$ are smooth and
satisfy the following properties:
\begin{itemize}
\item [(1)] In a neighbourhood of $0$, $\phi(r)=r$ and $\psi(r)$ is constant. 
\item [(2)] In a neighbourhood of $r_0-\delta$, $\phi(r)=\frac{1}{n}{\sinh(r+\delta)}$ and 
$\psi(r)=\cosh(r+\delta)$. 
\item [(3)] $\forall r \in (0,r_0-\delta), \frac{\phi''(r)}{\phi(r)} \leq 1+\varepsilon, \, \, \, \frac{\psi''(r)}{\psi(r)} \leq 1+\varepsilon$ and $\frac{\phi'(r)\psi'(r)}{\phi(r)\psi(r)} \leq 1+\varepsilon.$
\end{itemize}
The new metric is non-singular by (1), it matches the previous one away from $\mathcal N$ by (2)
and has sectional curvature $\geq -1-\varepsilon$ by (3).

First we deal with the construction of $\phi$. Let $r_1=r_1(\delta)>0$ be defined by $r_1=\frac1n\sinh(r_1+\delta)$. Notice that $r_1\thickapprox \frac 1{n-1}\delta$.
The function $\phi$ is a smooth modification in a neighbourhood of $r_1$ of the piecewise smooth function
$$
r\mapsto \left\{  \begin{array}{ll}
 r &\textrm{on } [0,r_1] \\
\frac1n\sinh(r+\delta) &\textrm{on }  [r_1,r_0-\delta]
\end{array}
\right.
$$
so that:
\begin{itemize}
 \item On $[0,r_1]$, $\phi\geq \frac{r}{1+\varepsilon}$, $\phi'\leq 1$, $\phi''\leq 0$.
 \item On $[r_1,r_0-\delta ]$, $\phi\geq \frac 1n\sinh(r+\delta) \frac{1}{1+\varepsilon}$, $\phi'\leq 
\frac 1n\cosh(r+\delta)$, $\phi''\leq \frac 1n\sinh(r+\delta) $.
\end{itemize}
We choose  $\psi$ satisfying: 
\begin{itemize}
\item On $[0,r_1]$, $\psi$ is constant.
 \item On $[r_1,r_0-\delta ]$, $\psi\geq \cosh(r+\delta)$, $\psi'\leq  \sinh(r+\delta)$, 
$\psi''\leq \cosh(r+\delta) (1+\varepsilon)$.
\end{itemize}
Notice that $\psi'(r_1)=0$ and $\psi'(r_0-\delta)=\sinh(r_0)$, so for a given $\varepsilon> 0$, one has to choose $\delta$ sufficiently small
to achieve the required bound on $\psi''$.

As $\delta \to 0$,  $\vol(M,g') \to \vol(\mathcal{O})$, since $\phi \to \frac{1}{n}{\sinh(r)}$ and 
$\psi \to \cosh^2(r)$. So given $\varepsilon >0$, for a choice of $\delta$ sufficiently small, one obtains a smooth Riemannian metric $g'$ on $M$ with sectional curvature $\geq -1- \varepsilon$
and volume $\vol(M,g') \leq (1+\varepsilon)\vol(\mathcal{O})$. Then the rescaled metric 
$g_{\varepsilon} = \sqrt{1+ \varepsilon}\,g'$ on $M$ has  sectional curvature $\geq -1$ and volume 
$\vol(M,g_{\varepsilon}) \leq  (1+\varepsilon)^{\frac{5}{2}}\vol(\mathcal{O})$.
 \end{proof}

 As $\vol(\mathcal{O}) < \vol(H_0)$, 
for  $\varepsilon>0$ sufficiently small we obtain a Riemannian metric on
 $M$ such that $\vol(M,g_{\varepsilon}) < \vol(H_0)$ and
$R_{\textrm{min}}(g_{\varepsilon}) \geq -6$. In particular 
$$
\hat{R}(g_{\varepsilon}) > -6
\vol(H_0)^{2/3} = -6 V_0(M) ^{2/3}
$$ 
which contradicts the hypothesis. The link $L_0$ is thus empty and we have  $M=H_0$.

\end{proof}

\subsection{Proof of Theorem~\ref{th:minoration}}\label{sec:minoration}

Thanks to previous section, there just remains to show Assertion (2). If
$\pi_1M$ is finite, then by Theorem~\ref{existence sequence}(1), $M$ is spherical,
hence a graph manifold. From now on we assume that $\pi_1M$ is infinite.
In particular, $M$ is not simply connected.

By assumption, there exists a riemannian metric
$g_0$ on  $M$ such that $ -6V_0(M) ^{2/3} < \hat{R}(g_0)$.  Applying Theorem~\ref{existence sequence}(2), we get a sequence of metrics $\{g_n\}$ satisfying properties~(i)--(iv) of this Theorem.
Note that properties~(ii) and~(iv) are respectively Hypotheses~(1) and (3) of Theorem~\ref{TH:GRAPH}. Next we check Hypothesis~(2), which is the content of the
following lemma:

\begin{lem}\label{lem:volume}
If $H$ is any hyperbolic $3$-manifold which appears as a pointed $\mathcal C^2$-limit
of the some subsequence of $(M,g_n)$, then $\vol(H) < V_0(M)$.
\end{lem}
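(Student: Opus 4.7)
The plan is to combine Perelman's monotonicity of $\hat R$ along the rescaled Ricci flow with surgery with the standard estimate on the minimum scalar curvature, and then to compare the limit volume of the $g_n$'s with the volume of a large compact core of $H$.

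First I would establish that
\[
R_{\min}(g_n) \longrightarrow -6 \qquad \text{as } n\to\infty.
\]
The lower bound $\liminf R_{\min}(g_n) \geq -6$ follows from Perelman's a priori estimate on the evolution of the minimum of the scalar curvature under Ricci flow with surgery: one has $R_{\min}(g(t)) \geq -\tfrac{3}{2(t+1/4)}$ along the flow, so after the rescaling $g_n = (4n)^{-1}g(n)$ one gets $R_{\min}(g_n) \geq -6\, n/(n+1/4) \to -6$. The reverse bound $\limsup R_{\min}(g_n) \leq -6$ comes directly from the existence of the hyperbolic limit $H$: by hypothesis there exists a sequence of $\varepsilon$-thick points $x_n$ such that $(M,g_n,x_n) \to (H,*)$ in $\mathcal{C}^2$, and the $\mathcal{C}^2$-convergence forces $R(x_n) \to -6$, so $R_{\min}(g_n) \leq R(x_n) = -6+o(1)$.

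Next I would extract the volume bound. By Theorem \ref{existence sequence}(2)(i), the sequence $\hat R(g_n)$ is nondecreasing, hence
\[
\hat R(g_n) \;\geq\; \hat R(g_0) \;>\; -6\,V_0(M)^{2/3}
\]
for every $n$. Since $R_{\min}(g_n) < 0$ for large $n$ and $\vol(g_n)$ is bounded above (Theorem \ref{existence sequence}(2)(ii)), the sequence $\hat R(g_n) = R_{\min}(g_n)\vol(g_n)^{2/3}$ is also bounded above (by $0$), so it converges to some limit $L \geq \hat R(g_0) > -6\,V_0(M)^{2/3}$. Dividing by $R_{\min}(g_n) \to -6$ gives
\[
\vol(g_n)^{2/3} \;=\; \frac{\hat R(g_n)}{R_{\min}(g_n)} \;\longrightarrow\; -\frac{L}{6} \;<\; V_0(M)^{2/3},
\]
and hence $\lim_{n\to\infty}\vol(g_n) < V_0(M)$.

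Finally I would bound $\vol(H)$ in terms of $\lim\vol(g_n)$ by the usual approximation argument already used for Proposition~\ref{partie epaisse}: fix $\eta>0$ and a compact core $K \subset H$ with $\vol(K) > \vol(H)-\eta$; the $\mathcal{C}^2$-convergence provides, for $n$ large enough, an embedded submanifold $K_n \hookrightarrow (M,g_n)$ with $\vol(K_n) \geq \vol(K)-\eta$. Therefore $\vol(g_n) \geq \vol(K_n) \geq \vol(H) - 2\eta$ for all large $n$, and letting $n\to\infty$ and then $\eta\to 0$ yields $\vol(H) \leq \lim \vol(g_n) < V_0(M)$, as required.

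The main (and essentially only) obstacle is the input from Perelman's theory, namely the uniform lower bound $R_{\min}(g_n) \geq -6 + o(1)$; the rest of the argument is elementary once this and the monotonicity of $\hat R$ are granted.
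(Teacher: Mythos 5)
Your argument is correct, but it is organised differently from the paper's and it imports one extra piece of Perelman's theory that the paper deliberately avoids. The paper argues by contradiction: assuming $\vol(H)\ge V_0(M)$, it takes a compact core $\bar H(\xi)$ with $\vol(\bar H(\xi))\ge(1-\xi)\vol(H)$ and uses the $\mathcal C^2$-convergence to produce, for large $n$, a submanifold of $(M,g_n)$ of volume at least $(1-\xi)^2\vol(H)$ on which the scalar curvature is at most $-6(1-\xi)$. Multiplying the two resulting bounds $R_{\min}(g_n)\le-6(1-\xi)$ and $\vol(g_n)\ge(1-\xi)^2\vol(H)$ gives directly $\lim\hat R(g_n)\le-6\vol(H)^{2/3}\le-6V_0(M)^{2/3}$, contradicting the monotonicity bound $\lim\hat R(g_n)\ge\hat R(g_0)>-6V_0(M)^{2/3}$. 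In particular only the \emph{upper} bound on $R_{\min}(g_n)$ coming from the hyperbolic limit is needed. You instead first pin down $R_{\min}(g_n)\to-6$, and for the lower bound you invoke the a priori estimate $R_{\min}(g(t))\ge-\tfrac{3}{2(t+1/4)}$ along the Ricci flow with surgery; this is true (it is what underlies the monotonicity of $\hat R$ in Perelman's \S 7.1), but it is not among the properties (i)--(iv) granted by Theorem~\ref{existence sequence}, so your proof is not self-contained relative to the stated inputs, whereas the paper's is. What your route buys in exchange is a slightly stronger conclusion: you show that $\vol(g_n)$ actually converges to a limit strictly less than $V_0(M)$ and that $\vol(H)$ is bounded by that limit, rather than merely deriving a contradiction. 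Both arguments are sound; just be aware that the division step $\vol(g_n)^{2/3}=\hat R(g_n)/R_{\min}(g_n)$ genuinely requires the convergence $R_{\min}(g_n)\to-6$ from below, so the extra Perelman input is not optional in your version.
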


\begin{proof}
 Looking for a contradiction, we assume $\vol(H) \geq V_0(M)$.

By monotonicity of $\hat R(g_n)$ and choice of $g_0$, we have
$$\lim \hat{R}(g_n) \ge \hat{R}(g_0) > -6 V_0(M) ^{2/3}.$$

Let $\xi>0$. We choose a compact core  $\bar H(\xi)$ of $H$ such that $\vol (\bar H(\xi))\geq (1-\xi) \vol(H)$. 
Since the convergence is $\mathcal C^2$, there exists, for $n$ sufficiently large, a submanifold  $\bar H_n(\xi)\subset M_n$ with volume at least $ (1-\xi)^2 \vol(H)$ and whose scalar curvature is  less
than or equal to $-6(1-\xi) $. Thus, for $n$  large,  $R_{\textrm{min}}(g_n) \leq -6(1-\xi) $ and $\vol(M_n) \geq \vol(\bar H_n(\xi)) \geq (1-\xi)^2 \vol(H)$. Letting $\xi\to 0$ we get
$$\lim \hat{R}(g_n) \leq -6 \vol(H)^{2/3} \leq -6V_0(M)^{2/3},
$$ which gives the desired contradiction. 
\end{proof}

Hence we can apply Theorem~\ref{TH:GRAPH}. It follows that $M$  contains an incompressible torus or is a graph manifold. In the latter case, $M$
contains an incompressible torus or is Seifert fibred.\qed

\subsection{Proof of Theorem~\ref{corol:geom}}\label{sec:graph}

Let us recall that  $V_0'(M)$ denotes the infimum of the volumes of hyperbolic manifolds which can be embedded in $M$  with and incompressible cusp in $M$ or as complement of a (possibly empty) link in $M$. Since hyperbolic volumes form a well-ordered set, this infimum is in fact a minimum, hence $V_0'(M)>0$.

Let $M$ be a graph manifold. After Cheeger-Gromov
\cite{CheegerGromov1}, one can construct Riemannian metrics on $M$ with sectional curvature pinched between  $-1$ and $1$ whose volume is arbitrarily small. Thus $\overline{R}(M) \geq 0 $. Since $V_0'(M) >0$, this proves the `only if' part of the equivalence.

The `if' part follows from Theorem~\ref{existence sequence} and the following variant of Theorem~\ref{TH:GRAPH}:

\begin{theo}\label{TH:GRAPH2}
Let $M$ be a closed, orientable, non-simply connected, irreducible $3$-ma\-ni\-fold.
Let $g_n$ be a sequence of Riemannian metrics satisfying:
\begin{itemize}
\item[(1)] The sequence $\vol(g_n)$ is bounded.
\item[(2)]  For all  $\varepsilon>0$, if $x_n\in M$ is a sequence such that for all  $n$, $x_n$ is in the $\varepsilon$-thick part of  $(M,g_n)$, then    $(M,g_n,x_n)$ subconverges in the $\mathcal C^2$ topology to  a pointed hyperbolic manifold with volume strictly less than $V'_0(M)$.
\item[(3)] The sequence $(M,g_n)$ has  locally controlled curvature   in the sense of Perelman.
\end{itemize}
Then  $M$ is  a graph manifold.
\end{theo}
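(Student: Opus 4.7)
The plan is to reduce Theorem~\ref{TH:GRAPH2} to Theorem~\ref{TH:GRAPH} by exploiting the fact that the volume hypothesis is strictly stronger. Every hyperbolic link complement in $M$ appears among the submanifolds over which $V'_0(M)$ is defined, so $V'_0(M)\le V_0(M)$. Therefore hypothesis~(2) of Theorem~\ref{TH:GRAPH2} implies hypothesis~(2) of Theorem~\ref{TH:GRAPH}, and applying Theorem~\ref{TH:GRAPH} directly yields the dichotomy: either $M$ is a graph manifold, or $M$ contains an incompressible torus. In the first case we are done, so the remaining task is to rule out the second case under the refined hypothesis.

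To do this I will re-enter the proof of Theorem~\ref{TH:GRAPH} at the point where the incompressible torus is produced, namely conclusion~(i) of Proposition~\ref{prop:abelien}: after passing to a subsequence, there exist $i_0\in\{1,\dots,m\}$ and, for every $n$, an incompressible torus $T\subset\partial \bar H_n^{i_0}$ in $M_n$. I claim this scenario is incompatible with hypothesis~(2) of Theorem~\ref{TH:GRAPH2}. Indeed, $\bar H_n^{i_0}$ is homeomorphic, via the approximation $\phi_n^{i_0}$, to a compact core $\bar H^{i_0}$ of the hyperbolic limit $H^{i_0}$, and under this homeomorphism $T$ corresponds to a cusp torus of $H^{i_0}$. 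Consequently, the inclusion $\bar H_n^{i_0}\hookrightarrow M$ realises $H^{i_0}$ as a hyperbolic submanifold of $M$ possessing at least one incompressible boundary component. The definition of $V'_0(M)$ then forces
\[
\vol(H^{i_0})\ge V'_0(M),
\]
in direct contradiction with hypothesis~(2).

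Once conclusion~(i) of Proposition~\ref{prop:abelien} has been excluded, conclusion~(ii) must hold: every $\bar H_n^i$ is embedded in a solid torus or a $3$-ball of $M_n$. From this point onward, the proof of Theorem~\ref{TH:GRAPH} (Proposition~\ref{prop:existence Wn} in Section~\ref{sec:epaisse}, the local structure results of Section~\ref{sec:mince}, and the two covering arguments of Section~\ref{sec:preuve} culminating in Propositions~\ref{prop:notrivialpi1} and~\ref{aff:volumesimp}) applies verbatim and forces $M$ to be a graph manifold; none of those steps invokes the volume bound beyond what is needed for conclusion~(ii) of Proposition~\ref{prop:abelien}. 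The only delicate point, and the main obstacle in writing out the argument, is to verify carefully that the incompressibility of $T$ inside $M_n$ propagates, through the diffeomorphism $\phi_n^{i_0}$ and the passage to the limit $H^{i_0}$, to the topological statement that $H^{i_0}$ embeds in $M$ as a hyperbolic submanifold with an incompressible boundary component in the exact sense required by the definition of $V'_0(M)$. This is essentially automatic since incompressibility depends only on the isotopy class of the embedding, which is unchanged by $\phi_n^{i_0}$, but it should be spelled out explicitly to make sure the volume comparison with $V'_0(M)$ is legitimate.
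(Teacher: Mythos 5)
Your proposal is correct and is essentially the paper's own proof. The core observation, that a hyperbolic limit $H^{i_0}$ whose approximate core $\bar H_n^{i_0}$ has an incompressible boundary torus in $M$ would have volume at least $V'_0(M)$, is exactly Lemma~\ref{lem:compressible}; the paper states this lemma directly and then says the rest of the proof of Theorem~\ref{TH:GRAPH} goes through unchanged, whereas you phrase it as first invoking Theorem~\ref{TH:GRAPH} (via $V'_0(M)\le V_0(M)$) and then re-entering its proof to exclude Proposition~\ref{prop:abelien}(i) — a purely cosmetic difference in presentation.
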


\begin{proof}
Let $H^1,\ldots,H^m$ be hyperbolic limits given by Proposition~\ref{partie epaisse}  and let $\varepsilon_n\to 0$ be a sequence  chosen as in the remark after  Proposition~\ref{partie epaisse}, or in the beginning of Section \ref{sec:mince} to describe the local structure of the thin part.
As in the proof of Theorem~\ref{TH:GRAPH2}, for each $i$ we fix a compact core $\bar H^i$ of
$H^i$ and for each
 $n$ a submanifold $\bar H_n^i$ and an approximation $\phi_n^i:\bar
H_n^i \to \bar H^i$.
The fact that the volume of each hyperbolic manifold $H^i$ is les than $V_0'(M)$ implies the following result:

\begin{lem}\label{lem:compressible}
Up to taking a subsequence of $M_n$,  for all $i \in{1,\ldots, m}$ 
each component of
$\partial \bar H_n^{i}$ is compressible in $M$ for all $n$.
\end{lem}

\begin{proof} Indeed if the conclusion of Lemma~\ref{lem:compressible} does not hold, then  up to extracting a subsequence, we may assume that there exists an integer $i_0\in{1,\cdots, m}$ such that
$\partial \bar H_n^{i_0}$ contains an incompressible torus for all $n$. 
From the definition of $V_0'(M)$ this would contradict the inequality  $\vol(H^{i_0}) < V_0'(M)$.
\end{proof}

From this lemma on, the proof of Theorem~\ref{TH:GRAPH2} is identical to the proof of Theorem~\ref{TH:GRAPH}.
\end{proof}

\bibliographystyle{plain}
\bibliography{refs}

\noindent Institut Fourier, Universit\'e de Grenoble I, UMR 5582 CNRS-UJF, 38402, Saint-Martin-d'H\`eres, France

\noindent   laurent.bessieres@ujf-grenoble.fr,  g.besson@fourier.ujf-grenoble.fr

\medskip

\noindent Institut Math\'ematique de Toulouse, UMR 5219 CNRS-UPS, Universit\'e Paul
Sabatier, 31062 Toulouse Cedex 9, France.

\noindent  boileau@picard.ups-tlse.fr

\medskip

\noindent Institut de Recherche Math\'ematique Avanc\'ee, Universit\'e Louis Pasteur, 7 rue Ren\'e Descartes, 67084 Strasbourg Cedex, France

\noindent  maillot@math.u-strasbg.fr

\medskip

\noindent Departament de Matem\`atiques, Universitat Aut\`onoma de Barcelona, E-08193 Bellaterra, Spain

\noindent  porti@mat.uab.es

\end{document}